\documentclass{article}

\usepackage{bbm}
\usepackage{arydshln}
\usepackage{subcaption}
\usepackage{xcolor}
\usepackage{ifdraft}
\usepackage{scalerel}
\usepackage{mathrsfs}  
\usepackage{etoc}
\usepackage{comment}
\usepackage{titletoc}
\usepackage{soul}

\usepackage{latexsym,amsfonts}
\usepackage[utf8]{inputenc}
\usepackage[final=true,protrusion=true,expansion=true]{microtype}
\usepackage[noadjust]{cite}
\usepackage{enumitem}
\usepackage{color}
\usepackage{mathtools}

\usepackage{booktabs}
\usepackage{multirow}

\usepackage{amssymb,amsthm}
\usepackage{amsmath}

\usepackage{amsmath, amssymb}
\usepackage{tikz}
\usetikzlibrary{angles, quotes}
\usepackage{pgffor}

\newtheorem{theorem}{Theorem}
\newtheorem{assumption}{Assumption}
\newtheorem{lemma}[theorem]{Lemma} 
\newtheorem{proposition}[theorem]{Proposition} 
\newtheorem{corollary}[theorem]{Corollary}
\newtheorem{conjecture}{Conjecture}
\theoremstyle{definition}

\newtheorem{example}[theorem]{Example}

 \usepackage[preprint]{neurips_2026}

\usepackage[utf8]{inputenc} 
\usepackage[T1]{fontenc}    
\usepackage{hyperref}       
\usepackage{url}            
\usepackage{booktabs}       
\usepackage{amsfonts}       
\usepackage{nicefrac}       
\usepackage{microtype}      
\usepackage{xcolor}         

\usepackage{algorithm}
\usepackage{algpseudocode}
\usepackage{soul}
\usepackage{tcolorbox}

\newcommand{\R}{\mathbb{R}}

\newcommand{\ip}[2]{\langle #1, #2 \rangle}
\newcommand{\proj}[1]{\mathrm{P}_{#1}}
\newcommand{\PE}{\mathrm{P}_E}

\newcommand*{\N}[1]{\left\|{#1}\right\|} 
\providecommand*{\Nbig}[1]{\big\|{#1}\big\|} 

\newcommand{\eps}{\varepsilon}

\newcommand{\de}{\,\mathrm{d}}
\newcommand{\sphere}{{\mathbb{S}^{d-1}}}
\DeclareMathOperator{\divtmp}{div}
\renewcommand{\div}{\divtmp}
\newcommand{\restr}{\mathbin{\vrule height 1.6ex depth 0pt width
0.13ex\vrule height 0.13ex depth 0pt width 1.3ex}}
\newcommand{\dummy}{\mathord{\color{black!33}\bullet}}

\renewcommand{\restr}{\mathbin{\vrule height 1.6ex depth 0pt width 0.13ex\vrule height 0.13ex depth 0pt width 0.6ex}}

\definecolor{myred}{HTML}{FF0000}
\definecolor{myorange}{HTML}{EB6B03}

\title{
Quantifying Concentration Phenomena of Mean-Field Transformers in the Low-Temperature Regime
}

\author{%
  Albert Alcalde\\
  Department of Mathematics\\
  Friedrich-Alexander-University\\  
  Erlangen-Nürnberg\\
  \texttt{albert.alcalde@fau.de}\\
  \hphantom{\texttt{konstantin.riedl@maths.ox.ac.uk}}
  \And 
  Leon Bungert\\
  Institute of Mathematics\\
  CAIDAS\\
  University of Würzburg\\
  \texttt{leon.bungert@uni-wuerzburg.de}\\
  \hphantom{\texttt{konstantin.riedl@maths.ox.ac.uk}}
  \And 
  Konstantin Riedl\\
  Mathematical Institute\\
  University of Oxford\\
  Reuben College\\
  \texttt{konstantin.riedl@maths.ox.ac.uk}\\
  \hphantom{\texttt{konstantin.riedl@maths.ox.ac.uk}}
  \And
  Tim Roith\\
  CIT School\\
  Technical University of Munich\\
  Munich Center for Machine Learning\\
  \texttt{tim.roith@tum.de}\\
  \hphantom{\texttt{konstantin.riedl@maths.ox.ac.uk}}
}

\begin{document}

\maketitle


\begin{abstract}
    Transformers with self-attention modules as their core components have become an integral architecture in modern large language and foundation models. In this paper, we study the evolution of tokens in deep encoder-only transformers at inference time which is described in the large-token limit by a mean-field continuity equation. Leveraging ideas from the convergence analysis of interacting multi-particle systems, with particles corresponding to tokens, we prove that the token distribution rapidly concentrates onto the push-forward of the initial distribution under a projection map induced by the key, query, and value matrices, and remains metastable for moderate times. Specifically, we show that the Wasserstein distance of the two distributions scales like \mbox{$\sqrt{\nicefrac{\log(\beta+1)}{\beta}}\exp(C t)+\exp(-ct)$} in terms of the temperature parameter $\beta^{-1}\to 0$ and inference time $t\geq 0$. For the proof, we establish Lyapunov-type estimates for the zero-temperature equation, identify its limit as $t\to\infty$, and employ a stability estimate in Wasserstein space together with a quantitative Laplace principle to couple the two equations. Our result implies that for time scales of order $\log\beta$ the token distribution concentrates at the identified limiting distribution. Numerical experiments confirm this and, beyond that, complement our theory by showing that for finite $\beta$ and large $t$ the dynamics enter a different terminal phase, dominated by the spectrum of the value matrix.
\end{abstract}

\section{Introduction}

The transformer architecture has driven in recent years the unprecedented successes of modern machine learning,
establishing itself as the fundamental backbone for large language models~\cite{bert,gemini2023,gpt,llama}, and foundation models~\cite{rishi2024foundation}.
Central to its design is the self-attention mechanism~\cite{bahdanau2014neural,vaswani2017attention},
which enables the model to capture intricate, long-range contextual dependencies within the input data.
A rigorous characterization of how these internal representations evolve through the network, however, remains a significant analytical challenge.

To mathematically study how tokens~$\{x_i\}_{i=1}^n\subset\mathbb{R}^{d}$ evolve through the self-attention modules of a deep encoder-only transformer,
we follow \cite{geshkovski2023mathematical,sander2022sinkformers,bruno2025multiscale,burger2025analysis} and model the forward pass of a transformer with single-head attention and without fully-connected MLPs, in the infinite-depth limit, as a continuous-time dynamical interacting particle system of the form
\begin{equation}
    \label{eq:SAdynamics}
\begin{aligned}
        \dot{x}_i(t) &= \proj{x_i(t)} \left( \sum_{j = 1}^n \frac{\exp\left(\beta\ip{Q(t)x_i(t)}{K(t) x_j(t)}\right)} {\sum_{k = 1}^n \exp\left(\beta \ip{Q(t)x_i(t)}{K(t) x_k(t)}\right)} V(t) x_j(t) \right)\!.
\end{aligned}
\end{equation}
System~\eqref{eq:SAdynamics} governs the evolution of tokens $\{x_i\}_{i=1}^n$, interpreted as particles,  through the self-attention layers of the transformer at inference.
Here, $Q(t),K(t)\in\mathbb{R}^{d_{\text{int}}\times d}$ and $V(t)\in\mathbb{R}^{d\times d}$ denote the \emph{query}, \emph{key}, and \emph{value} weight matrices of the attention modules, while $\beta > 0$ is an inverse-temperature parameter \cite{geshkovski2023mathematical,chen2025critical,bruno2025multiscale}. 
The weight matrices have been obtained during training and are fixed for the inference dynamics, which are the focus of the present work. 
Here, $\proj{x}(y)\coloneqq y-\left\langle x,y\right\rangle x$ for $y\in\R^d$ is the orthogonal projection onto the tangent space~$\mathrm{T}_x\sphere$ of the sphere~$\sphere$ at a point $x\in\sphere$,
and models layer normalization. This is motivated by the simplified root mean square normalization $x\mapsto g\odot \nicefrac{x}{\N{x}}$ with gain $g\in\R^d$ \cite{zhang2019root}, which is used, e.g., in Llama models~\cite{touvron2023llama}.
We set $g=1$ and refer to \cite{karagodin2025normalization} and \cite[Section 1(b)]{burger2025analysis} for a recent review of modeling choices for normalizations.

\paragraph{Mathematical setting.}
Let us now introduce the large-token regime of \eqref{eq:SAdynamics}, which serves as the analytical framework for this paper. As done in the largest part of the current literature \cite{geshkovski2023mathematical,sander2022sinkformers,bruno2025multiscale,burger2025analysis} and justified by memory-efficient variants of transformers such as Universal Transformers \cite{dehghani2018universal} and ALBERT~\cite{lan2019albert}, we assume in our analysis
that the weight matrices $Q$, $K$, and $V$ are identical throughout the layers, i.e., that $Q(t)\equiv Q$, $K(t)\equiv K$, and $V(t)\equiv V$ for all $t\geq0$. For notational simplicity and without loss of generality, we henceforth write $B \coloneqq Q^\top K\in\R^{d\times d}$.

After positional encoding~\cite[Section~3.5]{vaswani2017attention}, the transformer dynamics \eqref{eq:SAdynamics} are invariant under permutations allowing the empirical measure $\rho^{\beta,n}_t = \frac1 n\sum_{i=1}^n \delta_{x_i(t)}$ to fully characterize the dynamics, which is described by a continuity equation of the form~\eqref{eq:SAdynamics_meanfield} below. Our theoretical analysis will take place in the infinite-token regime, where the temporal dynamics of the token distribution~$(\rho_t^\beta)_{t\geq 0}\subset\mathcal{P}(\sphere)$ with $\rho_0^\beta = \rho_0\in\mathcal{P}(\sphere)$ is captured by the \emph{transformer continuity equation}
\begin{equation}
    \label{eq:SAdynamics_meanfield}
    \partial_t \rho_t^\beta
    = - \div\left( \rho_t^\beta \,\proj{x} \left(\int_{\sphere} \frac{\exp\left(\beta \ip{x}{By}\right)}{\int_{\sphere} \exp\left(\beta \ip{x}{Bz}\right)\!\de\rho_t^\beta(z)} V y
    \de
    \rho_t^\beta(y) \right) \right)\!,
\end{equation}
which describes a flow in the space $\mathcal{P}(\sphere)$ of probability measures over the sphere $\sphere$. Here, $\div$ and $\nabla$ denote the manifold divergence and gradient on the sphere $\sphere$, respectively. It is well-known that the empirical measure $\rho_t^{\beta,n}$ defined above is a weak solution to \eqref{eq:SAdynamics_meanfield}, however, this equation allows for more complex solutions if the initial condition is not supported on a finite set.

As shown in the seminal work \cite{bruno2025multiscale}
under invertibility assumptions on $B$ and $V$, and Assumption~\ref{asm:lower_bound} on the initial datum~$\rho_0$,
solutions of \eqref{eq:SAdynamics_meanfield} converge in the zero-temperature limit $\beta\to\infty$ to those of 
\begin{equation}
    \label{eq:zerotemp_meanfield}
    \partial_t\rho_t=-\div\left(\rho_t\,\proj{x}\left(\frac{VB^\top x}{\N{B^\top x}}\right)\right)\!.
\end{equation}
Moreover, for large times,
solutions of \eqref{eq:zerotemp_meanfield} concentrate on the dominant eigenspace of the matrix~$VB^\top$, which we denote by $E$. Correspondingly, for such a linear subspace~$E$ of $\R^d$,
we denote by \mbox{$\PE:\R^d\to E$} the orthogonal projection onto $E$
and define $\Pi:\sphere\setminus E^\perp\to\sphere$ as
\begin{equation}
    \label{eq:Pi}
    \Pi(x)
    \coloneqq \frac{\PE x}{\N{\PE x}}.
\end{equation}

\paragraph{Main Contributions.}
In this work, we study the evolution of tokens in deep encoder-only transformers at inference time by analyzing the mean-field token distribution~$\rho_t^\beta$ of the transformer continuity equation \eqref{eq:SAdynamics_meanfield}.
We quantify its concentration at the push-forward $\Pi_\sharp\rho_0$ of the initial datum under the projection map~\eqref{eq:Pi}, which is determined by the model weights, in terms of the temperature parameter $\beta^{-1}$ and inference time $t$. Our proof leverages tools from the analysis of interacting particle systems, in particular, stability estimates in Wasserstein space involving a quantitative Laplace principle, and Lyapunov techniques to quantify convergence speeds.
Our main contributions are as follows.
\begin{itemize}[label=\large\textbullet, labelsep=8pt,leftmargin=25pt,topsep=-4pt,itemsep=-1pt]
    \item Identification of $\Pi_\sharp\rho_0$ as the stationary distribution of the zero-temperature continuity equation~\eqref{eq:zerotemp_meanfield} for symmetric matrices~$VB^\top$,
    and proof of exponential convergence of the zero-temperature solution~$\rho_t$ to the dominant eigenspace of $VB^\top$ in the Wasserstein distance.
    \item Derivation of a quantitative Wasserstein stability estimate between solutions $\rho_t^\beta$ of the transformer continuity equation \eqref{eq:SAdynamics_meanfield} and the zero-temperature limit~$\rho_t$.
    \item Quantification of the approximation error between $\rho_t^\beta$ and $\Pi_\sharp\rho_0$ in Wasserstein distance as $\sqrt{\nicefrac{\log(\beta+1)}{\beta}}\exp(C t)+\exp(-c t)$, indicating an interplay between temperature~$\beta^{-1}$ and time~$t$.
    \item Characterization of the non-asymptotic concentration regime of the mean-field transformer dynamics up to times $t=\mathcal{O}(\log\beta)$, corresponding to the early-phase behavior most relevant for practical finite-depth models.
    \item Experimental identification of the terminal phase after times $t=\mathcal{O}(\log\beta)$, where concentration shifts
    from the dominant eigenspace of $VB^\top$ to the one of $V$.
\end{itemize}

\paragraph{Organization.} 
Following the presentation and discussion of our theoretical contributions in Section~\ref{sec:mainresults},
Section~\ref{sec:relatedworks} puts these findings into context within the current literature.
Thereafter, in Section~\ref{sec:proof:main}, we present proof details for our statements, with technical results deferred to Appendices~\ref{app:proofs} and \ref{app:conjecture}.
Section~\ref{sec:numericalexperiments} then illustrates and complements our theoretical findings with numerical experiments, with additional experiments provided in Appendix~\ref{app:numericalexperiments}. We conclude the paper with Section~\ref{sec:conclusions}.

\section{Main results}
\label{sec:mainresults}

This section is dedicated to discussing the main theoretical contributions of our paper. We first state the assumptions needed for proving our main results, before elaborating on them in more detail.
\begin{assumption}
    \label{asm:weights}
    The weight matrices~$B$ and $V$ are such that
	\begin{enumerate}[label=A\arabic*,labelsep=10pt,leftmargin=35pt,topsep=-2pt,itemsep=-1pt]
        \item the matrix $B=Q^\top K$ is invertible, \label{asm:Binvertible}
        \item the matrix $VB^\top\in\R^{d\times d}$ is symmetric, with largest eigenvalue $\mu_1$ having multiplicity $k$ and second largest eigenvalue $\mu_2$ satisfying $\gamma \coloneqq \mu_1 - \mu_2 > 0$ (with the convention that $\gamma = \infty$ if $k = d$). We denote by $E$ the eigenspace of $VB^\top$ with respect to $\mu_1$ and by $\PE:\R^d\to E$ the orthogonal projection onto $E$. \label{asm:VBTsymmetric}
    \end{enumerate}
\end{assumption}
\begin{assumption}
    \label{asm:Vp_0}
    The initial datum~$\rho_0\in\mathcal{P}(\sphere)$ is such that $\mathcal{V}_p(\rho_0) \coloneqq \int_\sphere \! \frac{\N{(\mathrm{Id}-\PE)x}^{2p}}{\N{\PE x}^{2p}}\de\rho_0(x)$ is finite for some $p\in (0,1]$.
\end{assumption}
\begin{assumption}
    \label{asm:lower_bound}
    The initial datum~$\rho_0\in\mathcal{P}(\sphere)$ has a Lipschitz continuous density $f_0$ with respect to the surface measure $\mathcal{H}^{d-1}\restr\sphere$ that satisfies $\ell_0\coloneqq\min_{x\in\sphere}f_0(x)>0$.
\end{assumption}
Under Assumptions~\ref{asm:weights}--\ref{asm:lower_bound} on the weight matrices~$B$ and $V$, and the initial datum~$\rho_0$,
we derive in the subsequent statement quantitative concentration bounds for the transformer continuity equation~\eqref{eq:SAdynamics_meanfield}. 
We denote the smallest and largest singular values of a matrix by $\sigma_{\min}(\dummy)$ and $\sigma_{\max}(\dummy)$.
\begin{theorem}
    \label{thm:main}
    Assume that the matrices~$B$ and $V$
    satisfy Assumption~\ref{asm:weights} and that the initial distribution $\rho_0\in\mathcal{P}(\sphere)$ satisfies Assumption~\ref{asm:Vp_0} for some $p\in (0, 1]$ as well as Assumption~\ref{asm:lower_bound}.
    Let $(\rho_t^\beta)_{t\in[0,T]}$ denote the weak solution to the transformer continuity equation~\eqref{eq:SAdynamics_meanfield}.
    Then it holds
    \begin{equation*}        
        W_2(\rho_t^\beta,\Pi_\sharp\rho_0)
        \leq 
        2 \sqrt{\tfrac{\log(\beta+1)}{\beta}} \left(e^{C_1t}-e^{C_0t}\right)
        + \mathcal{V}_p(\rho_0)\exp{\left(-\tfrac{p\gamma}{\sigma_{\max}(B)} t\right)},
    \end{equation*}
    for constants $C_1\!=\!C_1(d,\sigma_{\max}(B),\sigma_{\max}(V))>C_0\!=\!C_0(d,\sigma_{\max}(B),\sigma_{\max}(V),\sigma_{\min}(B),\ell_0)>0$.
\end{theorem}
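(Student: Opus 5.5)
The plan is to split the error with the triangle inequality through the zero-temperature solution $\rho_t$ of \eqref{eq:zerotemp_meanfield} started from the same datum $\rho_0$:
\[
W_2(\rho_t^\beta,\Pi_\sharp\rho_0)\le W_2(\rho_t^\beta,\rho_t)+W_2(\rho_t,\Pi_\sharp\rho_0).
\]
I then bound the two terms separately. The first should give the $\sqrt{\log(\beta+1)/\beta}\,(e^{C_1t}-e^{C_0t})$ contribution. The second should give the exponentially decaying contribution $\mathcal{V}_p(\rho_0)e^{-p\gamma t/\sigma_{\max}(B)}$.

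\textbf{The zero-temperature term.} Since the velocity field $v(x)=\proj{x}(VB^\top x/\N{B^\top x})$ in \eqref{eq:zerotemp_meanfield} does not depend on $\rho_t$, the equation is linear and $\rho_t=(\Phi_t)_\sharp\rho_0$ for the flow $\Phi_t$ of $v$ on $\sphere$. I would first check that $\Pi\circ\Phi_t=\Pi$. Writing $x=\PE x+(\mathrm{Id}-\PE)x$ and using that $VB^\top$ is symmetric and leaves $E$ and $E^\perp$ invariant, the $E$-component satisfies
\[
\tfrac{d}{dt}\PE x=\frac{\mu_1\PE x}{\N{B^\top x}}-\lambda(t)\PE x,\qquad \lambda(t)=\frac{\ip{x}{VB^\top x}}{\N{B^\top x}}.
\]
So the direction of $\PE x$ is preserved, which gives stationarity of $\Pi_\sharp\rho_0$ and the invariance $\Pi\circ\Phi_t=\Pi$. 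This lets me use the coupling $(\Phi_t,\Pi)_\sharp\rho_0$, so that
\[
W_2^2(\rho_t,\Pi_\sharp\rho_0)\le\int\N{\Phi_t(x)-\Pi(\Phi_t(x))}^2\de\rho_0.
\]
On the sphere, $\N{y-\Pi y}^2\le 2\N{(\mathrm{Id}-\PE)y}^2\le 2\,r(y)$ with $r(y)=\N{(\mathrm{Id}-\PE)y}^2/\N{\PE y}^2$. The Lyapunov computation is that $\tfrac{d}{dt}r(\Phi_t x)\le -\tfrac{2\gamma}{\N{B^\top x}}\,r\le-\tfrac{2\gamma}{\sigma_{\max}(B)}\,r$, because the $E^\perp$ components grow at rate at most $\mu_2/\N{B^\top x}$. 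Hence $r^p$ decays like $e^{-2p\gamma t/\sigma_{\max}(B)}$. Combining this with $\N{y-\Pi y}\le\min(2,\sqrt{2r})$ and $\min(a,b)^2\le a^{2-2p}b^{2p}$ should give a bound of the form $\mathcal{V}_p(\rho_0)e^{-p\gamma t/\sigma_{\max}(B)}$, after adjusting constants. Since the statement has $\mathcal{V}_p$ rather than its square root, some care is needed with exponents here; bounding $W_2\le W_2^2$-type quantities on the bounded sphere, or using $W_1$-style estimates, may be needed.

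\textbf{The stability term $W_2(\rho_t^\beta,\rho_t)$.} I would couple the two flows synchronously: take $X_0\sim\rho_0$, let $X_t^\beta$ follow the $\rho^\beta$-velocity field and $X_t=\Phi_t(X_0)$. Differentiating $\mathbb{E}\N{X^\beta_t-X_t}^2$ yields two pieces. The first is a Lipschitz term for the zero-temperature field, with Lipschitz constant depending on $\sigma_{\max}(V)$, $\sigma_{\max}(B)$, $\sigma_{\min}(B)$. The second is a consistency error
\[
\sup_x\Big|\int \mathrm{softmax}_\beta(x,y)\,Vy\,\de\rho_t^\beta(y)-\frac{VB^\top x}{\N{B^\top x}}\Big|.
\]
The consistency error is controlled by a quantitative Laplace principle. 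Since $\rho_t^\beta$ has a density bounded below, the Gibbs measure concentrates near the maximizer $y^*=B^\top x/\N{B^\top x}$ of $\ip{x}{By}$ on the sphere. This gives an error of order $\sqrt{\log(\beta+1)/\beta}$ times a factor depending on the lower density bound of $\rho_t^\beta$.

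That lower bound is propagated from Assumption~\ref{asm:lower_bound} along the characteristics: the density satisfies $\ell_t\ge\ell_0e^{-ct}$, since the divergence of the field is bounded by $C(d,\sigma_{\max}(B),\sigma_{\max}(V))$. This degradation of the Laplace constant in time is what produces the difference $e^{C_1t}-e^{C_0t}$ after a Grönwall integration $\int_0^te^{C_1(t-s)}e^{(C_1-C_0)s}\,ds$-type computation.

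I expect the main obstacle to be this quantitative Laplace principle, uniform in $x$ and in time. The issue is the interplay between the lower bound on the density and the $\log\beta$ loss from balancing the mass near the maximizer against the exponential tail. I would first prove it for measures with density $\ge\ell$: choose a cap of radius $\delta\sim\sqrt{\log\beta/\beta}$, lower bound the normalizing constant by the cap mass $\gtrsim\ell\,\delta^{d-1}e^{\beta\max}$, and bound the contribution outside the cap.
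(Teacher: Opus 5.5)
Your plan has the same architecture as the paper's proof. It passes through the zero-temperature solution $\rho_t$ via the triangle inequality. It bounds $W_2(\rho_t^\beta,\rho_t)$ by a Laplace-principle/Gr\"onwall stability estimate driven by a lower bound on the density of $\rho_t^\beta$. For the zero-temperature term it combines $R_p$ with $\Pi_\sharp\rho_t=\Pi_\sharp\rho_0$ and $\N{y-\Pi(y)}^2\le 2R_p(y)$.

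The difference is that you argue along characteristics, whereas the paper works in weak (Eulerian) form. Your synchronous coupling yields the same differential inequality as the paper's differentiation of $W_2^2$ along optimal plans. For the zero-temperature part, $\rho_t=(\Phi_t)_\sharp\rho_0$ is legitimate because $v_\infty$ is Lipschitz when $B$ is invertible. It gives $\Pi\circ\Phi_t=\Pi$ and $r(\Phi_t x)\le r(x)e^{-2\gamma t/\sigma_{\max}(B)}$ pointwise. This replaces the truncation $R_{p,K}$ with monotone convergence in Proposition~\ref{prop:Lyapunov_zerotemp_p}. It also replaces the cut-off argument of Lemma~\ref{lem:Pi_dist}, which needs $\mathcal V_p(\rho_t)<\infty$ to control mass near $E^\perp$; yours needs only $\rho_0(E^\perp)=0$. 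That is a genuine simplification.

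\textbf{The square root.} Your proposal goes wrong in trying to reach $\mathcal V_p(\rho_0)$ rather than its square root. Your argument gives $W_2(\rho_t,\Pi_\sharp\rho_0)\le\sqrt{2\mathcal V_p(\rho_0)}\,e^{-p\gamma t/\sigma_{\max}(B)}$, using $\N{y-\Pi(y)}^2=2(1-\N{\PE y})\le 2$ for the constant. This is exactly what the paper's own proof produces in \eqref{eq:proof:main:B}; the displayed statement is inconsistent with it.

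Neither of your repairs works. $W_2\le W_2^2$ fails precisely when $W_2$ is small, and passing through $W_1$ only loses. No repair can exist: for $p>\tfrac12$ the unrooted bound already fails at $t=0$, where the $\beta$-term vanishes. Take $d=4$, $B=\mathrm{Id}$, $V=\operatorname{diag}(2,2,2,1)$. Then $k=3>2p$, so Assumption~\ref{asm:Vp_0} holds with $p=1$ for the uniform measure $\mathcal U$. Let $\rho_0=(1-\eta)\nu+\eta\,\mathcal U$, where $\nu$ is a Lipschitz bump of radius $s$ around $(\sqrt{1-\eps^2},0,0,\eps)$. Testing with the $1$-Lipschitz function $x\mapsto x_4$, which vanishes on $E$, gives $W_2(\rho_0,\Pi_\sharp\rho_0)\ge(1-\eta)(\eps-s)$. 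On the other hand $\mathcal V_1(\rho_0)\lesssim\eps^2+\eta$. Choosing $s,\eta\ll\eps\ll1$ violates the unrooted bound. So you should conclude with $\sqrt{2\mathcal V_p(\rho_0)}$.

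\textbf{The density bound.} A smaller gap: $\ell_t\ge\ell_0e^{-ct}$ does not follow from a $\beta$-uniform divergence bound as you assert. Let $\mathcal G_x\propto e^{\beta\ip{x}{By}}\de\varrho(y)$ be the Gibbs measure. Then $D_xm_{\beta,\varrho}(x)=\beta\,\mathrm{Cov}_{\mathcal G_x}B^\top$. Hence the divergence of $x\mapsto\proj{x}(Vm_{\beta,\rho_t^\beta}(x))$ contains $\beta\operatorname{tr}(\proj{x}V\mathrm{Cov}_{\mathcal G_x}B^\top)$. This is $O(1)$ only if the Gibbs covariance is $O(1/\beta)$, which requires upper bounds and regularity of the density to be propagated alongside.

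The paper does not derive this either; it imports it as Lemma~\ref{lem:density} from \cite{bruno2025multiscale}. Either cite that, or close a bootstrap. Your Laplace splitting gives $\beta\operatorname{tr}\mathrm{Cov}_{\mathcal G_x}\lesssim 1+\log\beta+|\log\ell_t|$. Gr\"onwall on $\log(1/\ell_t)$ then still yields a consistency error of order $e^{Ct}\sqrt{\log\beta/\beta}$ for large $\beta$, with $\ell_0$-dependent constants.

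Finally, the Gr\"onwall integral should be $\int_0^t e^{L(t-s)}e^{C_0s}\de s=(e^{Lt}-e^{C_0t})/(L-C_0)$. The prefactor $2$ comes from inflating the rate to $L=2C_0+C_1$, as the paper does.
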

\begin{figure}[h!]
\def\valbeta{30}
\centering%
\begin{subfigure}{.28\textwidth}%
\includegraphics[width=\linewidth, trim={2cm 2cm 2cm 2cm}, clip]{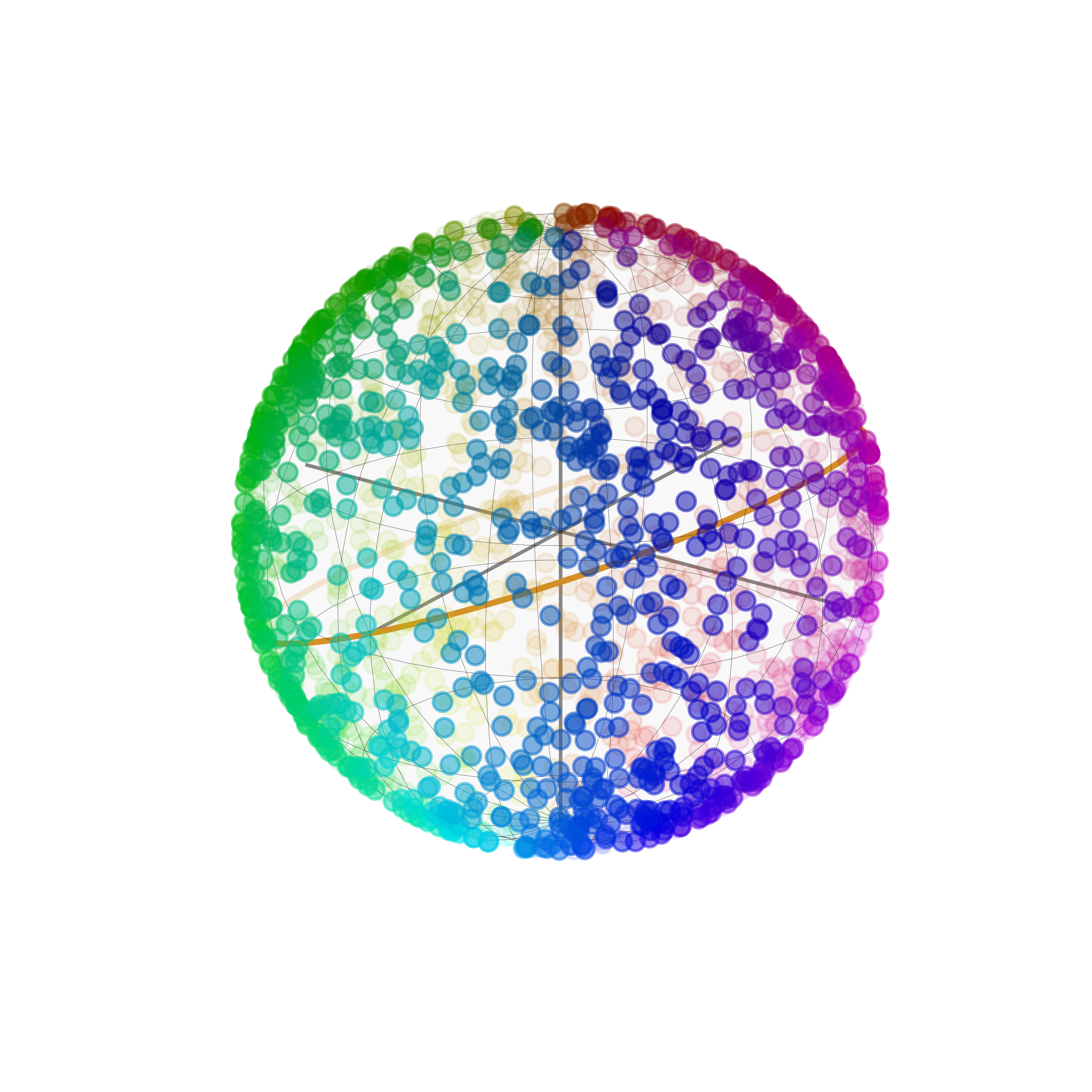}%
\caption{$t=0$}
\end{subfigure}%
\hspace{2.5em}%
\begin{subfigure}{.28\textwidth}%
\includegraphics[width=\linewidth, trim={2cm 2cm 2cm 2cm}, clip]{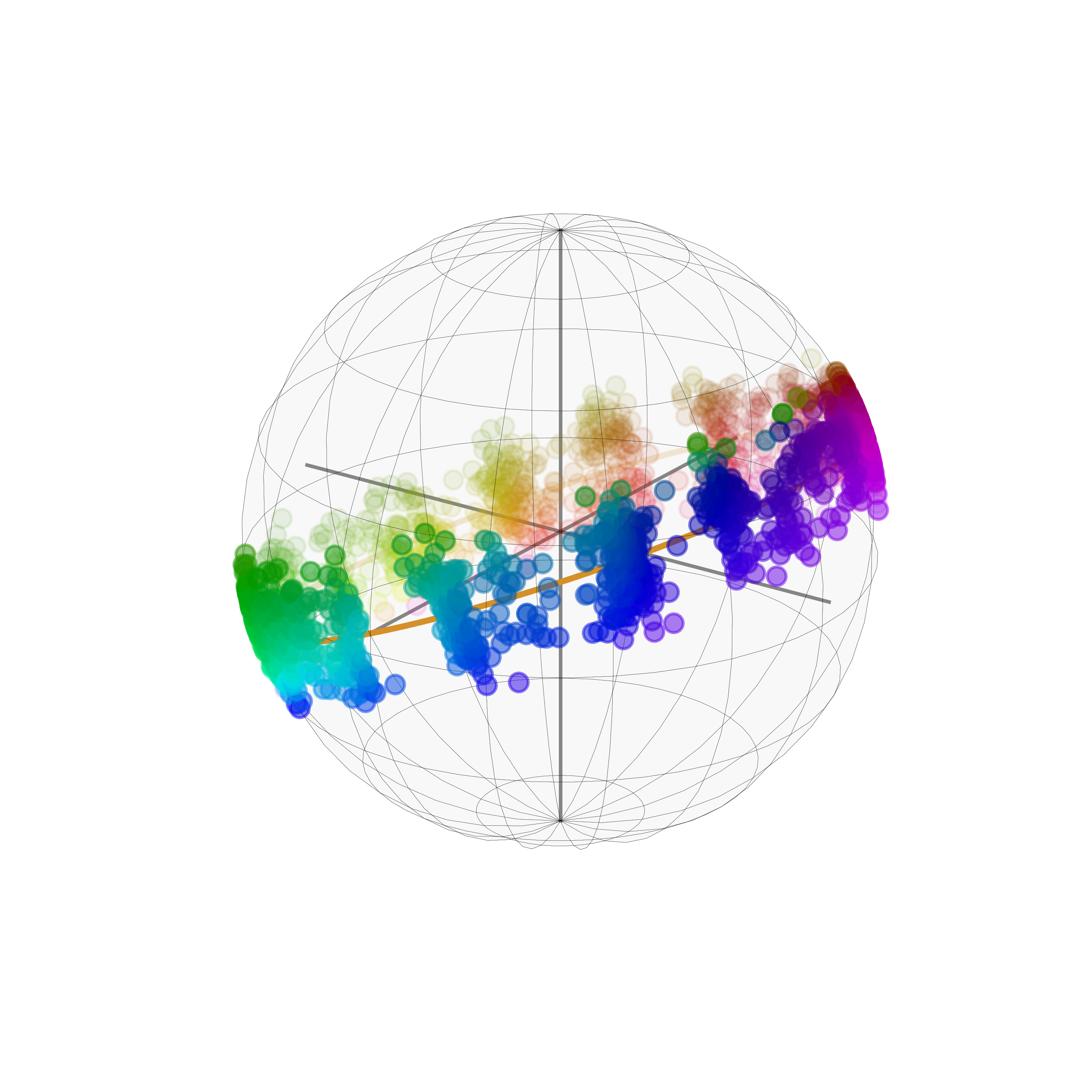}%
\caption{$t=2.5$}
\end{subfigure}%
\hspace{2.5em}%
\begin{subfigure}{.28\textwidth}%
\includegraphics[width=\linewidth, trim={2cm 2cm 2cm 2cm}, clip]{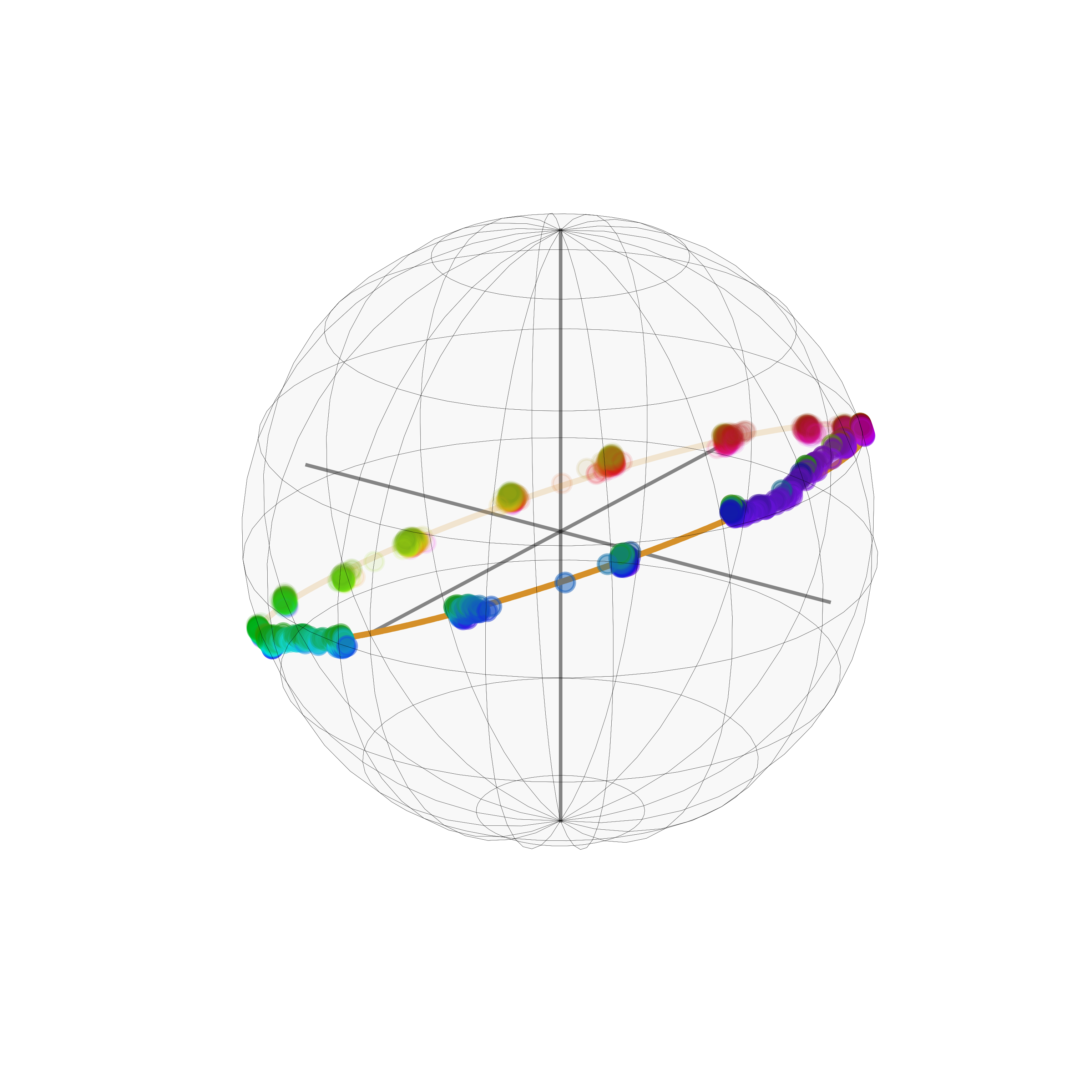}%
\caption{$t=5$}%
\end{subfigure}\\%
\caption{Illustration of Theorem~\ref{thm:main}.
We observe concentration of $\rho_t^{\beta,n}$ at $E\cap \sphere$ ({\color{myorange}orange circle}) when running \eqref{eq:SAdynamics} with $n=2,000$ tokens to simulate \eqref{eq:SAdynamics_meanfield} for $B=R^\top\!\operatorname{diag}(5,5,1)R$ and $V=\mathrm{Id}$, where $R$ is a rotation by $\pi/8$. We use $\beta=\valbeta$ and a von Mises--Fisher mixture as initial density $\rho_0$.}
\label{fig:thm1}
\end{figure}
Before discussing Theorem~\ref{thm:main} and its assumptions in the remainder of this section, let us provide as a corollary a quantitative convergence estimate for the zero-temperature transformer continuity equation~\eqref{eq:zerotemp_meanfield}. It follows directly from the proof of Theorem~\ref{thm:main} (see equation~\eqref{eq:proof:main:B}) in Section~\ref{sec:proof:main}.

\begin{corollary}\label{cor:exponential_convergence}
    Assume that the matrices~$B$ and $V$ satisfy Assumption~\ref{asm:weights}. Moreover, let the initial distribution $\rho_0$ be such that Assumption~\ref{asm:Vp_0} holds for some $p\in (0, 1]$. Let $(\rho_t)_{t\in[0,T]}$ denote the weak solution to the zero-temperature transformer continuity equation~\eqref{eq:zerotemp_meanfield}.
    Then it holds
    \begin{equation*}
        W_2(\rho_t,\Pi_\sharp\rho_0)
        \leq 
        \mathcal{V}_p(\rho_0)\exp{\left(-\tfrac{p\gamma}{\sigma_{\max}(B)} t\right)}.
    \end{equation*}
\end{corollary}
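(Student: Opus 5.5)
The plan is to work at the level of characteristics. The zero-temperature equation~\eqref{eq:zerotemp_meanfield} has no interaction: its velocity field $v(x)=\proj{x}\bigl(VB^\top x/\N{B^\top x}\bigr)$ depends only on $x$. Hence $\rho_t=(X_t)_\sharp\rho_0$, where $X_t$ is the flow of the ODE $\dot x=v(x)$ on $\sphere$; this field is Lipschitz on the sphere because $B$ is invertible. Since $(X_t,\Pi)_\sharp\rho_0$ is a coupling of $\rho_t$ and $\Pi_\sharp\rho_0$, we get
\[
W_2^2(\rho_t,\Pi_\sharp\rho_0)\le\int_\sphere\N{X_t(x)-\Pi(x)}^2\de\rho_0(x).
\]
The proof therefore reduces to two pointwise facts: (i) $\Pi(X_t(x))=\Pi(x)$ for all $t$, and (ii) a Lyapunov decay estimate for the ratio $\N{(\mathrm{Id}-\PE)X_t}/\N{\PE X_t}$.

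For (i), write $x=X_t(x_0)$, $M\coloneqq VB^\top$ (symmetric by~\ref{asm:VBTsymmetric}) and $s(x)\coloneqq\N{B^\top x}^{-1}$. Then
\[
\dot x=s(x)\bigl(Mx-\ip{x}{Mx}x\bigr).
\]
Because $E$ is $M$-invariant and $\PE$ commutes with $M$, and $M\PE x=\mu_1\PE x$, we obtain
\[
\tfrac{d}{dt}\PE x=s(x)\bigl(\mu_1-\ip{x}{Mx}\bigr)\PE x.
\]
So $\PE x$ only rescales along its own direction, which gives $\Pi(X_t(x_0))=\Pi(x_0)$. In particular $\PE X_t(x_0)\neq0$ whenever $\PE x_0\neq0$, and Assumption~\ref{asm:Vp_0} forces $\rho_0(E^\perp)=0$.

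For (ii), set $y=(\mathrm{Id}-\PE)x$. Then $\dot y=s(x)\bigl(My-\ip{x}{Mx}y\bigr)$, and on $E^\perp$ the symmetric matrix $M$ satisfies $\ip{y}{My}\le\mu_2\N{y}^2$. For the ratio $r\coloneqq\N{y}^2/\N{\PE x}^2$ this yields
\[
\dot r\le 2s(x)(\mu_2-\mu_1)\,r\le-\tfrac{2\gamma}{\sigma_{\max}(B)}\,r,
\]
using $s(x)\ge1/\sigma_{\max}(B)$ on the sphere. Hence $r(t)^p\le r(0)^p e^{-2p\gamma t/\sigma_{\max}(B)}$. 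Integrating against $\rho_0$ gives the Lyapunov estimate $\int r^p\de\rho_0\le\mathcal{V}_p(\rho_0)e^{-2p\gamma t/\sigma_{\max}(B)}$.

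Finally we need to compare $\N{x-\Pi(x)}$ with $r$. Write $x=\PE x+y$ with $a\coloneqq\N{\PE x}$, $b\coloneqq\N{y}$ and $a^2+b^2=1$. Then
\[
\N{x-\Pi(x)}^2=(1-a)^2+b^2=2(1-a)\le 2b^2\le2\min(1,r).
\]
Since $p\le1$, we have $\min(1,r)\le r^p$, so $\N{x-\Pi(x)}^2\le2r^p$. Combining this with the Lyapunov estimate gives
\[
W_2(\rho_t,\Pi_\sharp\rho_0)\le\sqrt{2\mathcal{V}_p(\rho_0)}\,e^{-p\gamma t/\sigma_{\max}(B)}.
\]
This has the stated exponential rate but the prefactor $\sqrt{2\mathcal{V}_p}$ rather than $\mathcal{V}_p$. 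The main obstacle is therefore matching the constant exactly as stated. One option is a sharper pointwise bound, e.g.\ $2(1-a)\le b^2\cdot 2/(1+a)$ combined with a finer use of $r^p$. Alternatively, the constant can be tracked as in the paper's own estimate~\eqref{eq:proof:main:B}, possibly using $W_2\le W_1$-type bounds on the sphere, where $\N{x-\Pi(x)}\le\sqrt2\min(1,\sqrt r)\le\sqrt2\, r^{p/2}$. I would first carry out the argument with an unspecified constant and then adjust the pointwise inequality (or the choice of $p$-power) to recover exactly $\mathcal{V}_p(\rho_0)$. Everything else is mechanical once (i) and (ii) are in place.
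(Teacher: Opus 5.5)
Your argument is correct, but what it proves is
\[
W_2(\rho_t,\Pi_\sharp\rho_0)\le\sqrt{2\mathcal{V}_p(\rho_0)}\,\exp\left(-\tfrac{p\gamma}{\sigma_{\max}(B)}t\right).
\]
This is exactly what the paper proves as well. The corollary is read off from \eqref{eq:proof:main:B}, whose right-hand side has the prefactor $\sqrt{2\mathcal{V}_p(\rho_0)}$. The prefactor $\mathcal{V}_p(\rho_0)$ in the displayed statement, and likewise in Theorem~\ref{thm:main}, is a misprint.

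The ``obstacle'' in your last paragraph therefore cannot be removed, because no bound linear in $\mathcal{V}_p(\rho_0)$ holds. A counterexample already works at $t=0$:
\begin{itemize}
\item Take $d=2$, $B=\mathrm{Id}$ and $V=\operatorname{diag}(1,0)$, so $E=\operatorname{span}\{e_1\}$ and $\gamma=\sigma_{\max}(B)=1$.
\item Take $\rho_0=(1-\eps)\delta_{e_1}+\eps\,\delta_{x_0}$ with $x_0=(\cos\theta,\sin\theta)$ and $\theta\in(0,\pi/2)$.
\item Then $\Pi_\sharp\rho_0=\delta_{e_1}$, $W_2(\rho_0,\Pi_\sharp\rho_0)=2\sqrt{\eps}\sin(\theta/2)$ and $\mathcal{V}_p(\rho_0)=\eps\tan^{2p}\theta$.
\end{itemize}
So the stated inequality fails for every $p\in(0,1]$ once $\eps$ is small. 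The point is that $W_2$ is an $L^2$ quantity and must scale like $\sqrt{\mathcal{V}_p}$. For $p=1$ even $\rho_0=\delta_{x_0}$ with small $\theta$ is a counterexample, since $2\sin(\theta/2)\sim\theta$ while $\tan^2\theta\sim\theta^2$.

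Your suggested repairs are therefore dead ends. The sharpest pointwise comparison is $\N{x-\Pi(x)}^2=\frac{2b^2}{1+a}\le\min\{2,r\}$, which becomes asymptotically an equality as $r\to0$. It gives at best the prefactor $2^{(1-p)/2}\sqrt{\mathcal{V}_p(\rho_0)}$. Also, on probability measures $W_1\le W_2$ holds, not the reverse. You should stop at the $\sqrt{2\mathcal{V}_p(\rho_0)}$ bound and note that the statement's constant must be corrected.

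As for the route, your ingredients are the paper's, but you run them in Lagrangian rather than Eulerian form.
\begin{itemize}
\item The paper tests the weak formulation with the truncations $\min\{R_p,K\}$ and lets $K\to\infty$ by monotone convergence (Proposition~\ref{prop:Lyapunov_zerotemp_p}).
\item It obtains $\Pi_\sharp\rho_t=\Pi_\sharp\rho_0$ in Lemma~\ref{lem:Pi_dist}. This uses cut-off test functions $\eta_\delta(\N{\PE x})\varphi(\Pi(x))$, the identity $D\Pi(x)v_\infty(x)=0$ from Lemma~\ref{lem:jacobian}, and a $\mathcal{V}_p$-bound on the layer $\{\delta\le\N{\PE x}\le2\delta\}$.
\item It then couples via $(\mathrm{Id},\Pi)_\sharp\rho_t$ (Lemma~\ref{lem:W2_to_Pi_p}).
\end{itemize}
You instead use that the zero-temperature velocity is a fixed Lipschitz field and write $\rho_t=(X_t)_\sharp\rho_0$. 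Then the invariance of $\Pi$ (since $\PE X_t(x_0)$ is a positive multiple of $\PE x_0$) and the decay of $r$ both become ODE facts along each trajectory, with no truncation or cut-off. Your coupling $(X_t,\Pi)_\sharp\rho_0$ coincides with the paper's by (i), and your final comparison is Lemma~\ref{lem:W2_to_Pi_p}.

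Your version is shorter and gives a stronger, trajectory-wise statement. The price is that you need the flow representation of weak solutions, i.e.\ uniqueness for the linear continuity equation with Lipschitz velocity. That fact is standard, whereas the paper's Eulerian computation applies to any weak solution directly.
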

To the best of our knowledge, Corollary~\ref{cor:exponential_convergence} is the first result which identifies the stationary distribution of the zero-temperature transformer continuity equation~\eqref{eq:zerotemp_meanfield}. Note, however, that as observed in \cite[Remark 3.6]{bruno2025multiscale} there exist weight matrices $B$ and $V$ (which do not satisfy \ref{asm:VBTsymmetric} in Assumption~\ref{asm:weights}) such that only the collapse onto $E$ but no convergence can be expected. 

\paragraph{Discussion of the assumptions.}
Demanding invertibility of the product $B=Q^\top K$ of the key and query matrices in \ref{asm:Binvertible} of Assumption~\ref{asm:weights} is relatively common in the field, see, e.g., \cite{geshkovski2023emergence,geshkovski2023mathematical,rigollet2025mean,castin2025unified,bruno2025multiscale}. It is especially important when working with the zero-temperature continuity equation \eqref{eq:zerotemp_meanfield} which is not well-defined unless $B$ is invertible. The symmetry assumption on $VB^\top$ from \ref{asm:VBTsymmetric} might appear to be relatively strong. 
It is important to note, however, that without an assumption of this form one cannot hope to be able to characterize the stationary distribution $\Pi_\sharp\rho_0$ for \eqref{eq:zerotemp_meanfield} as done in Corollary~\ref{cor:exponential_convergence}. Still, the quantitative stability result from Proposition~\ref{prop:difference_continuity_eq}, stating that $W_2(\rho_t^\beta,\rho_t)\leq 2\sqrt{\nicefrac{\log(\beta+1)}{\beta}} (e^{C_1t}-e^{C_0t})$, is true without \ref{asm:VBTsymmetric}. As a consequence, whenever one can analyze the convergence of the zero-temperature solution $\rho_t$ under more relaxed assumptions than \ref{asm:VBTsymmetric}, such results extend to $\rho_t^\beta$ for large $\beta$ by the triangle inequality. See also Appendix~\ref{sec:nonsymmetric} for related numerical experiments.

Moving on to Assumption~\ref{asm:Vp_0}, we note that this assumption implies that $\rho_0(E^\perp\cap\sphere)=0$ since otherwise the denominator would be zero on a set of positive measure.
This is necessary: Since $E^\perp\cap\sphere$ is invariant for \eqref{eq:zerotemp_meanfield} when $VB^\top$ is symmetric, any mass initially supported on $E^\perp$ remains there for all times and cannot flow toward $E$. However, Assumption~\ref{asm:Vp_0} is stronger requiring that not too much mass accumulates close to $E^\perp\cap\sphere$ either. This is necessary to prove Corollary~\ref{cor:exponential_convergence} with Lyapunov techniques.
We suspect that it might be possible to weaken this assumption or even replace it by $\rho_0(E^\perp\cap\sphere)=0$ if one is willing to worsen or sacrifice having a convergence rate altogether.   

Assumption~\ref{asm:lower_bound} is only needed for the quantitative Laplace principle from Lemma~\ref{lem:LaplacePrinciple}. Loosely speaking, for token distributions $\rho_t^\beta$ with zero mass on open subsets of the sphere, the vector field in \eqref{eq:SAdynamics_meanfield} is a poor approximation of the one in \eqref{eq:zerotemp_meanfield} even for large $\beta$. The authors of \cite{bruno2025multiscale} also used Assumption~\ref{asm:lower_bound} to show that $\rho_t^\beta$ has a positive density for all finite times $t>0$, and similar assumptions were used in \cite{bruno2024emergence,rigollet2025mean,chen2025quantitative}.
One possible way to remove this assumption is to add noise, leading to noisy transformer models as recently considered in \cite{balasubramanian2025structure,rigollet2025mean,shalova2026solutions}.
This is also exploited in the analysis of the conceptually similar consensus-based optimization model~\cite{fornasier2024consensus}. Note, however, that the exponential convergence of the solution to the zero-temperature continuity equation \eqref{eq:zerotemp_meanfield} from Corollary~\ref{cor:exponential_convergence} holds without Assumption~\ref{asm:lower_bound} and is, in particular, valid for singular initial distributions concentrated on finitely many points as long as they do not lie on the lower-dimensional sphere $E^\perp\cap\sphere$.

Let us remark that at first glance it might seem like Assumptions~\ref{asm:Vp_0} and~\ref{asm:lower_bound} contradict each other. After all, the former prevents mass from accumulating close to $E^\perp\cap\sphere$, while the latter demands a strictly positive density. However, this is no contradiction if the parameter $p$ and the dimension of the eigenspace $E$ are in a correct relationship to each other, as demonstrated by the following example.
\begin{example}
    Since Assumption~\ref{asm:lower_bound} implies that $\rho_0\geq \ell_0\,\mathcal{H}^{d-1}\restr\sphere$, it is necessary for Assumption~\ref{asm:Vp_0} to verify whether $\int_\sphere\!\N{(\mathrm{Id}-\PE)x}^{2p}\N{\PE x}^{-2p}\!\de\mathcal{H}^{d-1}<\infty$.
    In fact, it suffices to check whether $\int_\sphere\!\N{\PE x}^{-2p}\!\de\mathcal{H}^{d-1}(x)<\infty$.
    Note that, for $p >0$, the singular set of the integrand is \mbox{$S\coloneqq E^\perp\cap\sphere$} and equals a $(d-k-1)$-dimensional sphere. In a neighborhood of $S$ the quantity $\N{\PE x}$ scales like the distance of $x$ to $S$. The integral over a tubular $\eps$-neighborhood $S_\eps$ of $S$ in spherical coordinates therefore scales like
    $\int_{S_\eps}\!\N{\PE x}^{-2p}\!\de\mathcal{H}^{d-1}
    \sim \int_0^\eps \frac{1}{r^{2p}} r^{k-1}\de r
    = \int_0^\eps r^{k-1-2p}\de r$, which is finite if and only if $k>2p$. While for $p<\tfrac{1}{2}$ this condition is automatically satisfied, it can be a restriction for larger~$p$. For instance, if $p=1$ it requires the dimension of the dominant eigenspace~$E$ of $VB^\top$ to be at least three. Interestingly, the condition $\dim E\geq 3$ also appeared in \cite[Theorem 3.1]{chen2025quantitative} for analyzing clustering phenomena of mean-field transformer models.
\end{example}

\paragraph{Discussion of the main results.}
Our main result, Theorem~\ref{thm:main}, encapsulates two different effects that are at play for solutions to the transformer continuity equation~\eqref{eq:SAdynamics_meanfield}. The first one is a temperature effect stating that for large values of $\beta$ (corresponding to low temperatures), solutions $\rho_t^\beta$ of \eqref{eq:SAdynamics_meanfield} are close to solutions $\rho_t$ of the zero-temperature continuity equation \eqref{eq:zerotemp_meanfield} in the Wasserstein distance. In fact, Proposition~\ref{prop:difference_continuity_eq} in Section~\ref{sec:proof_main:stability} states that $W_2(\rho_t^\beta,\rho_t)\leq 2\sqrt{\nicefrac{\log(\beta+1)}{\beta}}(e^{C_1t}-e^{C_0t})$, which quantifies \cite[Theorem 3.2]{bruno2025multiscale}. This estimate is not uniform in time due to the exponential factor, and we do not expect it to be either, as demonstrated by our numerical experiments in Section~\ref{sec:numericalexperiments}. The second effect is the exponential convergence of the zero-temperature limit~\eqref{eq:zerotemp_meanfield} to a stationary distribution $\rho_\infty$ which we show to depend explicitly on the initial datum and to have the form $\rho_\infty\coloneqq\Pi_\sharp\rho_0$.
Taking into account the definition of $\Pi$ in \eqref{eq:Pi},
$\rho_\infty$ can be thought of as a projection of $\rho_0$ onto $E\cap\sphere$.

The time scales at which these two effects are present can be derived directly from Theorem~\ref{thm:main} by investigating which of the two terms dominates. Until times of order $\mathcal{O}(\log\beta)$,
the exponential convergence prevails. Thereafter, the approximation error due to positive temperature takes over, see also Figure~\ref{fig:alignment} for a numerical illustration of this behavior.
\begin{equation*}
    W_2(\rho_t^\beta,\Pi_\sharp\rho_0)
    \lesssim 
    \begin{dcases}
        \exp\left(-\tfrac{p\gamma}{\sigma_{\max}(B)}t\right),
    \qquad
    &0\leq t \lesssim \log\beta,\\
    e^{C_1t}\sqrt{\tfrac{\log(\beta+1)}{\beta}},
    \qquad
    &t \gtrsim\log\beta.
    \end{dcases}
\end{equation*}
Hence, the most relevant regime for our result is that of time scales of at most $\mathcal{O}(\log\beta)$. On such scale, the following corollary of Theorem \ref{thm:main} states
that for any given target accuracy~$\eps>0$, there exists $\beta>0$ large enough such that after a time of order $\log\frac1\eps$, the Wasserstein distance of $\rho_t^\beta$ to $\Pi_\sharp\rho_0$ is below $\eps$ in a proper time interval. See Appendix~\ref{sec:proof:cor:exponential_convergence} for the proof of the following result.
\begin{corollary}
    For any $\eps>0$ there exists $\beta\gtrsim\frac{1}{\eps^4}\log\frac{1}{\eps}$ large enough such that it holds $W_2(\rho_t^\beta,\Pi_\sharp\rho_0)\leq\eps$
    for all $t\in[t_1,t_2]$ with $t_1<t_2$, where
    \begin{align*}
        t_1 \coloneqq \tfrac{\sigma_{\max}(B)}{p\gamma}\log\left(\tfrac{2\mathcal{V}_p(\rho_0)}{\eps}\right)
        \quad 
        \text{ and }
        \quad 
        t_2 \coloneqq \tfrac{1}{C_1}\log\left(1+\tfrac{\eps}{4}\sqrt{\tfrac{\beta}{\log(\beta+1)}}\right).
    \end{align*}
\end{corollary}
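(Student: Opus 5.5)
The plan is to split the error bound of Theorem~\ref{thm:main} into its two terms and make each one at most $\eps/2$. Since $e^{C_0t}>0$, we may drop it and use the weaker bound
\begin{equation*}
W_2(\rho_t^\beta,\Pi_\sharp\rho_0)\leq 2\sqrt{\tfrac{\log(\beta+1)}{\beta}}\,e^{C_1t}+\mathcal{V}_p(\rho_0)e^{-\frac{p\gamma}{\sigma_{\max}(B)}t}.
\end{equation*}
The exponential decay term is at most $\eps/2$ exactly when $t\geq t_1$; this is obtained by solving $\mathcal{V}_p(\rho_0)e^{-p\gamma t/\sigma_{\max}(B)}=\eps/2$ for $t$. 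Since this term is decreasing in $t$, the bound holds for every $t\geq t_1$. If $t_1$ is negative, i.e.\ $2\mathcal{V}_p(\rho_0)<\eps$, nothing is needed.

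For the temperature term, it is cleaner to keep the factor $e^{C_1t}-1$, since $e^{C_1t}-e^{C_0t}\leq e^{C_1t}-1$ for $C_0>0$. The requirement $2\sqrt{\log(\beta+1)/\beta}\,(e^{C_1t}-1)\leq\eps/2$ is equivalent to
\begin{equation*}
e^{C_1t}\leq 1+\tfrac{\eps}{4}\sqrt{\beta/\log(\beta+1)},
\end{equation*}
which is precisely $t\leq t_2$. This term is increasing in $t$, so it is controlled on all of $[0,t_2]$. Adding the two estimates gives $W_2(\rho_t^\beta,\Pi_\sharp\rho_0)\leq\eps$ on $[t_1,t_2]$.

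The remaining step, and the only real work, is to show that $t_1<t_2$ once $\beta$ is large, and to quantify how large. The condition $t_1<t_2$ reads
\begin{equation*}
\tfrac{\eps}{4}\sqrt{\tfrac{\beta}{\log(\beta+1)}}>\left(\tfrac{2\mathcal{V}_p(\rho_0)}{\eps}\right)^{C_1\sigma_{\max}(B)/(p\gamma)}-1.
\end{equation*}
The right-hand side is a fixed power $\eps^{-\alpha}$ up to constants, with $\alpha=C_1\sigma_{\max}(B)/(p\gamma)$. The left-hand side tends to infinity in $\beta$, since $\beta/\log(\beta+1)\to\infty$, so a sufficiently large $\beta$ always exists. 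The main obstacle is matching the stated rate $\beta\gtrsim\eps^{-4}\log\frac1\eps$. Inverting $\beta/\log(\beta+1)\gtrsim\eps^{-2}(\eps^{-\alpha}+1)^2$ gives $\beta\gtrsim\eps^{-2-2\alpha}\log\frac1\eps$, using the standard fact that $\beta\geq 2M\log M$ implies $\beta/\log(\beta+1)\geq M$ for large $M$. This reduces to the claimed $\eps^{-4}\log\frac1\eps$ when the exponent $\alpha$ is at most one (for instance after rescaling the constants), and otherwise the $\gtrsim$ absorbs the dependence on the model constants. I would present the existence statement with the explicit inversion and state the $\beta$ threshold with the hidden constants, including this exponent, made explicit.
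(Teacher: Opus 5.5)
Your argument is the paper's proof. You split the bound of Theorem~\ref{thm:main} into two halves of $\eps$ and use $e^{C_1t}-e^{C_0t}\le e^{C_1t}-1$; this, not dropping $e^{C_0t}$ altogether as in your first display, is what yields exactly $t_2$. You then invert each term to get $t\ge t_1$ and $t\le t_2$, and use $\beta/\log(\beta+1)\to\infty$ to get $t_1<t_2$ for large $\beta$. This is complete for the existence statement.

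The step that does not hold up is your reconciliation with the stated rate. Your inversion, $\beta\gtrsim\eps^{-2-2\alpha}\log\frac{1}{\eps}$ with $\alpha=C_1\sigma_{\max}(B)/(p\gamma)$, is correct. It is more careful than the paper, whose one-line claim that $\beta\gtrsim\eps^{-4}\log\frac{1}{\eps}$ suffices amounts to taking $\alpha=1$.

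Your fallback that ``the $\gtrsim$ absorbs the dependence on the model constants'' is false. $\gtrsim$ hides multiplicative constants, not a power of $\eps$, and for $\alpha>1$ one has $\eps^{-2-2\alpha}/\eps^{-4}\to\infty$ as $\eps\to0$. ``Rescaling the constants'' does not help either. $\alpha$ is the ratio of the error growth rate $C_1$ to the decay rate $p\gamma/\sigma_{\max}(B)$, so it is invariant under time rescaling, and $C_1$ cannot be lowered at will.

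In fact, take the constants produced in the proof of Proposition~\ref{prop:difference_continuity_eq}, with $k<d$. There $C_1\ge 2\sigma_{\max}(V)(1+\sigma_{\max}(B)/\sigma_{\min}(B))\ge 4\sigma_{\max}(V)$. Symmetry of $VB^\top$ gives $\gamma\le 2\sigma_{\max}(V)\sigma_{\max}(B)$. Hence $\alpha\ge 2/p\ge 2$, so the case $\alpha\le1$ you hope for never occurs.

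The correct quantitative statement is therefore the one you propose to write down, $\beta\gtrsim\eps^{-2-2\alpha}\log\frac{1}{\eps}$. The corollary's ``$\beta\gtrsim\eps^{-4}\log\frac{1}{\eps}$'' is only valid as a lower bound that the constructed $\beta$ satisfies (true for $\eps<1$ since $\alpha\ge1$), not as a sufficient threshold.
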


\paragraph{Conjectured long-time behavior.}
We close this section by emphasizing that there is no reason to believe that after a time scale of order $\mathcal{O}(\log\beta)$ the token distribution $\rho_t^\beta$ is still close to $\Pi_\sharp\rho_0$.
In fact, we have numerical evidence suggesting convergence to the dominant eigenspace of $V$ (either in magnitude or not, denoted as $F^{\mathrm{abs}}$ and $F$, respectively) for large times in certain cases (see Figure~\ref{fig:conj}). This observation is supported by the asymptotic stability results from \cite{altafini2026multistability}, and it neither contradicts our result nor the three-phase analysis of \cite{bruno2025multiscale}. Phases 2 and 3 described therein apply only once the distribution $\smash{\rho_t^\beta}$ has fully collapsed onto $E \cap \sphere$, and they are derived under the assumption that $V = \mathrm{Id}$. Under this assumption, the following conjecture becomes trivial, since $F = \mathbb{R}^d$ and hence $F \cap \sphere = \sphere$. 
We provide heuristics and numerical experiments supporting the conjecture in Appendices~\ref{app:conjecture} and \ref{app:expSupportConj}, respectively.
\begin{conjecture}\label{conj:v_max(V)}
    There exist weight matrices $B$ and $V\neq \mathrm{Id}$ satisfying Assumption~\ref{asm:weights},
    and a probability measure $\rho_\infty\in\mathcal{P}(\sphere)$ supported on $F\cap\sphere$ or $F^{\mathrm{abs}}\cap\sphere$ such that $\lim_{t\to\infty}W_2(\rho_t^\beta,\rho_\infty)=0$.
\end{conjecture}
\begin{figure}[h!]
\def\valbeta{30}
\centering%
\hphantom{----}%
\begin{subfigure}{.23\textwidth}%
\includegraphics[width=\linewidth, trim={2cm 2cm 2cm 2cm}, clip]{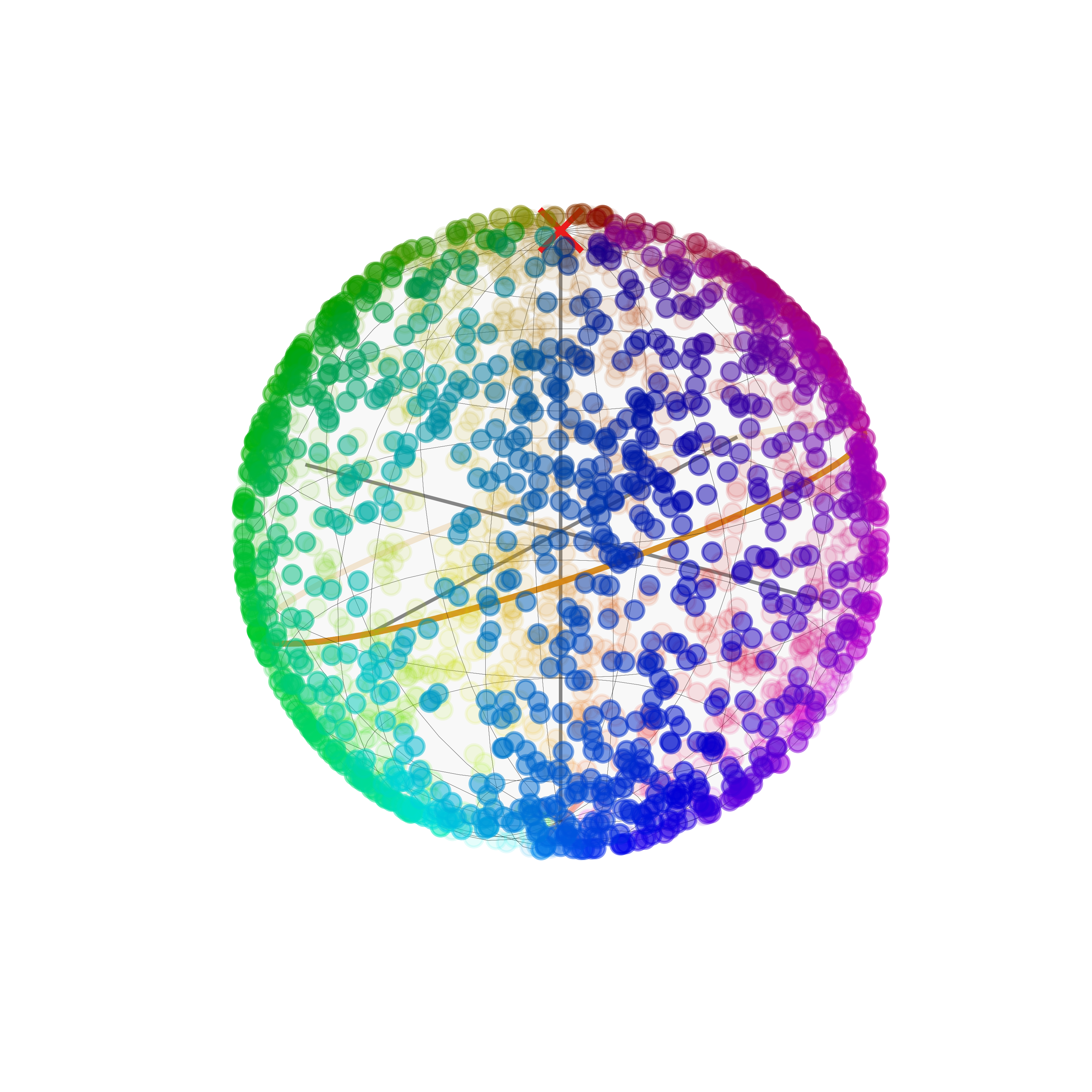}%
\caption{$t=0$}
\end{subfigure}%
\hfill%
\begin{subfigure}{.23\textwidth}%
\includegraphics[width=\linewidth, trim={2cm 2cm 2cm 2cm}, clip]{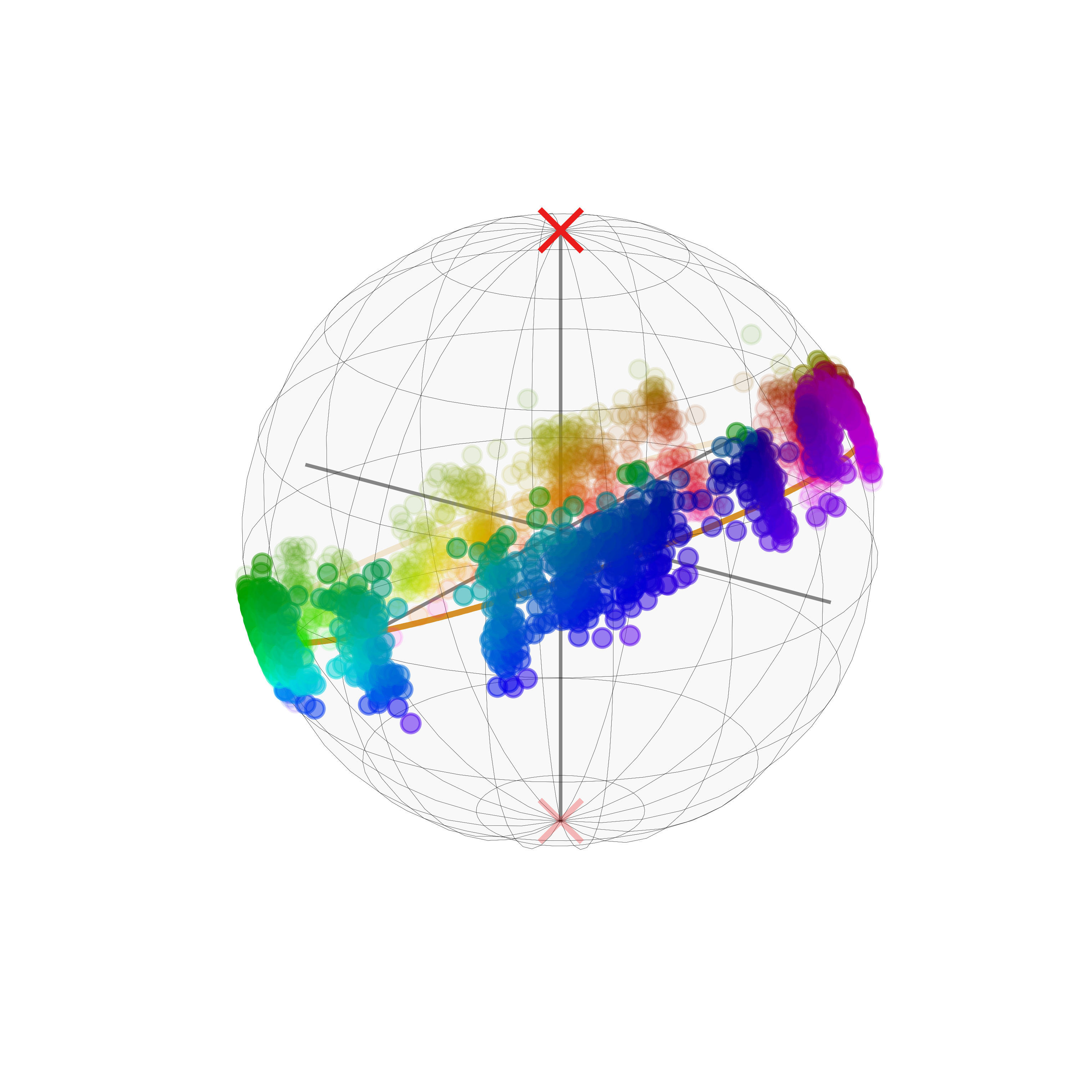}%
\caption{$t=2.5$}
\end{subfigure}%
\hfill%
\begin{subfigure}{.23\textwidth}%
\includegraphics[width=\linewidth, trim={2cm 2cm 2cm 2cm}, clip]{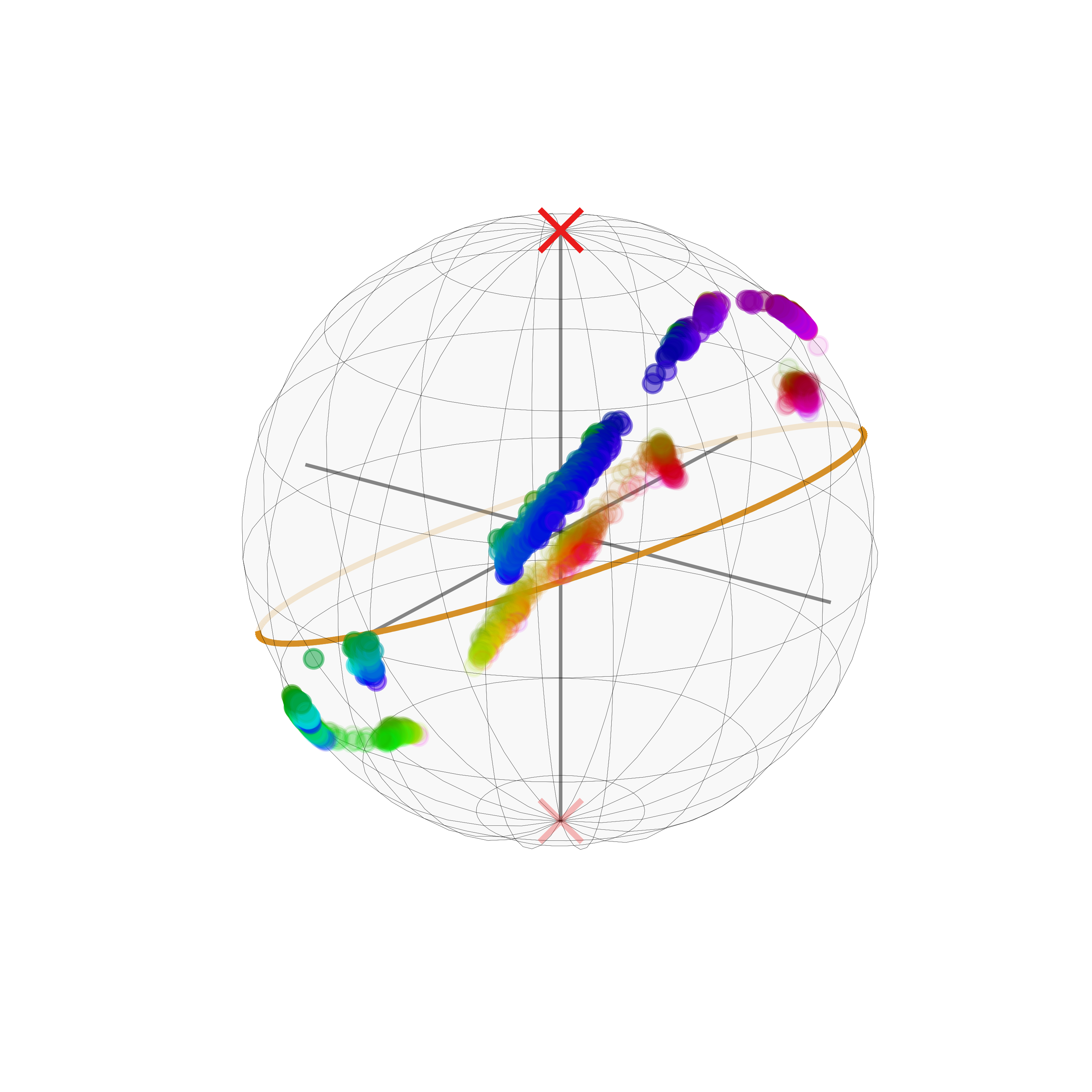}%
\caption{$t=4$}
\end{subfigure}%
\hfill%
\begin{subfigure}{.23\textwidth}%
\includegraphics[width=\linewidth, trim={2cm 2cm 2cm 2cm}, clip]{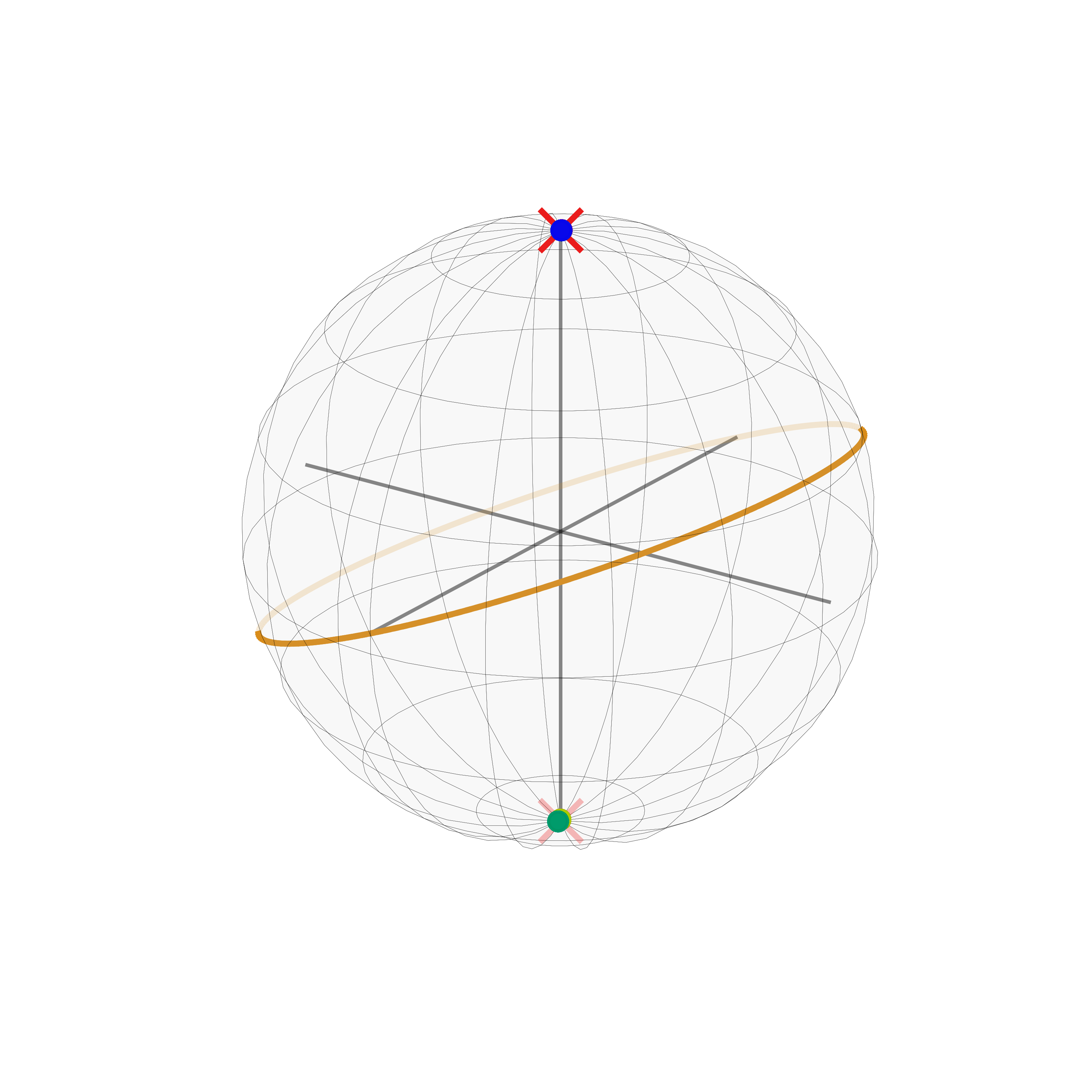}%
\caption{$t=10$}%
\end{subfigure}\hphantom{----}%
\caption{Illustration of Conjecture~\ref{conj:v_max(V)}. Using the setup of Figure~\ref{fig:thm1}, with the same matrix $VB^\top$, but with $V=\operatorname{diag}(1,1,2)$ and $B$ adapted accordingly, we observe initial concentration of $\rho_t^{\beta,n}$ at $E\cap\sphere$ ({\color{myorange}orange circle}) before the terminal clustering in $F\cap\sphere$ ({\color{myred}red crosses}).
}
\label{fig:conj}
\end{figure}

\section{Related work}
\label{sec:relatedworks}

Let us now relate our results to previous works in the literature on the mathematical analysis of transformer models, and outline their connection to a class of interacting particle systems.

\paragraph{Mathematical analysis of transformers.}
A central body of work models transformer~layers~through the lens of continuous dynamical systems and partial differential equations (PDEs).
This perspective was pioneered in \cite{sander2022sinkformers} by linking attention to transport-type PDEs, and expanded in \cite{geshkovski2023mathematical} by incorporating key architectural components such as layer normalization within a mean-field framework.
More recent contributions such as \cite{castin2025unified} provide a unified treatment of these approaches. Together, these works yield a concise mathematical description of transformer dynamics that serves also as our modeling foundation. The authors of \cite{bruno2025multiscale} identify the zero-temperature limit \eqref{eq:zerotemp_meanfield} and establish convergence towards it as $\beta^{-1} \to0$. However, their result is not quantitative, i.e., no rate in $\beta$ nor its interplay with inference time $t$ is provided. The analysis further identifies additional phases of the dynamics once the solution concentrates on the dominant eigenspace~$E$ of $VB^\top$. In contrast, our work takes low but non-zero temperature effects into account and provides quantitative convergence guarantees. 
This perspective is motivated by the numerical evidence in Section~\ref{sec:numericalexperiments} which suggests that $\beta^{-1}>0$ may prevent full collapse onto the eigenspace~$E$. 
Relatedly, \cite{burger2025analysis} also focuses on equation~\eqref{eq:SAdynamics_meanfield} with $\beta^{-1}>0$ and characterize the stationary states of \eqref{eq:SAdynamics_meanfield} in the case $B = \pm V$, where the system can be interpreted as a gradient flow of the energy $\mathcal{E}(\varrho) = \iint e^{\ip{x}{By}}\de\varrho(x)\de\varrho(y)$, either maximizing it if $B=V$ or minimizing it if $B=-V$. We discuss this case in Appendix \ref{app:gradflow} and only emphasize here that our results complement their theory in two ways: we go beyond the gradient flow setting by allowing $B\neq V$ and, in particular, make concrete statements about the asymptotic behavior of $\rho_t^\beta$.

Besides mean-field descriptions, a complementary line of research investigates transformer dynamics at the finite-particle level, with an emphasis on qualitative and asymptotic behavior. In particular, clustering and concentration phenomena are analyzed in \cite{geshkovski2023mathematical,rigollet2025mean,chen2025quantitative}, with extensions to causal attention settings in \cite{karagodin2024clustering,abella2024asymptotic}. The long-time behavior of transformer dynamics has also been studied, including the emergence of metastability \cite{geshkovski2024dynamic,alcalde2025attention},
multistability with several equilibria related to the spectrum of the value matrix \cite{altafini2026multistability}, and phase transitions \cite{MR5004864}, as well as propagation-of-chaos results in large-token limits \cite{bruno2024emergence}. Our work differs by considering token distributions with positive densities and providing quantitative estimates of concentration phenomena in an initial non-asymptotic phase of the dynamics where the attention matrix $B$ plays a crucial role.

\paragraph{Interacting particle systems.}
Transformers share striking similarities with classical interacting particle systems (e.g., the Kuramoto model~\cite{polyanskiy2025synchronization}), and optimization methods (e.g., gradient descent \cite{von2023transformers} or the Frank--Wolfe algorithm \cite{alcalde2025attention}). In particular, they are connected (as noted in \cite{geshkovski2023mathematical}) to \emph{consensus-based optimization} (CBO)~\cite{pinnau2017consensus,riedl2024perspective} which we formalize and exploit in what follows. In line with our setting, we consider CBO constrained to the sphere, as in \cite{fornasier2020consensus}.
The aim is to locate a global minimizer of some objective function $J:\sphere\to\R$ by evolving a particle ensemble $\{x_i\}_{i=1}^n$ that follows a coupled system of stochastic differential equations of the form
\begin{equation}\label{eq:CBO}
\dot{x}_i(t) =  \proj{x_i(t)}\big(m_{\beta,\rho^{\beta,n}_t}\big)
+ \sigma\cdot \operatorname{noise}.
\end{equation}
The so-called consensus point $m_{\beta,\varrho}$, defined for any probability measure $\varrho\in\mathcal{P}(\sphere)$, is a weighted mean to which particles with smaller values of $J$ contribute more. In fact, particle weights can also be determined through a pairwise interaction via polarization, as proposed in~\cite{bungert2025polarized}.
Mathematically, this amounts to kernelizing the consensus point as
\begin{equation}
\label{eq:polarized_m}
    m_{\beta,\varrho}^\kappa(x) = \int_{\sphere} \frac{\kappa(x, y) \exp{(-\beta J(y))}}{\int_{\sphere} \kappa(x, z) \exp{(-\beta J(z))} \de \varrho(z)} y \de \varrho(y),
\end{equation}
where $\kappa:\R^d\times\R^d\to\R$ is some kernel.
With $\N{\dummy}_B\coloneqq\sqrt{\langle \dummy, B\, \dummy \rangle}$ being the energy-norm, we choose
\begin{equation*}
    J(x) = -\tfrac{1}{2}\N{x}_{B}^2, \quad \text{and} \quad \kappa(x,y) =  \exp{\left(-\tfrac{\beta}{2}\N{x - y}_{B}^2\right)}.
\end{equation*}
In the noiseless case $\sigma=0$, equation~\eqref{eq:CBO} with consensus point as in \eqref{eq:polarized_m} leads exactly to the transformer model~\eqref{eq:SAdynamics} when $V\equiv\mathrm{Id}$ and $B\equiv Q^\top K$ is time-independent. This interpretation~motivated us to leverage techniques from the mean-field analysis of interacting particle systems for optimization---most prominently, the quantitative Laplace principle and Lyapunov techniques---to analyze the transformer model \eqref{eq:SAdynamics_meanfield}. Apart from this, by embedding the self-attention dynamics~\eqref{eq:SAdynamics} into the rich family of CBO models~\cite{fornasier2024consensus,riedl2022leveraging,bungert2025polarized}, one can hope to derive more expressive transformers by changing the kernel, the objective function, or by adding additional drift terms and noise (the latter of which leads to noisy transformers as previously investigated in, e.g., \cite{shalova2026solutions,balasubramanian2025structure,engel2026random,shalova2025noisy,peletier2025nonlinear,koubbi2026homogenized,rigollet2025mean,agazzi2026stochasticscalinglimitssynchronization}).

\section{Proof details for the main results}
\label{sec:proof:main}

The proof of Theorem~\ref{thm:main} relies on two key ingredients:
quantitative stability estimates for continuity equations in Wasserstein space which leverage a quantitative Laplace principle,
and Lyapunov-type estimates for the Wasserstein distance to control the convergence of the zero-temperature transformer continuity equation~\eqref{eq:zerotemp_meanfield}. While the former allow to control the Wasserstein distance between the flows~$\rho_t^\beta$ and $\rho_t$ (see Proposition~\ref{prop:difference_continuity_eq}),
the latter enable us to quantify the convergence rate of $\rho_t$ to $\Pi_\sharp\rho_0$ (see Proposition~\ref{prop:Lyapunov_zerotemp_p} in combination with Lemmas~\ref{lem:W2_to_Pi_p} and \ref{lem:Pi_dist}).
Before presenting these key ingredients in Sections~\ref{sec:proof_main:stability} and \ref{sec:proof_main:Lyapunov}, we give the proof of Theorem~\ref{thm:main}, which demonstrates how those tools are used.
\begin{proof}[Proof of Theorem~\ref{thm:main}]
    Denoting by $(\rho_t^\beta)_{t\in[0,T]}$ the solution of \eqref{eq:SAdynamics_meanfield},
    and letting $(\rho_t)_{t\in[0,T]}$ denote the solution of \eqref{eq:zerotemp_meanfield} with coinciding initial datum $\rho_0=\rho_0^\beta$, we can bound by the triangle inequality
    \begin{equation}
        \label{eq:proof:thm:main:0}
        W_2(\rho_t^\beta,\Pi_\sharp\rho_0)
        \leq W_2(\rho_t^\beta,\rho_t)
        +
        W_2(\rho_t,\Pi_\sharp\rho_0).
    \end{equation}
    The first term on the right-hand side of \eqref{eq:proof:thm:main:0} can be estimated by employing the quantitative stability estimate in Wasserstein space from Proposition~\ref{prop:difference_continuity_eq} for the continuity equations~\eqref{eq:SAdynamics_meanfield} and \eqref{eq:zerotemp_meanfield},
    yielding
    \begin{equation}
        \label{eq:proof:main:A}
        W_2(\rho_t^\beta,\rho_t)
        \leq 
        2 \sqrt{\tfrac{\log(\beta+1)}{\beta}} \left(e^{C_1t}-e^{C_0t}\right)
    \end{equation}
    for all $t\in[0,T]$. For the second term on the right-hand side of \eqref{eq:proof:thm:main:0}, we first note that thanks to Proposition~\ref{prop:Lyapunov_zerotemp_p} it holds for the functional $\mathcal{V}_p$ defined in \eqref{eq:VpRp} that
    \begin{equation}
        \label{eq:proof:main:10}
        \mathcal{V}_p(\rho_t)
        \leq \mathcal{V}_p(\rho_0)
        \exp\left(-\tfrac{2p\gamma}{\sigma_{\max}(B)} t\right)\!.
    \end{equation}
    With $\mathcal{V}_p(\rho_0)<\infty$ as of Assumption~\ref{asm:Vp_0},
    we have that $\mathcal{V}_p(\rho_t)<\infty$ for all $t\in[0,T]$.
    This implies that $\rho_t(\sphere \cap E^\perp)=0$ for all $t\in[0,T]$, since otherwise the denominator in the integrand of $\mathcal{V}_p$ would be zero on a set of positive measure.
    Since according to Lemma~\ref{lem:Pi_dist} it furthermore holds that $\Pi_\sharp \rho_t = \Pi_\sharp \rho_0$ for all $t\in [0,T]$,
    an application of Lemma~\ref{lem:W2_to_Pi_p} together with the bound \eqref{eq:proof:main:10} shows that
    \begin{equation}
        \label{eq:proof:main:B}
        W_2(\rho_t,\Pi_\sharp\rho_0)
        = W_2(\rho_t,\Pi_\sharp\rho_t)
        \leq \sqrt{2\mathcal{V}_p(\rho_t)}
        \leq \sqrt{2\mathcal{V}_p(\rho_0)}
        \exp\left(-\tfrac{p\gamma}{\sigma_{\max}(B)} t\right)\!,
    \end{equation}
    verifying that $\sqrt{2\mathcal{V}_p(\rho_t)}$ is a suitable Lyapunov functional for $W_2(\rho_t,\Pi_\sharp\rho_0)$.
    Inserting \eqref{eq:proof:main:A} and \eqref{eq:proof:main:B} into \eqref{eq:proof:thm:main:0} concludes the proof.
\end{proof}

\subsection{Quantitative estimates for the Wasserstein distance between the flows \texorpdfstring{$\rho_t^\beta$}{rhot beta} and \texorpdfstring{$\rho_t$}{rhot}}
\label{sec:proof_main:stability}

To estimate the Wasserstein distance $W_2(\rho_t^\beta,\rho_t)$ of the solutions~$\rho_t^\beta$ and $\rho_t$ of the transformer continuity equations \eqref{eq:SAdynamics_meanfield} and \eqref{eq:zerotemp_meanfield} in terms of the temperature parameter $\beta$, we develop a stability estimate in Wasserstein space,
which is based on a quantitative version~\cite{fornasier2024consensus} of the Laplace principle~\cite{dembo2009large}. Denoting by $\varrho\in\mathcal{P}(\sphere)$ a fully supported probability measure, for any fixed $x\in\sphere$ the vector
\begin{equation}
    \label{eq:consensus_point}
    m_{\beta,\varrho}(x)
    \coloneqq 
    \int_{\sphere} \frac{\exp\left(\beta\ip{B^\top x}{y}\right)}{\int_{\sphere} \exp\left(\beta\ip{B^\top x}{z}\right)\!\de\varrho(z)}y\de\varrho(y)
\end{equation}
converges as $\beta\to\infty$ to a maximizer of the function $\sphere\ni y\mapsto\ip{B^\top x}{y}$, which is given by
\begin{equation}
    \label{eq:y*}
    y^*(x)
    \coloneqq
    \frac{B^\top x}{\N{B^\top x}},
    \qquad
    x\in\sphere,
\end{equation}
if $B$ is invertible. More rigorously, the next lemma holds; see Appendix~\ref{app:sec:Laplace} for the proof.
\begin{lemma}
\label{lem:LaplacePrinciple}
    Assume that the matrix $B$ satisfies \ref{asm:Binvertible}.
    Let $\varrho\in\mathcal{P}(\sphere)$, $r>0$, and define the spherical cap $\mathcal{B}_r(y^*(x))\coloneqq \left\{y\in\sphere : \N{y-y^*(x)} \leq r \right\}$. 
    Then, for every $\beta>0$ and every $q>0$, it holds
    \begin{equation}
        \label{eq:QL-sphere}
        \Nbig{m_{\beta,\varrho}(x)-y^*(x)}
        \leq
        \sqrt{r^2+\tfrac{2q}{\sigma_{\min}(B)}} + \tfrac{2e^{-\beta q}}{\varrho \left(\mathcal{B}_r(y^*(x))\right)}.
\end{equation}
\end{lemma}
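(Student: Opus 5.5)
We follow the standard quantitative Laplace principle argument of \cite{fornasier2024consensus}, adapted to the sphere and to the linear objective $y\mapsto\ip{B^\top x}{y}$. Fix $x\in\sphere$ and write $y^*=y^*(x)$, $a\coloneqq\N{B^\top x}\geq\sigma_{\min}(B)$, so that $\ip{B^\top x}{y}=a\ip{y^*}{y}$ for $y\in\sphere$, with maximal value $a$ attained exactly at $y^*$. Denote by $\eta_\beta\coloneqq \exp(\beta a\ip{y^*}{\dummy})\varrho/\int\exp(\beta a\ip{y^*}{z})\de\varrho(z)$ the Gibbs-reweighted probability measure, so that $m_{\beta,\varrho}(x)=\int y\de\eta_\beta(y)$. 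By Jensen's inequality, $\N{m_{\beta,\varrho}(x)-y^*}\leq\int\N{y-y^*}\de\eta_\beta(y)$, and we split this integral over a set $K_q\coloneqq\{y\in\sphere:\ a\ip{y^*}{y}\geq a-q'\}$ and its complement, for a threshold $q'$ chosen below.

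On $K_q$ we use the spherical identity $\N{y-y^*}^2=2-2\ip{y^*}{y}$: membership in $K_q$ gives $\N{y-y^*}^2\leq 2q'/a\leq 2q'/\sigma_{\min}(B)$. To reach the form $\sqrt{r^2+2q/\sigma_{\min}(B)}$, we take the threshold relative to the cap: on $\mathcal{B}_r(y^*)$ one has $a\ip{y^*}{y}\geq a-ar^2/2$, so we set $K_q\coloneqq\{y:\ a\ip{y^*}{y}\geq a - ar^2/2-q\}$. On $K_q$ this gives $\N{y-y^*}^2\leq r^2+2q/a\leq r^2+2q/\sigma_{\min}(B)$. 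The contribution of $K_q$ is then at most $\sqrt{r^2+2q/\sigma_{\min}(B)}$, since $\eta_\beta(K_q)\leq1$.

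On $K_q^c$ we bound $\N{y-y^*}\leq 2$ and estimate $\eta_\beta(K_q^c)$. For the numerator, $\int_{K_q^c}\exp(\beta a\ip{y^*}{y})\de\varrho\leq \exp(\beta(a-ar^2/2-q))$. For the denominator, restricting to $\mathcal{B}_r(y^*)$ gives $\int\exp(\beta a\ip{y^*}{z})\de\varrho(z)\geq \varrho(\mathcal{B}_r(y^*))\exp(\beta(a-ar^2/2))$. Hence $\eta_\beta(K_q^c)\leq e^{-\beta q}/\varrho(\mathcal{B}_r(y^*))$, and this part contributes at most $2e^{-\beta q}/\varrho(\mathcal{B}_r(y^*))$. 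Summing the two contributions yields \eqref{eq:QL-sphere}. If $\varrho(\mathcal{B}_r(y^*))=0$, the bound is trivial, reading the right-hand side as $+\infty$.

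The only delicate step is choosing the level set $K_q$ so that the excess from the cap (of order $ar^2/2$) cancels exactly between numerator and denominator, while the geometric conversion on $K_q$ still yields $r^2+2q/a$. Invertibility of $B$ enters only through $a\geq\sigma_{\min}(B)>0$, which ensures that $y^*$ is well defined and turns $1/a$ into $1/\sigma_{\min}(B)$.
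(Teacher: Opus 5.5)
Your proof is correct and is essentially the paper's argument. Your level set $K_q=\{y\in\sphere:\ a\ip{y^*}{y}\geq a-ar^2/2-q\}$ is exactly the cap $\mathcal{B}_{\tilde r}(y^*(x))$ with $\tilde r=\sqrt{r^2+2q/\N{B^\top x}}$ that the paper uses. The remaining steps also match the paper's: the Jensen split, the lower bound on the normalizing constant by $\varrho(\mathcal{B}_r(y^*(x)))\,e^{\beta(a-ar^2/2)}$, and the final estimate $\N{B^\top x}\geq\sigma_{\min}(B)$.
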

Using this result, we quantify the convergence $\rho_t^\beta\rightarrow\rho_t$ as $\beta\to\infty$, which has been shown earlier in \cite[Theorem 3.2]{bruno2025multiscale} under invertibility assumptions on $B$ and $V$ as well as Assumption~\ref{asm:lower_bound} on the initial datum $\rho_0$.
We show the following; see Appendix~\ref{app:sec:stability} for~the~proof.
\begin{proposition}
    \label{prop:difference_continuity_eq}
    Assume that the matrix $B$ satisfies \ref{asm:Binvertible}.
    Let $(\rho_t^\beta)_{t\in[0,T]}$ and $(\rho_t)_{t\in[0,T]}$ denote weak solutions to the continuity equations~\eqref{eq:SAdynamics_meanfield} and \eqref{eq:zerotemp_meanfield} with coinciding initial datum $\rho_0^\beta=\rho_0$,
    which is assumed to satisfy Assumption \ref{asm:lower_bound}.
    Then, for all $t\in[0,T]$, it holds
    \begin{equation*}
        W_2(\rho_t^\beta,\rho_t)
        \leq
        2 \sqrt{\tfrac{\log(\beta+1)}{\beta}} \left(e^{C_1t}-e^{C_0t}\right).
    \end{equation*}
\end{proposition}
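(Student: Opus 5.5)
We plan a coupling argument via the characteristic flows, with a Grönwall estimate that involves the quantitative Laplace principle of Lemma~\ref{lem:LaplacePrinciple}. Write the two velocity fields as
\[
v^\beta_t(x)=\proj{x}\big(V m_{\beta,\rho_t^\beta}(x)\big)
\quad\text{and}\quad
v_t(x)=\proj{x}\big(V y^*(x)\big).
\]
Note that $\proj{x}(VB^\top x/\N{B^\top x})=\proj{x}(Vy^*(x))$, so $v_t$ is indeed the zero-temperature field. The field $v_t$ does not depend on $\rho_t$; it is a fixed smooth field on $\sphere$, since $B$ is invertible. Its Lipschitz constant $L$ depends only on $\sigma_{\max}(V)$, $\sigma_{\max}(B)$ and $\sigma_{\min}(B)$.

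\textbf{Coupling.} Let $X_t$ and $Y_t$ be the flows of $v^\beta$ and $v$ started from the same point $X_0=Y_0\sim\rho_0$. Then $W_2^2(\rho_t^\beta,\rho_t)\le \mathbb{E}\N{X_t-Y_t}^2=:D_t^2$. Differentiating gives
\[
\tfrac{d}{dt}D_t^2\le 2L D_t^2+2D_t\Big(\mathbb{E}\N{v_t^\beta(X_t)-v_t(X_t)}^2\Big)^{1/2},
\]
and the last factor is bounded by $\sigma_{\max}(V)\sup_x\N{m_{\beta,\rho_t^\beta}(x)-y^*(x)}$. Hence $\dot D_t\le L D_t+\sigma_{\max}(V)\,\eps_\beta(t)$, where $\eps_\beta(t)$ is the Laplace error.

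\textbf{Laplace error.} We apply Lemma~\ref{lem:LaplacePrinciple} with $\varrho=\rho_t^\beta$. This requires a lower bound on the mass of caps, $\rho_t^\beta(\mathcal{B}_r(y^*(x)))$, uniform in $x$ and quantitative in $t$. We obtain it by propagating the density lower bound of Assumption~\ref{asm:lower_bound} along the transport. The density $f_t$ of $\rho_t^\beta$ satisfies, along characteristics, $\tfrac{d}{dt}\log f_t(X_t)=-\div v_t^\beta(X_t)$. The field $v^\beta_t$ is smooth in $x$, but its divergence may grow like $\beta$. This is the main obstacle. To get around it, we write $\div v^\beta_t=\div_x\proj{x}$ applied to the bounded vector $Vm$, plus the term $\mathrm{tr}(\proj{x}V\,\nabla_x m)$. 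The first part is bounded by a constant times $d\,\sigma_{\max}(V)$. For the second, $\nabla_x m=\beta\,\mathrm{Cov}_{w}(y)B$, where $\mathrm{Cov}_w$ is the covariance under the attention weights; this is bounded only by $\beta\sigma_{\max}(B)$.

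As a more robust alternative, we avoid the density and instead use the fact that the flow map is bi-Lipschitz on short scales. The mass of a cap of radius $r$ is at least $\rho_0$ of the preimage of that cap. The preimage contains a cap of radius $re^{-Kt}$, where $K$ is a uniform bound on $\N{v^\beta}_{\mathrm{Lip}}$, which again may involve $\beta$. I would try first the direct approach: bound $\rho^\beta_t(\mathcal{B}_r)\ge \ell_0 c_d r^{d-1}e^{-C_0' t}$ with $C_0'$ depending on $d,\sigma_{\max}(V),\sigma_{\max}(B)$. This uses the fact that $\mathrm{tr}(\proj{x}V\beta\,\mathrm{Cov}_w B)$ is bounded above up to $\beta$-independent constants, because the covariance collapses as $\beta$ grows. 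If that fails, we accept the $\beta$-dependence and absorb it into the $\log(\beta+1)$ factor.

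\textbf{Choice of parameters and conclusion.} Given the bound $\rho_t^\beta(\mathcal{B}_r)\gtrsim \ell_0 r^{d-1}e^{-C_0't}$, we choose $r^2=q=\log(\beta+1)/\beta$. Then
\[
\eps_\beta(t)\lesssim\sqrt{\tfrac{\log(\beta+1)}{\beta}}+\frac{e^{C_0't}}{\ell_0}\,\beta^{-1}\Big(\tfrac{\beta}{\log(\beta+1)}\Big)^{(d-1)/2}.
\]
With $q=\kappa\log(\beta+1)/\beta$ and $\kappa$ of order $d$, the second term is also $\lesssim\sqrt{\log(\beta+1)/\beta}\,e^{C_0t}$. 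Integrating the differential inequality $\dot D\le C_1D+a e^{C_0t}$ with $D_0=0$ gives
\[
D_t\le \frac{a}{C_1-C_0}\big(e^{C_1t}-e^{C_0t}\big),
\]
and the constants are normalized to match the prefactor $2$. This yields the claimed estimate.
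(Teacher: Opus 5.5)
Your overall architecture matches the paper's: you split the velocity discrepancy into a Laplace error at a common point plus a Lipschitz term for the zero-temperature field, you choose $q=\kappa\log(\beta+1)/\beta$ and $r=\sqrt{q}$ with $\kappa$ of order $d$ (the paper takes $\kappa=d/2$), and you close with Grönwall from $D_0=0$. Using a synchronous coupling of the two characteristic flows instead of differentiating $W_2^2$ along optimal plans is a legitimate and slightly more elementary variant. The genuine gap is the step you flag as the main obstacle. You need $\rho_t^\beta(\mathcal{B}_r(y^*(x)))\gtrsim \ell_0 r^{d-1}e^{-C_0t}$ uniformly in $x$, with $C_0$ \emph{independent of} $\beta$, and neither of your two routes provides it.

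The direct route rests on the claim that $\beta\,\mathrm{Cov}_w$ stays bounded ``because the covariance collapses''. Knowing $\mathrm{Cov}_w\to 0$ is not enough. You need the rate $\mathrm{Cov}_w=O(\beta^{-1})$ uniformly in $x$ and $t$, which requires quantitative control of $\rho_t^\beta$ at scale $\beta^{-1/2}$ around every $y^*(x)$, such as two-sided density bounds or a bound on $\nabla\log f_t^\beta$. That is exactly the kind of information you are trying to produce, so as stated the argument is circular. A naive bootstrap with upper and lower density bounds only closes on a short time interval.

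The fallback fails outright. A divergence or Lipschitz bound of order $\beta$ turns the cap-mass lower bound into something like $e^{-c\beta t}$. To beat this with the factor $e^{-\beta q}$ you would need $q\gtrsim t$, so the first Laplace term $\sqrt{r^2+2q/\sigma_{\min}(B)}$ no longer vanishes. All decay in $\beta$ is then lost for every fixed $t>0$; this is not a $\log(\beta+1)$ loss.

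The paper does not prove this step itself. It imports it as Lemma~\ref{lem:density}, i.e.\ \cite[Lemma A.10]{bruno2025multiscale}, which gives $\min_x f_t^\beta(x)\ge\frac12\ell_0e^{-C_0t}$ with $\beta$-independent $C_0$. This is the only place where the Lipschitz continuity of $f_0$ from Assumption~\ref{asm:lower_bound} enters, and your argument never uses that hypothesis. For the proof to go through, you must either invoke such a $\beta$-uniform density lower bound or prove one.
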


\subsection{Lyapunov-type estimates in Wasserstein space controlling the convergence of \texorpdfstring{$\rho_t$}{rhot} to \texorpdfstring{$\Pi_\sharp\rho_0$}{Pi rho0}}
\label{sec:proof_main:Lyapunov}

To measure how well the token distribution~$\rho_t$ of ~\eqref{eq:zerotemp_meanfield} concentrates near the dominant eigenspace $E$ of $VB^\top$, let us define for a measure $\varrho\in\mathcal{P}(\sphere)$ and for any $p\in (0, 1]$, the energy (Lyapunov) functional
\begin{equation}
    \label{eq:VpRp}
    \mathcal{V}_p(\varrho)
    \coloneqq \int_{\sphere} R_p(x) \de \varrho(x)
    \quad\text{ with }\quad
    R_p(x)
    \coloneqq
    \frac{\N{(\mathrm{Id}-\PE)x}^{2p}}{\N{\PE x}^{2p}}.
\end{equation}
Here, $R_p$ is equal to zero on $E$ and infinity on $E^\perp$. The following auxiliary result shows that $R_p$ can be regarded as a distance from $E$; see Appendix~\ref{app:sec:Lyapunov_aux} for the proof.
\begin{lemma}
    \label{lem:W2_to_Pi_p}
    Let $E\subset \mathbb R^d$ be a linear subspace
    and let $p\in(0,1]$. 
    Then, for all $x\in \mathbb S^{d-1}\setminus E^\perp$, it holds that
    $\N{x-\Pi(x)}^2 \leq 2 R_p(x)$ and for any probability measure $\varrho\in\mathcal P(\mathbb S^{d-1})$ with $\varrho(E^\perp \cap \sphere)=0$ we have
    \begin{equation*}
        W_2^2(\varrho,\Pi_\sharp\varrho)
        \le 2\int_{\mathbb S^{d-1}} R_p(x)\de\varrho(x)
        = 2\mathcal{V}_p(\varrho).
    \end{equation*}
\end{lemma}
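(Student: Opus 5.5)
The plan is to prove the pointwise inequality $\N{x-\Pi(x)}^2\le 2R_p(x)$ first and then obtain the Wasserstein bound by integrating it along the coupling induced by the map $\Pi$.

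\emph{Pointwise bound.} Fix $x\in\sphere\setminus E^\perp$ and set $a\coloneqq\N{\PE x}\in(0,1]$ and $b\coloneqq\N{(\mathrm{Id}-\PE)x}$, so that $a^2+b^2=1$. Since $x=\PE x+(\mathrm{Id}-\PE)x$ and $\Pi(x)=\PE x/a$ lies in $E$, orthogonality gives
\begin{equation*}
    \N{x-\Pi(x)}^2=\Nbig{\PE x - \tfrac{\PE x}{a}}^2+b^2=(1-a)^2+b^2=2-2a.
\end{equation*}
It then remains to show $2(1-a)\le 2(b/a)^{2p}$, that is, $1-a\le (b^2/a^2)^p$. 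Since $a\le 1$, we have $1-a\le 1-a^2=b^2$, so $1-a\le b^2/a^2$ as well.

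\emph{Handling $p<1$.} Set $s\coloneqq b^2/a^2$. If $s\ge 1$, then $s^p\ge 1\ge 1-a$. If $s<1$, then $s^p\ge s\ge 1-a$. In both cases $1-a\le s^p=R_p(x)$, and the pointwise bound follows. This case split on $s$ is the only step that needs care; the rest is bookkeeping.

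\emph{Wasserstein bound.} Let $\varrho\in\mathcal P(\sphere)$ with $\varrho(E^\perp\cap\sphere)=0$. Then $\Pi$ is defined $\varrho$-almost everywhere and is measurable, being continuous on the open set $\sphere\setminus E^\perp$. The plan coupling $\pi\coloneqq(\mathrm{Id},\Pi)_\sharp\varrho$ has marginals $\varrho$ and $\Pi_\sharp\varrho$. Hence
\begin{equation*}
    W_2^2(\varrho,\Pi_\sharp\varrho)\le\int_{\sphere}\N{x-\Pi(x)}^2\de\varrho(x)\le 2\int_{\sphere}R_p\de\varrho=2\mathcal V_p(\varrho),
\end{equation*}
where the second inequality is the pointwise bound.
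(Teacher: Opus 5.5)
Your proof is correct and follows essentially the same route as the paper: the identity $\N{x-\Pi(x)}^2=2(1-\N{\PE x})$, the bound $1-\N{\PE x}\le\min\{1,R_1(x)\}\le R_1(x)^p$ via the case split, and the coupling $(\mathrm{Id},\Pi)_\sharp\varrho$. The only cosmetic difference is how you get $1-a\le b^2/a^2$: you use the elementary chain $1-a\le 1-a^2=b^2\le b^2/a^2$, whereas the paper writes $1-a=1-(1+q)^{-1/2}$ with $q=b^2/a^2$ and shows $f(q)\le q$ by a derivative argument.
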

Along the flow~$\rho_t$ of the zero-temperature equation~\eqref{eq:zerotemp_meanfield}, we can quantify the decay of $\mathcal{V}_p(\rho_t)$ as demonstrated in the subsequent result;
see Appendix~\ref{app:sec:Lyapunov} for the proof.
\begin{proposition}
    \label{prop:Lyapunov_zerotemp_p}
    Assume that the matrices~$B$ and $V$
    satisfy Assumption~\ref{asm:weights}.
    Moreover, let the initial distribution $\rho_0$ be such that Assumption~\ref{asm:Vp_0} holds for some $p\in (0, 1]$.
    Let $(\rho_t)_{t\in[0,T]}$ denote the weak solution of \eqref{eq:zerotemp_meanfield} with initial datum $\rho_0$.
    Then, for all $t\in[0,T]$, it holds
    \begin{equation*}
        \mathcal{V}_p(\rho_t)
        \leq
        \mathcal{V}_p(\rho_0)\exp\left(-\tfrac{2 p\gamma}{\sigma_{\max}(B)} t\right)\!.
    \end{equation*}
\end{proposition}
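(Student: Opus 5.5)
The plan is to use the Lagrangian (characteristic) representation of the zero-temperature flow and prove the decay \emph{pointwise along trajectories}. This sidesteps any regularity issue caused by the fact that $R_p$ is unbounded near $E^\perp$.

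\textbf{Step 1: push-forward representation.} By \ref{asm:Binvertible}, $\N{B^\top x}\ge\sigma_{\min}(B)>0$ on $\sphere$. Hence the velocity field
\[
v(x)=\proj{x}\Big(\tfrac{Ax}{\N{B^\top x}}\Big),\qquad A\coloneqq VB^\top,
\]
is smooth and tangent on the compact manifold $\sphere$. It therefore generates a global flow $\Phi_t$, and the weak solution of \eqref{eq:zerotemp_meanfield} is $\rho_t=(\Phi_t)_\sharp\rho_0$, by uniqueness for continuity equations with Lipschitz fields. Consequently
\[
\mathcal{V}_p(\rho_t)=\int_\sphere R_p(\Phi_t(x))\de\rho_0(x).
\]
It thus suffices to show $R_p(\Phi_t(x))\le R_p(x)e^{-2p\gamma t/\sigma_{\max}(B)}$ for every $x\notin E^\perp$. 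The set $E^\perp\cap\sphere$ is $\rho_0$-null by Assumption~\ref{asm:Vp_0}, and its complement is invariant. This invariance is verified in Step 2: $a(t)=\N{\PE\Phi_t(x)}^2$ solves a linear ODE $\dot a = g(t)a$, so $a$ cannot vanish in finite time.

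\textbf{Step 2: pointwise ODE computation.} Along $x(t)=\Phi_t(x)$, set
\[
a=\N{\PE x}^2,\quad b=\N{(\mathrm{Id}-\PE)x}^2,\quad n=\N{B^\top x},\quad \lambda=\ip{x}{Ax}.
\]
By symmetry in \ref{asm:VBTsymmetric}, $A$ commutes with $\PE$, with $A\PE=\mu_1\PE$ and $\ip{(\mathrm{Id}-\PE)x}{A(\mathrm{Id}-\PE)x}\le\mu_2 b$. Using $\dot x=(Ax-\lambda x)/n$, this gives
\[
\dot a=\tfrac{2}{n}(\mu_1-\lambda)a,\qquad \dot b=\tfrac{2}{n}(c-\lambda b),\quad c\le \mu_2 b.
\]
When $b>0$,
\[
\tfrac{d}{dt}\log\tfrac{b}{a}=\tfrac{2}{n}\Big(\tfrac{c}{b}-\mu_1\Big)\le-\tfrac{2\gamma}{n}\le-\tfrac{2\gamma}{\sigma_{\max}(B)},
\]
so $\tfrac{d}{dt}R_p=p\,R_p\,\tfrac{d}{dt}\log\tfrac{b}{a}\le-\tfrac{2p\gamma}{\sigma_{\max}(B)}R_p$. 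The bound $n\le\sigma_{\max}(B)$ on $\sphere$ was used in the last inequality. If $b=0$ at some time, then $x\in E$. Since $E\cap\sphere$ consists of equilibria of $v$, the trajectory stays there and $R_p\equiv 0$. Grönwall then gives the pointwise exponential decay. The convention $\gamma=\infty$ only occurs when $k=d$, in which case $R_p\equiv0$ and the claim is trivial.

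\textbf{Step 3: conclusion and main obstacle.} Integrating the pointwise bound against $\rho_0$ yields the stated estimate for $\mathcal{V}_p(\rho_t)$. The only delicate point is justifying the Lagrangian representation for the weak solution, in place of differentiating $\int R_p\de\rho_t$ directly, which would require a truncation argument because $R_p$ is unbounded. I expect this to be routine given the smoothness of $v$. The sign computation in Step 2 is where the symmetry assumption \ref{asm:VBTsymmetric} is genuinely needed: it lets $A$ split along $E\oplus E^\perp$ and makes the spectral gap $\gamma$ appear.
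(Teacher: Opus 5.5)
Your proof is correct, and its core computation is the same as the paper's. Your identity $\tfrac{\mathrm{d}}{\mathrm{d}t}\log(b/a)=\tfrac{2}{n}\bigl(\tfrac{c}{b}-\mu_1\bigr)$ is exactly the paper's formula $\ip{\nabla R_1(x)}{\proj{x}(VB^\top x)}=2R_1(x)\bigl(\tfrac{\ip{w}{VB^\top w}}{\N{w}^2}-\mu_1\bigr)$, with $w=(\mathrm{Id}-\PE)x$, divided by $R_1(x)\N{B^\top x}$. The difference lies in how this pointwise inequality is turned into a bound on $\mathcal{V}_p(\rho_t)$.

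The paper argues in Eulerian form. It tests the weak formulation of \eqref{eq:zerotemp_meanfield} with the truncation $R_{p,K}=\min\{R_p,K\}$, integrates the resulting differential inequality in time, removes the truncation by monotone convergence, and applies Gr\"onwall to the integrated functional.

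You argue in Lagrangian form. You write $\rho_t=(\Phi_t)_\sharp\rho_0$ and prove the decay along every characteristic. This is a stronger, trajectory-wise statement, and it needs no truncation, no limit passage, and no chain rule for a non-smooth test function.

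That last point matters. The paper justifies using $R_{p,K}$ by calling it Lipschitz. But near $E\cap\sphere$ one has $R_p(x)\approx\N{(\mathrm{Id}-\PE)x}^{2p}$, which is not Lipschitz for $p<\tfrac12$, so that step needs extra justification. In your argument this issue does not arise: trajectories starting on $E$ are stationary, and those starting in $\sphere\setminus(E\cup E^\perp)$ never reach $E\cup E^\perp$.

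Your route also gives Lemma~\ref{lem:Pi_dist} almost for free. Since $\tfrac{\mathrm{d}}{\mathrm{d}t}\PE x=\tfrac{\mu_1-\lambda}{n}\PE x$, the vector $\PE\Phi_t(x)$ stays a positive multiple of $\PE x$, so $\Pi(\Phi_t(x))=\Pi(x)$.

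The price is an appeal to uniqueness of weak solutions of continuity equations with a Lipschitz velocity field (for instance after extending the field to a neighbourhood of $\sphere$). This is standard, but it is an extra input that the paper's write-up does not make explicit.
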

Noticing that the projection~$\Pi$ defined in \eqref{eq:Pi} is invariant in time for a solution~$\rho_t$ of \eqref{eq:zerotemp_meanfield},
as made rigorous by the next lemma, Lemma~\ref{lem:W2_to_Pi_p} justifies that $\mathcal{V}_p$ is indeed a suitable Lyapunov functional for the flow $\rho_t$ since $W_2^2(\rho_t,\Pi_\sharp\rho_0) = W_2^2(\rho_t,\Pi_\sharp\rho_t) \leq 2\mathcal{V}_p(\rho_t)$
with $\mathcal{V}_p(\rho_t)$ decaying exponentially fast thanks to Proposition~\ref{prop:Lyapunov_zerotemp_p}. The proof of Lemma~\ref{lem:Pi_dist} is given in Appendix~\ref{app:lem:Pi_dist}.
\begin{lemma}
    \label{lem:Pi_dist}
    Assume that the matrices~$B$ and $V$
    satisfy Assumption~\ref{asm:weights}.
    Moreover, let the initial distribution $\rho_0$ be such that Assumption~\ref{asm:Vp_0} holds for some $p\in (0, 1]$.
    Let $(\rho_t)_{t\in[0,T]}$ denote the weak solution of \eqref{eq:zerotemp_meanfield} with initial datum $\rho_0$. Then it holds $\Pi_\sharp \rho_t = \Pi_\sharp \rho_0$ for all $t\in [0,T]$. 
\end{lemma}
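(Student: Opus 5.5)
The plan is to show that $\Pi$ is constant along the characteristics of \eqref{eq:zerotemp_meanfield}, and then pass this to the push-forward measures. The zero-temperature velocity field is
\[
v(x)=\proj{x}\Big(\tfrac{VB^\top x}{\N{B^\top x}}\Big),
\]
which does not depend on $\rho_t$. Since $B$ is invertible, $\N{B^\top x}\geq\sigma_{\min}(B)>0$ on $\sphere$, so $v$ is smooth and globally Lipschitz on the compact sphere. Consequently \eqref{eq:zerotemp_meanfield} is a linear transport equation, and its unique weak solution is $\rho_t=(X_t)_\sharp\rho_0$, where $X_t$ is the flow of $\dot x=v(x)$ by standard uniqueness for continuity equations with Lipschitz fields. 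It therefore suffices to show $\Pi(X_t(x))=\Pi(x)$ for every $x\in\sphere\setminus E^\perp$, and that $X_t$ preserves $\sphere\setminus E^\perp$. Then
\[
\Pi_\sharp\rho_t=(\Pi\circ X_t)_\sharp\rho_0=\Pi_\sharp\rho_0,
\]
where we use $\rho_0(E^\perp\cap\sphere)=0$, which follows from Assumption~\ref{asm:Vp_0}.

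The key computation is along a trajectory. Write $x=x(t)$, $M\coloneqq VB^\top$ (symmetric), $s(t)\coloneqq\N{B^\top x}^{-1}>0$, and $\lambda(t)\coloneqq\ip{x}{Mx}$. Then
\[
\dot x=s\,(Mx-\lambda x).
\]
Because $E$ is an eigenspace of the symmetric matrix $M$, $\PE$ commutes with $M$ and $M\PE=\mu_1\PE$. Applying $\PE$ gives the linear, scalar-coefficient ODE
\[
\tfrac{d}{dt}\PE x = s(t)\,(\mu_1-\lambda(t))\,\PE x,
\]
so $\PE x(t)=\exp\!\big(\int_0^t s(\mu_1-\lambda)\big)\PE x(0)$. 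Thus $\PE x(t)$ is a positive multiple of $\PE x(0)$. Normalizing yields $\Pi(x(t))=\Pi(x(0))$, and also shows that $\PE x(t)\neq0$ whenever $\PE x(0)\neq 0$, so $\sphere\setminus E^\perp$ is invariant. Symmetrically, $E^\perp\cap\sphere$ is invariant, consistent with the discussion after Assumption~\ref{asm:Vp_0}.

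The main obstacle is not this computation but the justification that the weak solution is the push-forward along the flow, i.e.\ uniqueness of weak solutions. Here this reduces to the classical result for linear continuity equations with Lipschitz velocity on a compact manifold. If one prefers to avoid the flow representation, an alternative is to test the weak formulation with $\varphi\circ\Pi$ for smooth $\varphi$. One then shows $\nabla(\varphi\circ\Pi)\cdot v=0$ on $\sphere\setminus E^\perp$ by the same commutation identity, handling the singularity of $\Pi$ near $E^\perp$ via a cutoff together with $\rho_t(E^\perp\cap\sphere)=0$. I would use the flow argument as the primary route, since it sidesteps that cutoff entirely.
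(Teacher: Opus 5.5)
Your proof is correct, but it takes a different route from the paper's. The paper never passes to characteristics. It tests the weak formulation of \eqref{eq:zerotemp_meanfield} with the regularized functions $\varphi_\delta(x)=\eta_\delta(\N{\PE x})\,\varphi(\Pi(x))$. Lemma~\ref{lem:jacobian} shows that $D\Pi(x)$ annihilates the zero-temperature field on $\sphere\setminus E^\perp$, which is the infinitesimal form of your observation that $\PE x(t)$ is only rescaled by a positive factor; this kills the main term. The remaining cutoff term is bounded by $C\rho_t(A_\delta)$ with $A_\delta=\{\delta\le\N{\PE x}\le 2\delta\}$. The Lyapunov bound $\mathcal{V}_p(\rho_t)\le\mathcal{V}_p(\rho_0)$ from Proposition~\ref{prop:Lyapunov_zerotemp_p} makes this $\mathcal{O}(\delta^{2p})$, and the same bound gives $\rho_t(E^\perp\cap\sphere)=0$, so that $\Pi_\sharp\rho_t$ is well defined. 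The alternative you sketch at the end is essentially this argument.

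Your primary route instead exploits that $v$ is a fixed Lipschitz tangent field, so the weak solution is $(X_t)_\sharp\rho_0$, and then integrates $\tfrac{\de}{\de t}\PE x=s(\mu_1-\lambda)\PE x$ explicitly. This buys a pointwise statement, $\Pi\circ X_t=\Pi$ on $\sphere\setminus E^\perp$, together with the invariance of both $\sphere\setminus E^\perp$ and $E^\perp\cap\sphere$. It also only needs $\rho_0(E^\perp\cap\sphere)=0$ rather than $\mathcal{V}_p(\rho_0)<\infty$, and does not depend on Proposition~\ref{prop:Lyapunov_zerotemp_p} at all.

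The price is reliance on the uniqueness (Lagrangian representation) theorem for linear continuity equations with Lipschitz fields. On $\sphere$ this follows by regarding $\rho_t$ as a weak solution in $\R^d$ for a Lipschitz extension of $v$, which is legitimate because $v$ is tangent. The paper's argument, by contrast, stays entirely at the level of the weak formulation and applies to any weak solution directly, at the cost of the cutoff and the quantitative control of mass near $E^\perp\cap\sphere$.

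Both approaches rest on the same structural identity, $\PE VB^\top=\mu_1\PE$, i.e., on the symmetry assumption in \ref{asm:VBTsymmetric}.
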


\section{Numerical experiments}
\label{sec:numericalexperiments}

We provide numerical experiments that validate Theorem~\ref{thm:main}, and investigate the long-time dynamics beyond the non-asymptotic concentration regime.
In particular, we observe the predicted concentration near the dominant eigenspace $E$ of $VB^\top$ up to time scales of order $\log\beta$, followed by a termial phase, in which tokens concentrate near a dominant eigenspace of the value matrix $V$, consistent with Conjecture~\ref{conj:v_max(V)}.

For our simulations, we use an explicit Euler discretization of \eqref{eq:SAdynamics} with timestep $\Delta t=0.01$.\footnote{We employ the \texttt{CBX} library \cite{bailo2024cbx} to run our normalized self-attention dynamics with suitable objective and kernel function.}
The remaining parameters are specified individually for each experiment.
\begin{figure}[h]%
\centering
\def\hp{\hphantom{----}}%
\hp{}\begin{subfigure}{.22\textwidth}
\includegraphics[width=\linewidth,trim={2.5cm 2.5cm 2.5cm 2.5cm}, clip]{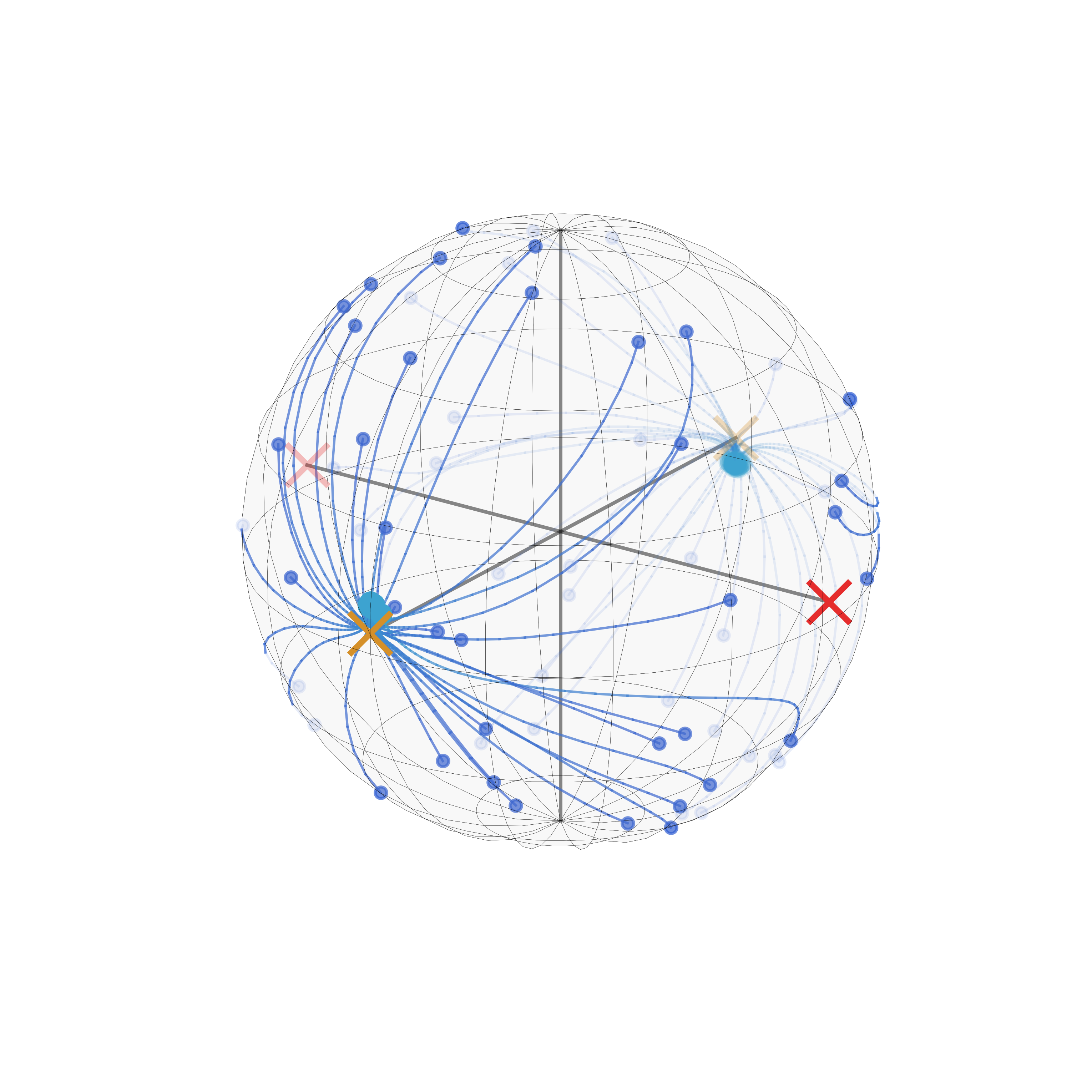}
\caption{$t\in [0,4)$}%
\end{subfigure}\hfill%
\begin{subfigure}{.22\textwidth}%
\includegraphics[width=\linewidth, trim={2.5cm 2.5cm 2.5cm 2.5cm}, clip]{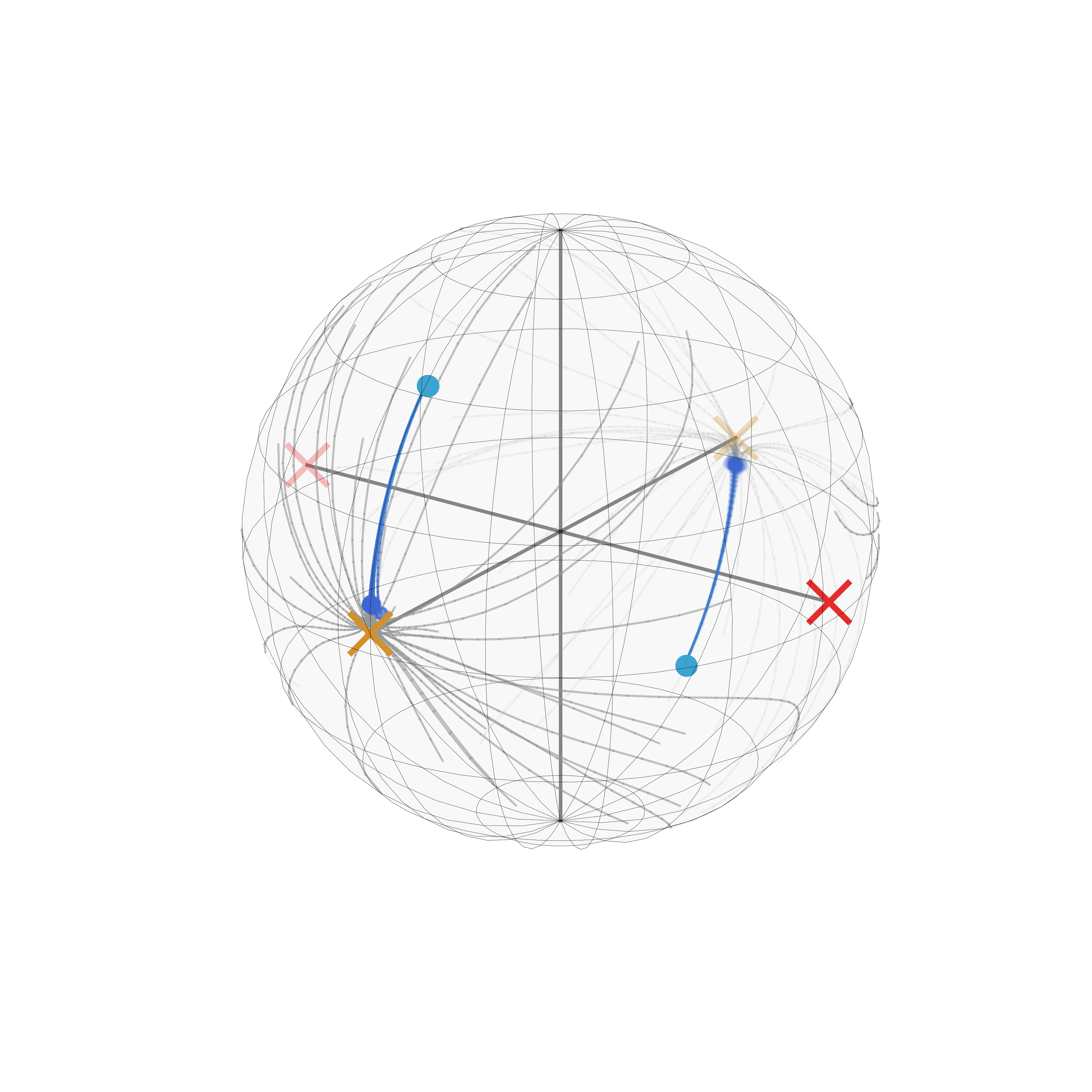}\hspace{2.5em}%
\caption{$t\in [4,9)$}%
\end{subfigure}\hfill%
\begin{subfigure}{.22\textwidth}%
\includegraphics[width=\linewidth, trim={2.5cm 2.5cm 2.5cm 2.5cm}, clip]{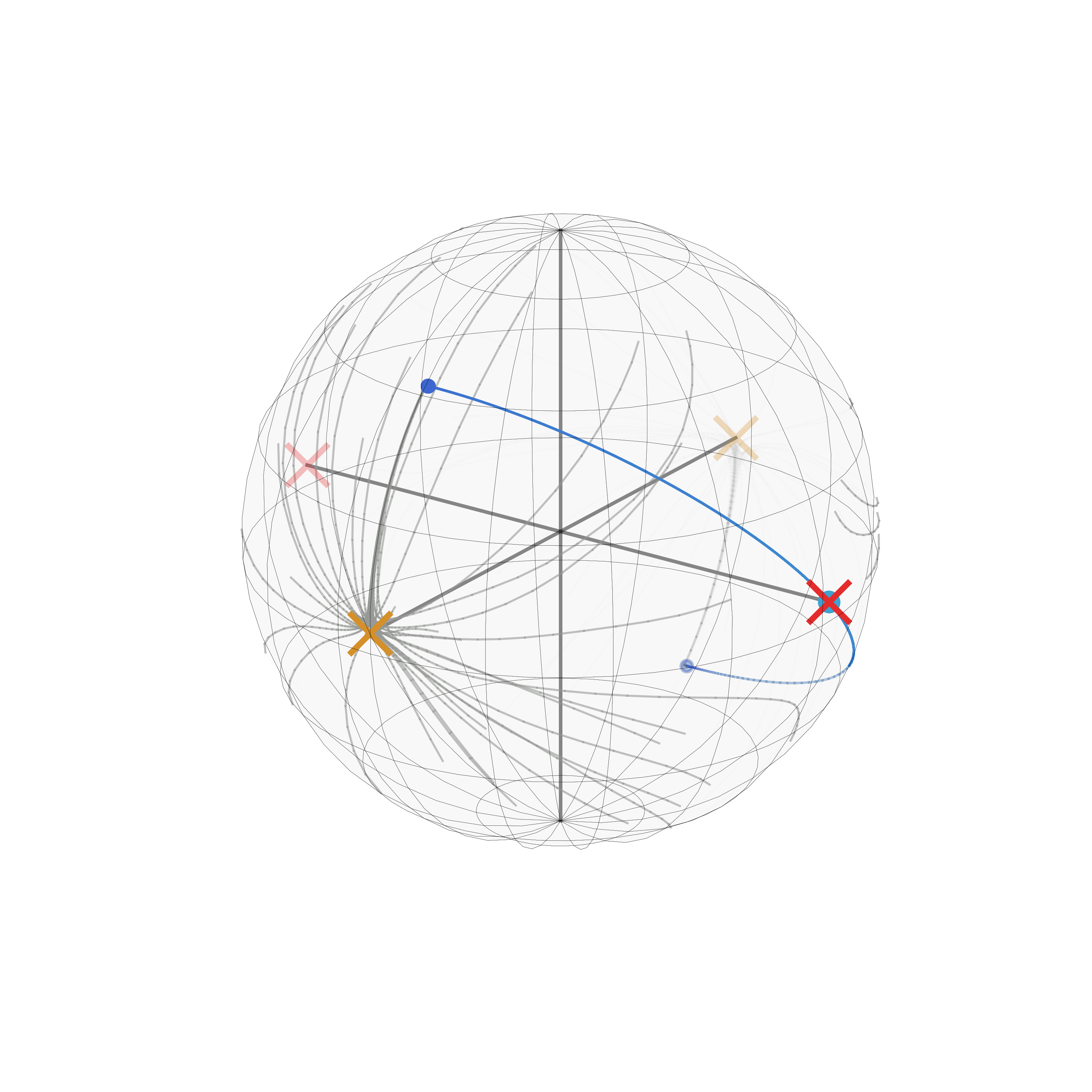}%
\caption{$t\in [9,20)$}%
\end{subfigure}\hp{}%
\caption{Dynamics of $n = 200$ tokens evolving according to \eqref{eq:SAdynamics} with $\beta = 100$. 
We show the dominant eigenspace of $VB^\top$ ({\color{myorange}orange crosses}) and the dominant eigenspace $F$ of $V$ ({\color{myred}red crosses}).\label{fig:twoscales}}
\end{figure}%
We first fix $\beta = 100$, $d = 3$, $n = 200$ tokens, and parameter matrices $V = \operatorname{diag}(-1,1,-2)$ and $B = \operatorname{diag}(-1,-1,1)$ such that $VB^\top = \operatorname{diag}(1, -1, -2)$. 
In Figure~\ref{fig:twoscales}, we depict the dynamics until a time horizon $T\approx 20$. Initially, we observe concentration on the $e_1$-direction which corresponds to the dominant eigenspace of $VB^\top$. 
On a second time scale, tokens align with the $e_2$-direction, corresponding to $F$, the dominant eigenspace of $V$, in line with Conjecture \ref{conj:v_max(V)}. Figure~\ref{fig:alignment} quantitatively compares the mean-field dynamics for moderate, large, and infinite inverse temperature~$\beta$. For moderate $\beta$, the solution first approaches the projection $\Pi_\sharp \rho_0$ associated with the dominant eigenspace $E$ of $VB^\top$, as indicated by the decrease of the Wasserstein distance and the transient growth of the alignment with $E$, as predicted by Theorem \ref{thm:main}. After a time of order $\log \beta$, this metastable behavior gives way to the second time scale, on which the solution aligns instead with the dominant eigenspace $F$ of $V$. As shown in panel (b) of Figure~\ref{fig:alignment}, increasing $\beta$ delays this transition, while in the limiting case $\beta=\infty$, the second phase is absent by
Corollary~\ref{cor:exponential_convergence} and the dynamics remain aligned with~$E$. We point to Appendix \ref{app:numericalexperiments} for parameter choices of $B$ and $V$ that instead lead to concentration on $F^{\mathrm{abs}}$, the in magnitude dominant eigenspace of $V$.
\begin{figure}[h]%
\centering
\includegraphics[width=0.85\textwidth,trim=0cm 0.3cm 0cm 0.2cm,clip]{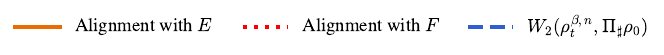}

\begin{subfigure}{.33\textwidth}
\includegraphics[width=\textwidth]{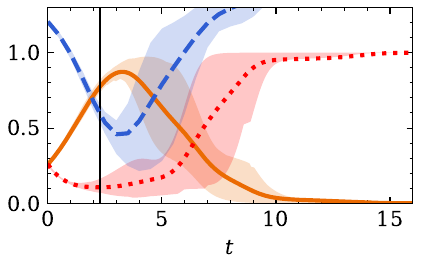}
\caption{$\beta=10$}
\end{subfigure}\hfill%
\begin{subfigure}{.33\textwidth}%
\includegraphics[width=\textwidth]{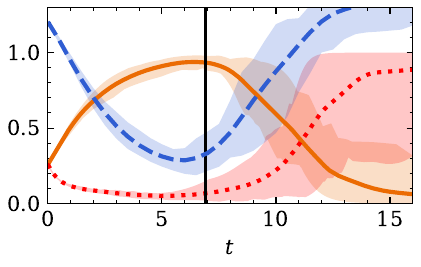}
\caption{$\beta=10^3$}
\end{subfigure}\hfill%
\begin{subfigure}{.33\textwidth}%
\includegraphics[width=\textwidth]{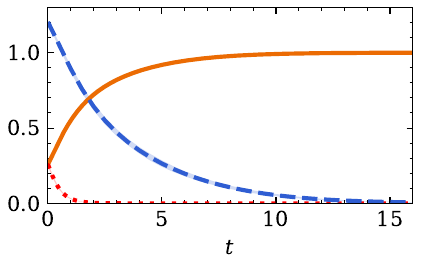}
\caption{$\beta=\infty$}
\end{subfigure}%
\caption{%
\label{fig:alignment}%
Alignment of $\rho_t^{\beta,n}$ with the dominant eigenspaces $E$ of $VB^\top$, and $F$ of $V$ for $d=10$ (maximum alignment corresponds to a value of $1$). The blue curve shows the Wasserstein distance quantified in Theorem \ref{thm:main}, and the vertical line marks $t=\log\beta$. 
The diagonal matrices $V$, $B$ are fixed across trials, with normally distributed entries. 
Curves show the mean over $20$ runs with $n=500$ tokens sampled from the uniform distribution $\rho_0$; shaded regions show empirical $0.10$--$0.90$ quantile.}
\end{figure}%
\section{Concluding remarks}
\label{sec:conclusions}
We provided a quantitative analysis of token concentration phenomena in mean-field transformers, showing that their zero-temperature limit~$\beta^{-1}\to0$ is accurate up to time scales of order $\log \beta$ and identifying the corresponding limiting distribution as a projection of the initial distribution $\rho_0$ onto the dominant eigenspace of the matrix~$VB^\top$ together with convergence rates thereto.
In addition, we provided experimental and theoretical evidence for a terminal phase in which the dynamics align with a dominant eigenspace of the value matrix $V$. 

\paragraph{Extensions and limitations.}
There are a few limitations of our work which constitute important directions to address in future work.
While the symmetry assumption on the matrix $VB^\top$ enables us to characterize the stationary distribution of the zero-temperature continuity equation, it restricts the dynamics compared to the general case where no stationary distribution might exist, cf.\@ \cite{bruno2025multiscale}.
Analyzing the regime of finitely many layers and tokens, i.e., time and space discretizations of~\eqref{eq:SAdynamics},
investigating multi-head attention, where an average over multiple attention heads precedes the projection in \eqref{eq:SAdynamics},
and enriching the dynamics by adding noise~\cite{balasubramanian2025structure,rigollet2025mean,shalova2026solutions} or a multi-layer perceptron~\cite{alvarez2026perceptrons},
are further important generalizations of our work. Besides this, a systematic understanding of Conjecture~\ref{conj:v_max(V)} and the quantification of the convergence in the terminal phase observed numerically, as well as its possible connections to the Oja flow of the value matrix $V$, see~\cite{altafini2026multistability}, are also of interest.

\begin{ack}
AA was funded by the European Union's Horizon Europe MSCA project ModConFlex (grant number 101073558).
LB acknowledges funding by the Deutsche Forschungsgemeinschaft (DFG, German Research
Foundation) – project number 544579844 (GeoMAR) within DFG-SPP 2298 ``Theoretical Foundations of Deep Learning'' and by the COST Action InterCoML, CA24136, supported by COST (European Cooperation in Science and Technology).
LB and TR acknowledge funding by the German Ministry of  Research, Technology and Space (BMFTR) under grant agreement No. 01IS24072A (COMFORT).
TR acknowledges the support of the Munich Center for Machine Learning and the ERC Advanced Grant NEITALG, grant agreement No. 101198055.
KR was supported by the grant ``DMS-EPSRC: Asymptotic Analysis of Online Training Algorithms in Machine Learning: Recurrent, Graphical, and Deep Neural Networks'' (NSF DMS-2311500).

For the purpose of Open Access, the authors have applied a CC BY public copyright license to any Author Accepted Manuscript (AAM) version arising from this submission.
\end{ack}

\newcommand{\FundingLogos}{%
\raisebox{0pt}{\includegraphics[height=1.5cm]{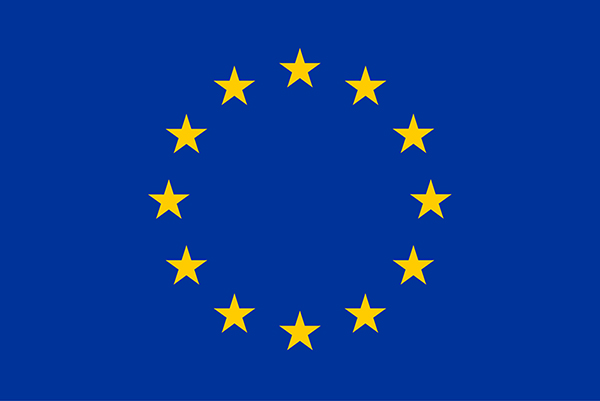}}%
\hspace{1em}%
\raisebox{0pt}{\includegraphics[height=1.5cm]{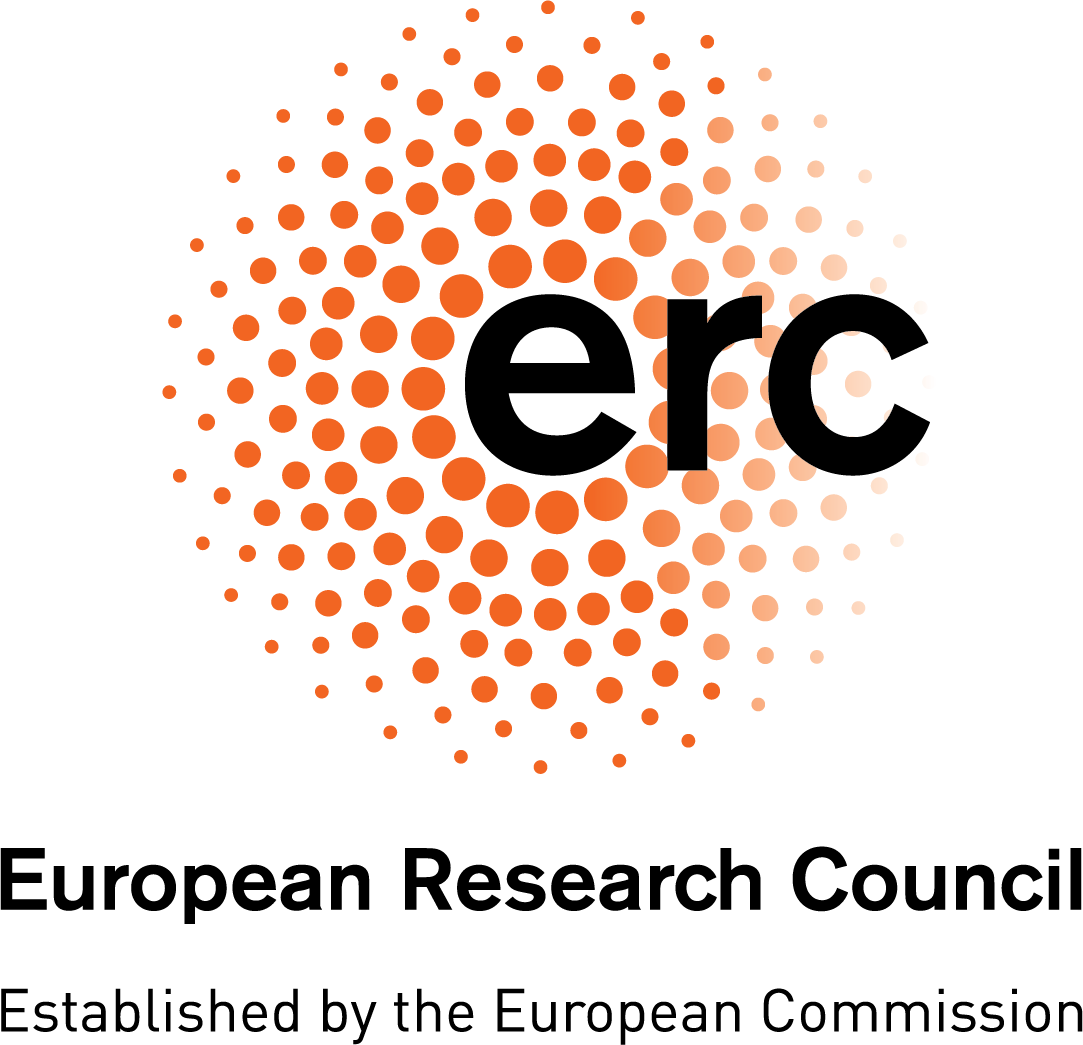}}%
}

\begin{center}
\FundingLogos
  
\vspace{0.5em}
\begin{tcolorbox}\centering\small
Funded by the European Union. Views and opinions expressed are however those of the author(s) only and do not necessarily reflect those of the European Union or the European Research Council Executive Agency. Neither the European Union nor the granting authority can be held responsible for them. This project has received funding from the European Research Council (ERC) under the European Union’s Horizon Europe research and innovation programme (grant agreement No. 101198055, project acronym NEITALG).
\end{tcolorbox}
\end{center}

\medskip
\bibliographystyle{abbrv}
\bibliography{refs.bib}

\newpage
\appendix

\section*{Supplementary material}

This supplementary material is organized into the following appendices.

\startcontents[main]
\printcontents[main]{}{1}{}

\section{Proof details for the main result, Theorem~\ref{thm:main}}
\label{app:proofs}

In this appendix, we present auxiliary technical results for our main theoretical result, Theorem~\ref{thm:main}, together with their proof details.

\subsection{Quantitative estimate for the Wasserstein distance between the flows \texorpdfstring{$\rho_t^\beta$}{rhot beta} and \texorpdfstring{$\rho_t$}{rhot}}
\label{app:sec:TERM_1}

The objective of this section is to derive a bound on $W_2(\rho_t^\beta,\rho_t)$ in terms of the temperature parameter $\beta$ to prove Proposition~\ref{prop:difference_continuity_eq}.

Our proof is based on a stability estimate in Wasserstein space, which we detail in Appendix~\ref{app:sec:stability}. It employs the quantitative Laplace principle from Lemma~\ref{lem:LaplacePrinciple}, see also \cite{fornasier2024consensus}, to control the discrepancy between the vector fields of the continuity equations~\eqref{eq:SAdynamics_meanfield} and \eqref{eq:zerotemp_meanfield}. The proof of the quantitative Laplace principle is provided in Appendix~\ref{app:sec:Laplace}.

\subsubsection{A quantitative Laplace principle}
\label{app:sec:Laplace}

We first give a proof of the quantitative Laplace principle on the sphere $\sphere$, Lemma~\ref{lem:LaplacePrinciple},
which was originally introduced in \cite{fornasier2024consensus} on the plane. Here we extend it to the sphere and specialize it to a certain linear function on the sphere.

\begin{proof}[Proof of Lemma~\ref{lem:LaplacePrinciple}]
    The proof follows the lines of \cite[Proposition~4.5]{fornasier2024consensus}, adapted to the sphere $\sphere$.

    To simplify notation, let us define for $x\in \sphere$ the non-negative functions
    \begin{align*}
        \mathcal E_x(y)
        \coloneqq \left\langle B^\top x, y^*(x)\right\rangle-\left\langle B^\top x,y\right\rangle
        = \N{B^\top x}\bigl(1-\left\langle y^*(x),y\right\rangle\bigr),
        \qquad
        y\in\sphere,
    \end{align*}
    where we recall from \eqref{eq:y*} that $y^*(x)=\frac{B^\top x}{\N{B^\top x}}$.
    With this definition,
    we have the identity $e^{\beta \ip{B^\top x}{y}} =e^{\beta \|B^\top x \|} e^{-\beta \mathcal{E}_x(y)}$. 
    Using this, we obtain
    \begin{align*}
        m_{\beta, \varrho}(x) 
        = \frac{\int_{\sphere} e^{ - \beta \mathcal{E}_x(y)} y \de\varrho (y)}{\int_{\sphere} e^{- \beta \mathcal{E}_x(y)} \de\varrho (y)},
    \end{align*}
    where we canceled $e^{\beta \|B^\top x \|}$ in the numerator and denominator.
    Using that we are on the sphere $\sphere$, where $\|y - y^*(x)\|^2 = \|y\|^2 - 2\langle y^*(x),y\rangle + \|y^*(x)\|^2 = 2 - 2\langle y^*(x),y\rangle$,
    we notice that $\mathcal{E}_x(y) = \frac{\|B^\top x\|}{2} \|y - y^*(x)\|^2$.
    With this we directly observe that
    \begin{align*}
        \mathcal E_{x,r}\coloneqq \sup_{y\in \mathcal{B}_r(y^*(x))}\mathcal E_x(y) = \frac{\|B^\top x\|}{2} r^2.
    \end{align*}
    The quantity $Z_{\beta}(x)\coloneqq \int_{\sphere} e^{-\beta \mathcal E_x(y)}\de\varrho(y)$
    can be lower bounded by
    \begin{equation}\label{eq:Zbeta}
        Z_{\beta}(x) \geq \int_{\mathcal{B}_r(y^*(x))} e^{-\beta \mathcal E_x(y)}\de\varrho(y) \geq \inf_{y\in \mathcal{B}_r(y^*(x))} e^{-\beta \mathcal E_x(y)} \varrho(\mathcal{B}_r(y^*(x))) = e^{-\beta \mathcal{E}_{x,r}} \varrho(\mathcal{B}_r(y^*(x))),
    \end{equation}
    where the first inequality is due to the integrand being positive.
    Now let $\tilde r\ge r>0$.
    We can decompose \eqref{eq:QL-sphere} into contributions from inside and outside of $\mathcal{B}_{\tilde r}(y^*(x))$.
    By Jensen's inequality we have
    \begin{align}
    &\|m_{\beta,\varrho}(x)-y^*(x)\|\nonumber\\
    &\qquad\,\le
    \int_{\mathcal{B}_{\tilde r}(y^*(x))}
    \|y-y^*(x)\|\,\frac{e^{-\beta\mathcal E_x(y)}}{Z_\beta(x)}\de\varrho(y)
    +
    \int_{\mathcal{B}_{\tilde r}(y^*(x))^c}
    \|y-y^*(x)\|\,\frac{e^{-\beta\mathcal E_x(y)}}{Z_\beta(x)}\de\varrho(y).
    \label{eq:split}
    \end{align}
    Inside $\mathcal{B}_{\tilde r}(y^*(x))$ we have $\|y-y^*(x)\|\le \tilde r$,
    which allows to bound the first term in \eqref{eq:split} by $\tilde r$ since the integrand is nonnegative and
    $\int_{\mathcal{B}_{\tilde r}(y^*(x))} \frac{e^{-\beta\mathcal E_x(y)}}{Z_\beta(x)}\de\varrho(y)\le 1$.
    For the second term in \eqref{eq:split}, we use the lower bound \eqref{eq:Zbeta}, which gives
    \begin{align}
    &\int_{\mathcal{B}_{\tilde r}(y^*(x))^c}
    \|y-y^*(x)\|\,\frac{e^{-\beta\mathcal E_x(y)}}{Z_\beta(x)}\de\varrho(y)\nonumber\\
    &\qquad\,\le
    \frac{e^{\beta \mathcal E_{x,r}}}{\varrho(\mathcal{B}_r(y^*(x)))}
    \int_{\mathcal{B}_{\tilde r}(y^*(x))^c}
    \|y-y^*(x)\|\,e^{-\beta\mathcal E_x(y)}\de\varrho(y) \label{eq:tail-1}\\ 
    &\qquad\,\le \frac{2}{\varrho(\mathcal{B}_r(y^*(x)))} \exp\left(-\beta\left(\inf_{u\in \mathcal{B}_{\tilde r}(y^*(x))^c}\mathcal E_x(u)-\mathcal E_{x,r}\right)\right). \nonumber
    \end{align}
    To obtain the last inequality, we first note that $\|y-y^*(x)\|\le 2$ since $y, y^*(x)\in\sphere$.
    Secondly, inside $\mathcal{B}_{\tilde r}(y^*(x))^c$ we have $\mathcal E_x(y)\ge \inf_{u\in \mathcal{B}_{\tilde r}(y^*(x))^c}\mathcal E_x(u)$, or equivalently after taking the exponential $e^{-\beta\mathcal E_x(y)} \le e^{-\beta\inf_{u\in \mathcal{B}_{\tilde r}(y^*(x))^c}\mathcal E_x(u)}$.
    And thirdly, $\varrho\big(\mathcal{B}_{\tilde r}(y^*(x))^c\big)\leq1$.
    Combining \eqref{eq:tail-1} with the observation directly before allows us to conclude \eqref{eq:split} with the bound
    \begin{align}
    &\|m_{\beta,\varrho}(x)-y^*(x)\|
    \le
    \tilde r + \frac{2}{\varrho(\mathcal{B}_r(y^*(x)))} \exp\left(-\beta\left(\inf_{u\in \mathcal{B}_{\tilde r}(y^*(x))^c}\mathcal E_x(u)-\mathcal E_{x,r}\right)\right).
    \label{eq:split_mid}
    \end{align}
    Now, since $\inf_{u\in \mathcal{B}_{\tilde r}(y^*(x))^c}\mathcal E_x(u) = \inf_{u\in \mathcal{B}_{\tilde r}(y^*(x))^c} \frac{\|B^\top x\|}{2} \|u - y^*(x)\|^2 = \frac{\|B^\top x\|}{2}\tilde r^2$, we choose $\tilde r$ such that
    \begin{align*}
    \inf_{u\in \mathcal{B}_{\tilde r}(y^*(x))^c}\mathcal E_x(u)-\mathcal E_{x,r} = \frac{\|B^\top x\|}{2}(\tilde r^2 - r^2) = q,
    \end{align*}
    i.e., $\tilde r = \sqrt{\frac{2}{\|B^\top x\|} q + r^2}$.
    With this, \eqref{eq:tail-1} is bounded by $\frac{2e^{-\beta q}}{\varrho(\mathcal{B}_r(y^*(x)))}$
    and we obtain for \eqref{eq:split_mid} the upper bound
    \begin{align*}
    &\|m_{\beta,\varrho}(x)-y^*(x)\|
    \le
    \tilde r + \frac{2e^{-\beta q}}{\varrho(\mathcal{B}_r(y^*(x)))}.
    \end{align*}
    Noticing that $\|B^\top x\| \geq \sigma_{\min}(B)$ gives \eqref{eq:QL-sphere}, which concludes the proof.
\end{proof}

In order to employ Lemma~\ref{lem:LaplacePrinciple},
we require a lower bound for the mass $\rho_t^\beta\left(\mathcal{B}_r(y^*(x))\right)$ which is uniform for finite time intervals $[0,T]$. Under Assumption~\ref{asm:lower_bound}, which guarantees that the initial distribution $\rho_0^\beta$ has a positive density with respect to the surface measure~$\mathcal H^{d-1}\restr\sphere$ on $\sphere$, this is the case and the following result holds for $\rho_t^\beta$ for all $t\in[0,T]$, see \cite{bruno2025multiscale}.

\begin{lemma}\label{lem:density}
    Assume that the matrix $B$ satisfies \ref{asm:Binvertible}.
    Let $(\rho_t^\beta)_{t\in[0,T]}$ denote a weak solution to the continuity equation~\eqref{eq:SAdynamics_meanfield}.
    Assume that $\rho_0^\beta$ satisfies Assumption~\ref{asm:lower_bound}.
    Then, there exists a constant $C_0=C_0(d,\sigma_{\max}(B),\sigma_{\max}(V))>0$ such that the density $f_t^\beta$ of $\rho_t^\beta$ satisfies
    \begin{equation*}       
        \min_{x\in\sphere}f_t^\beta(x)
        \geq \frac12 \ell_0 e^{-C_0 t}
    \end{equation*}
    for all $t\in[0,T]$.
\end{lemma}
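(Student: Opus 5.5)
Along characteristics, the density obeys a transport equation whose decay rate is governed by the divergence of the velocity field. So the plan is to follow the density along the flow and bound that divergence uniformly in $\beta$ and in the measure.

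Write the velocity field as $v_t^\beta(x)=\proj{x}\bigl(V m^V_{\beta,\rho_t^\beta}(x)\bigr)$, where
\begin{equation*}
m^V_{\beta,\varrho}(x)=\int_{\sphere} \frac{e^{\beta\ip{x}{By}}\, y}{\int_{\sphere} e^{\beta\ip{x}{Bz}}\de\varrho(z)}\de\varrho(y).
\end{equation*}
Let $X_t$ be the flow map of $v^\beta_t$, so that $\rho_t^\beta=(X_t)_\sharp\rho_0$. Along characteristics the density satisfies
\begin{equation*}
\tfrac{\mathrm d}{\mathrm dt} f_t^\beta(X_t(x)) = -\div v_t^\beta(X_t(x))\, f_t^\beta(X_t(x)),
\end{equation*}
hence
\begin{equation*}
f_t^\beta(X_t(x))=f_0(x)\exp\Bigl(-\int_0^t \div v_s^\beta(X_s(x))\de s\Bigr)\ge \ell_0 \exp\Bigl(-\int_0^t \|\div v^\beta_s\|_\infty \de s\Bigr).
\end{equation*}
Since each $X_t$ is a diffeomorphism of $\sphere$, this bounds $\min_x f_t^\beta$. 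The factor $\tfrac12$ in the statement is harmless slack; I would carry it along from a regularization argument, approximating $\rho_0$ and passing to the limit.

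The key estimate is a bound on the divergence. Split
\begin{equation*}
\div(\proj{x} w(x)) = \operatorname{tr}\bigl(\proj{x}\nabla w(x)\proj{x}\bigr) - (d-1)\ip{x}{w(x)}.
\end{equation*}
The second term is at most $(d-1)\sigma_{\max}(V)$, because $m^V$ is a convex combination of points on $\sphere$. For the first term, differentiate $m^V_{\beta,\varrho}$ in $x$. This gives $\beta$ times a covariance of $y$ under the Gibbs weights, multiplied by $B$, so $\nabla w = \beta\, V\,\mathrm{Cov}\,B^\top$ up to transposes.

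The naive bound is $\|\nabla w\|\le \beta\sigma_{\max}(V)\sigma_{\max}(B)\cdot 2$. That would make $C_0$ depend on $\beta$, whereas the claimed $C_0$ does not. This $\beta$-uniformity is the main obstacle.

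My first attempt at it would use the sign structure of the divergence. The problematic term is $\beta\operatorname{tr}(\proj{x} V\,\mathrm{Cov}_x\, B^\top\proj{x})$. It only needs to be bounded from above (a lower bound on $-\div$), since only large positive divergence decreases the density. Without symmetry assumptions this trace has no sign, though, so I would instead follow \cite{bruno2025multiscale}. That work shows, via the lower density bound itself, that the Gibbs measure concentrates, giving $\beta\,\mathrm{Cov}_x = O(1)$ when $\varrho$ has density at least $\ell$. This is the Laplace asymptotics: the covariance of a measure with density bounded below and weight $e^{-\beta\|B^\top x\|\|y-y^*\|^2/2}$ scales like $1/\beta$. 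There is a tail term $e^{-\beta q}/\ell$ that must stay controlled.

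This leads to a bootstrap on the set of times where $\min f_s^\beta\ge \tfrac12\ell_0e^{-C_0 s}$. On that set the divergence is bounded by a constant $C(d,\sigma_{\max}(B),\sigma_{\max}(V))$ plus exponentially small terms. These give $f_t\ge \ell_0 e^{-Ct}(1-o(1))$, which is strictly better than the bootstrap hypothesis, so a continuity argument closes it on $[0,T]$. In that argument the $\tfrac12$ absorbs the $o(1)$ corrections. If that route fails for small $\beta$, I would simply cite the density bound from \cite{bruno2025multiscale} directly, as the statement indicates.
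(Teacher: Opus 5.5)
The paper gives no argument of its own for this lemma; it is imported from \cite[Lemma A.10]{bruno2025multiscale}, so your last sentence is the paper's proof. The self-contained route you sketch before that has a genuine gap, and the gap is at large $\beta$, not small $\beta$. For $\beta\le\beta_0$ your naive bound $|\div v^\beta_t|\le(d-1)\sigma_{\max}(V)(2\beta_0\sigma_{\max}(B)+1)$ already suffices, so your fallback is triggered by the wrong regime. Your characteristics formula is fine, and so is the splitting $\div(\proj{x}w)=\operatorname{tr}(\proj{x}\nabla w\,\proj{x})-(d-1)\ip{x}{w}$ with $\nabla w=\beta V\mathrm{Cov}_xB^\top$.

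The false step is the claim that a lower bound $f\ge\ell$ alone gives $\beta\,\mathrm{Cov}_x=O(1)$. Here is a counterexample. Take $\varrho=\ell\,\mathcal H^{d-1}\restr\sphere$ plus a bump of mass $M$ and width $\ll\beta^{-1/2}$ at $y_0$. Choose $x$ with $\|y^*(x)-y_0\|^2\approx(d-1)\log\beta/(\beta\|B^\top x\|)$. Then the Gibbs weight of the $\beta^{-1/2}$-cap around $y^*(x)$ is about $\ell\beta^{-(d-1)/2}$, and the weight of the bump is about $Me^{-\beta\|B^\top x\|\|y^*(x)-y_0\|^2/2}$. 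These are comparable, so $\beta\|\mathrm{Cov}_x\|\gtrsim\log\beta$.

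This is also all that the argument of Lemma~\ref{lem:LaplacePrinciple} yields from $\varrho(\mathcal B_r(y^*))\ge c_d\ell r^{d-1}$. With $r=\beta^{-1/2}$, making the tail term $e^{-\beta q}/(\ell r^{d-1})$ of order $1/\beta$ forces $\beta q\approx\frac{d+1}{2}\log\beta+\log\frac1\ell$. So the tail terms are only polynomially small, not exponentially small, and one gets $\beta\,\mathbb E\|y-y^*\|^2\lesssim(\log\beta+\log\frac1\ell)/\sigma_{\min}(B)$. Inserted into your bootstrap, this gives a lower bound that degenerates as $\beta\to\infty$ (like $\beta^{-Ct}$ or worse), so the continuity argument does not close.

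The effect is real and not an artifact of the estimate. For $V=B=\mathrm{Id}$ the divergence is $\beta\operatorname{tr}(\proj{x}\mathrm{Cov}_x\proj{x})-(d-1)\ip{x}{m}$. It is small only through a cancellation between two $O(1)$ terms, and that cancellation occurs only if the density is nearly constant on the scale $\sqrt{\log\beta/\beta}$ around $y^*(x)=x$.

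What is missing is control of $\rho^\beta_t$ from above, or more precisely of its regularity on that scale, propagated along the flow. This is presumably why Assumption~\ref{asm:lower_bound} asks for a Lipschitz density; the paper never uses the Lipschitz constant except through the cited lemma. Even this is delicate. With $\ell\le f\le L$ one gets $\beta\|\mathrm{Cov}_x\|\lesssim(L/\ell)/\sigma_{\min}(B)$. A naive joint bootstrap for $R_t=\sup f_t/\inf f_t$ then gives $\dot R\lesssim R(1+R)$, which closes only on short time intervals.

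Finally, no argument can produce a $\beta$-uniform rate free of $\sigma_{\min}(B)$, as your constant $C(d,\sigma_{\max}(B),\sigma_{\max}(V))$ would be. For smooth densities, $\beta\,\mathrm{Cov}_x\approx\|B^\top x\|^{-1}$ times the tangent projection at $y^*(x)$. Concretely, take $d=2$, $V=\mathrm{Id}$, $B=\operatorname{diag}(1,\eps)$. Already the limit field $v_\infty$ has divergence $(1-\eps)/\eps$ at its fixed point $e_2$, so the limiting density there decays like $e^{-(1-\eps)t/\eps}$. Any $\beta$-uniform $C_0$ must therefore depend on $\sigma_{\min}(B)$, consistent with the dependence listed in Theorem~\ref{thm:main}.
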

\begin{proof}
    This result follows from \cite[Lemma A.10]{bruno2025multiscale}.
\end{proof}

\subsubsection{Stability estimates in Wasserstein space}
\label{app:sec:stability}

We now have available all necessary technical tools to provide a proof of Proposition~\ref{prop:difference_continuity_eq}.
\begin{proof}[Proof of Proposition~\ref{prop:difference_continuity_eq}]
    To keep the notation concise,
    let us write
    \begin{align*}
        v_\beta[\rho_t^\beta](x)
        \coloneqq
        \proj{x} \big(V m_{\beta, \rho_t^\beta}(x) \big)
        \quad\text{ with }\quad
        m_{\beta, \rho_t^\beta}(x)
        \coloneqq
        \int_{\sphere} \frac{\exp\left(\beta \ip{x}{By}\right)}{\int_{\sphere} \exp\left(\beta \ip{x}{Bz}\right)\de\rho_t^\beta(z)} y \de \rho_t^\beta(y)
    \end{align*}
    as well as
    \begin{align*}
        v_\infty(x)
        \coloneqq
        \proj{x}\left(Vy^*(x)\right)
        \quad\text{ with }\quad
        y^*(x)
        \coloneqq
        \frac{B^\top x}{\N{B^\top x}}
    \end{align*}
    for the vector fields of the continuity equations~\eqref{eq:SAdynamics_meanfield} and \eqref{eq:zerotemp_meanfield}, which can be written compactly with the above definitions as
    $\partial_t \rho_t^\beta = - \div\big( \rho_t^\beta v_\beta[\rho_t^\beta] \big)$ 
    and $\partial_t\rho_t=-\div\left(\rho_t v_\infty\right)$, respectively.

    Denoting by $\pi_t\in\Pi(\rho_t^\beta, \rho_t)$ the optimal coupling between $\rho_t^\beta$ and $\rho_t$ for the Wasserstein distance,
    we differentiate the squared Wasserstein distance along the solutions to the associated continuity equations~\cite[Theorem~8.4.7]{savare2008gradientflows} to obtain for almost every $t\in[0,T]$ that
    \begin{align*}
    \begin{aligned}
        \frac{\de}{\de t} W_2^2(\rho_t^\beta,\rho_t)
        &= \frac{\de}{\de t}  \iint_{\sphere\times\sphere} \|x-z\|^2 \de\pi_t(x,z) \\
        &= 2 \iint_{\sphere\times\sphere} \big\langle x-z, v_\beta[\rho_t^\beta](x)-v_\infty(z)\big\rangle \de\pi_t(x,z) \\
        &\leq 2 W_2(\rho_t^\beta,\rho_t) \sqrt{\iint_{\sphere\times\sphere} \big\|v_\beta[\rho_t^\beta](x)-v_\infty(z)\big\|^2 \de\pi_t(x,z)},
    \end{aligned}
    \end{align*}
    where we used the Cauchy--Schwarz inequality to obtain the last step. Since by the chain rule it holds that $\frac{\de}{\de t} W_2^2(\rho_t^\beta,\rho_t) = 2W_2(\rho_t^\beta,\rho_t) \frac{\de}{\de t} W_2(\rho_t^\beta,\rho_t)$,
    we can divide both sides by $2W_2(\rho_t^\beta,\rho_t)$ and get
    \begin{align}
        \label{eq:app:proof:prop:difference_continuity_eq:10}
    \begin{aligned}
        \frac{\de}{\de t} W_2(\rho_t^\beta,\rho_t)
        &\leq \sqrt{\iint_{\sphere\times\sphere} \big\|v_\beta[\rho_t^\beta](x)-v_\infty(z)\big\|^2 \de\pi_t(x,z)}.
    \end{aligned}
    \end{align}
    Let us now estimate the integrand in \eqref{eq:app:proof:prop:difference_continuity_eq:10}.
    For this, we decompose this term into an approximation error due to the finite inverse temperature $\beta$, and a stability term.
    More precisely, with the triangle inequality we have for all $x,z\in\sphere$ that
    \begin{align}
        \label{eq:app:proof:prop:difference_continuity_eq:20}
        \big\|v_\beta[\rho_t^\beta](x)-v_\infty(z)\big\|
        \leq \big\|v_\beta[\rho_t^\beta](x)-v_\infty(x)\big\| + \big\|v_\infty(x)-v_\infty(z)\big\|.
    \end{align}
    To estimate the approximation error, i.e., the first term on the right-hand side of \eqref{eq:app:proof:prop:difference_continuity_eq:20},
    we note after recalling $\|\proj{x}\|\le 1$, that
    \begin{align}
    \label{eq:app:proof:prop:difference_continuity_eq:20_aux1_prelim}
    \begin{aligned}
        \big\|v_\beta[\rho_t^\beta](x)-v_\infty(x)\big\|
        &= \big\| \proj{x} \big(V m_{\beta, \rho_t^\beta}(x) \big) - \proj{x}\big(Vy^*(x)\big)\big\| \\
        &= \left\| \proj{x}\left(V \big(m_{\beta, \rho_t^\beta}(x) - y^*(x)\big)\right)\right\| \\
        &\leq \sigma_{\max}(V) \big\| m_{\beta, \rho_t^\beta}(x) - y^*(x)\big\|.
    \end{aligned}
    \end{align}
    We can now employ Lemma~\ref{lem:LaplacePrinciple} to estimate for arbitrary $r>0$ and $q>0$ that    
    \begin{align}
        \label{eq:app:proof:prop:difference_continuity_eq:20_aux1_prelim_aux1}
    \begin{aligned}
        \big\| m_{\beta, \rho_t^\beta}(x) - y^*(x)\big\|
        & \leq
       \sqrt{r^2+\frac{2q}{\sigma_{\min}(B)}} + \frac{2e^{-\beta q}}{\rho_t^\beta \bigl(\mathcal{B}_r(y^*(x))\bigr)}.
    \end{aligned}
    \end{align}
    To estimate the second summand on the right-hand side,
    we first note that for every spherical cap $\mathcal B_r(y)\subset \mathbb S^{d-1}$ with $y\in\sphere$ it holds
    \begin{align*}
        \rho_t^\beta(\mathcal B_r(y))
        &= \int_{\mathcal B_r(y)} f_t^\beta(z)\de\mathcal H^{d-1}(z)
        \geq \left(\inf_{z\in \mathcal B_r(y)} f_t^\beta(z)\right)\mathcal H^{d-1}(\mathcal B_r(y)) \\
        &\geq \frac12 \ell_0 e^{-C_0t}\,\mathcal H^{d-1}(\mathcal B_r(y))
    \end{align*}
    with the second line being thanks to Lemma~\ref{lem:density}.
    To bound $\mathcal H^{d-1}(\mathcal B_r(y))$, we observe that for $r\geq\sqrt{2}$ the spherical cap $\mathcal{B}_r(y)$ includes the halfsphere $\mathbb S^{d-1}_y:=\{x\in\sphere\,:\,\ip{x}{y}\geq 0\}$. 
    To see this, note that for $x\in\mathbb S^{d-1}_y$ we have 
    \begin{align*}
        \N{x-y}^2 = 2-2\ip{x}{y}\leq 2 \leq r^2
    \end{align*}
    and hence $x\in\mathcal{B}_r(y)$. In particular, the surface measure of $\mathcal{B}_r(y)$ with $r\geq\sqrt{2}$ can be bounded from below by half of the measure of the sphere $\sphere$, i.e., the number $C_d\coloneqq\nicefrac{\pi^{d/2}}{\Gamma\left(\nicefrac{d}{2}\right)}$ where $\Gamma$ denotes the Gamma function. Furthermore, for $r<\sqrt{2}$, there exists a dimensional constant $c_d>0$ such that $\mathcal{H}^{d-1}(\mathcal{B}_r(y))\geq c_dr^{d-1}$, which can be seen by spherical integration. Using the above, the surface measure of such a spherical cap can be bounded from below as
    \begin{equation*}
        \mathcal H^{d-1}(\mathcal B_r(y))
        \geq \begin{dcases}
            c_d r^{d-1},
            \quad
            &r < \sqrt2,\\
            C_d,
            \quad
            &r \geq \sqrt2,
        \end{dcases}
    \end{equation*}
    and we can continue \eqref{eq:app:proof:prop:difference_continuity_eq:20_aux1_prelim_aux1} by
    \begin{align}
        \label{eq:app:proof:prop:difference_continuity_eq:20_aux1_prelim_aux2}
    \begin{aligned}
        \big\| m_{\beta, \rho_t^\beta}(x) - y^*(x)\big\|
        & \leq \sqrt{r^2+\frac{2q}{\sigma_{\min}(B)}} + 
        \begin{dcases}
            \frac{4e^{-\beta q}}{\ell_0 e^{-C_0t} c_d r^{d-1}},
            \quad
            &r < \sqrt2,\\
            \frac{4e^{-\beta q}}{\ell_0 e^{-C_0t} C_d},
            \quad
            &r \geq \sqrt2,
        \end{dcases}
    \end{aligned}
    \end{align}
    for all $r>0$ and $q>0$. Let us now choose
    \begin{align*}
        q\coloneqq q_\beta \coloneqq \frac{d\log(\beta+1)}{2\beta}
        \quad\text{ and }\quad
        r\coloneqq r_\beta:=\sqrt{q_\beta}
    \end{align*}
    and notice that by definition $e^{-\beta q_\beta} = (\beta+1)^{-\frac{d}{2}}$. Using these definitions in \eqref{eq:app:proof:prop:difference_continuity_eq:20_aux1_prelim_aux2}, we get
    \begin{align*}
        & \big\| m_{\beta, \rho_t^\beta}(x) - y^*(x)\big\|
        \leq
        \sqrt{q_\beta\left(1+\frac{2}{\sigma_{\min}(B)}\right)} + 
        \begin{dcases}
            \frac{4(\beta+1)^{-\frac{d}{2}}}{\ell_0 e^{-C_0t} c_dr_\beta^{d-1}},
            \quad
            &r_\beta < \sqrt2,\\
            \frac{4(\beta+1)^{-\frac{d}{2}}}{\ell_0 e^{-C_0t} C_d},
            \quad
            &r_\beta \geq \sqrt2,
        \end{dcases} \\
        &\quad\quad\, \leq
        \sqrt{\frac{d\log(\beta+1)}{\beta}\left(1+\frac{2}{\sigma_{\min}(B)}\right)} + 
        \begin{dcases}
            \frac{4e^{C_0t}}{c_d\ell_0}\!\left(\frac{2\beta}{d\log(\beta+1)}\right)^{\frac{d-1}{2}}\!\!(\beta+1)^{-\frac{d}{2}},
            \quad
            &r_\beta < \sqrt2,\\
            \frac{4e^{C_0t}}{C_d\ell_0}(\beta+1)^{-\frac{d}{2}},
            \quad
            &r_\beta \geq \sqrt2,
        \end{dcases} \\
        &\quad\quad\, \leq C(d,\sigma_{\min}(B),\ell_0)\sqrt{\frac{\log(\beta+1)}{\beta}}e^{C_0t},
    \end{align*}
    where we used in the last step that $\frac{d\log(\beta+1)}{\beta}$ dominates both $\left(\frac{2\beta}{d\log(\beta+1)}\right)^{d-1}(\beta+1)^{-d}$ and $(\beta+1)^{-d}$ for all $\beta>0$, and that $e^{C_0t}\geq1$ for all $t\geq0$.
    Using this, we can conclude \eqref{eq:app:proof:prop:difference_continuity_eq:20_aux1_prelim} by
    \begin{align}
    \begin{aligned}
        \label{eq:app:proof:prop:difference_continuity_eq:20_aux1}
        \big\|v_\beta[\rho_t^\beta](x)-v_\infty(x)\big\|
        & \leq
        C(d,\sigma_{\min}(B),\sigma_{\max}(V),\ell_0)\sqrt{\frac{\log(\beta+1)}{\beta}}e^{C_0t}.
    \end{aligned}
    \end{align}
    For the stability term, i.e., the second term in \eqref{eq:app:proof:prop:difference_continuity_eq:20},
    we first notice that for all $x,z\in \sphere$ it holds by the triangle inequality and reverse triangle inequality that
    \begin{align*}
        \|y^*(x)-y^*(z)\|
        &=
        \left\|\frac{B^\top x}{\|B^\top x\|}-\frac{B^\top z}{\|B^\top z\|}\right\|
        \leq \frac{\|B^\top x - B^\top z\|}{\|B^\top x\|} + \|B^\top z\|\left\| \frac{1}{\|B^\top x\|} - \frac{1}{\|B^\top z\|} \right\|
        \\
        &= \frac{\|B^\top x - B^\top z\|}{\|B^\top x\|} + \left\| \frac{\|B^\top z\| - \|B^\top x\|}{\|B^\top x\|} \right\| 
        \leq 2 \frac{\|B^\top x - B^\top z\|}{\|B^\top x\|} \\ 
        &\leq
        2 \frac{\sigma_{\max}(B)}{\sigma_{\min}(B)}\,\|x-z\|, 
    \end{align*}
    where we have used that $B$ is invertible and hence $\|B^\top x\|\ge \sigma_{\min}(B)$ for all \(x\in\sphere\). 
    Moreover, one has for all $x,z\in\sphere$ that
    \begin{align*}
        \|\proj{x}-\proj{z}\|
        = \|x x^\top -z z^\top\|
        \leq \|x (x - z)^\top \| + \| (x - z) z^\top\| \leq 2 \|x - z\|.
    \end{align*}
    Using these two auxiliary estimates, and recalling that $\|\proj{x}\|\le 1$, $\|y^*(x)\|=1$, we obtain by the triangle inequality
    \begin{align}
    \begin{aligned}
        \label{eq:app:proof:prop:difference_continuity_eq:20_aux2}
        \big\|v_\infty(x)-v_\infty(z)\big\|
        &= \big\|\proj{x}\left(Vy^*(x)\right) - \proj{z}\left(Vy^*(z)\right)\big\| \\
        &\leq \big\|\proj{x}\big(V(y^*(x)-y^*(z))\big)\big\| + \big\|(\proj{x}-\proj{z})(V y^*(z))\big\| \\
        &\leq \big\|\proj{x}\big\| \big\|V(y^*(x)-y^*(z))\big\| + \big\|\proj{x}-\proj{z}\big\|\big\|V y^*(z)\big\| \\
        &\leq 2 \sigma_{\max}(V)\frac{\sigma_{\max}(B)}{\sigma_{\min}(B)}\|x-z\| + 2\sigma_{\max}(V)\|x-z\| \\
        &= 2 \sigma_{\max}(V)\left(1+\frac{\sigma_{\max}(B)}{\sigma_{\min}(B)}\right)\|x-z\|.
    \end{aligned}
    \end{align}
    Inserting \eqref{eq:app:proof:prop:difference_continuity_eq:20_aux1} and \eqref{eq:app:proof:prop:difference_continuity_eq:20_aux2} into \eqref{eq:app:proof:prop:difference_continuity_eq:20},
    we obtain
    \begin{align*}
        &\big\|v_\beta[\rho_t^\beta](x)-v_\infty(z)\big\| \\
        &\quad\quad\,\leq C(d,\sigma_{\min}(B),\sigma_{\max}(B),\sigma_{\max}(V),\ell_0)\left(\sqrt{\frac{\log(\beta+1)}{\beta}}e^{C_0t} + \|x-z\|\right).
    \end{align*}
    Exploiting this estimate in
    \eqref{eq:app:proof:prop:difference_continuity_eq:10} and abbreviating $C_1\coloneqq C(d,\sigma_{\min}(B),\sigma_{\max}(B),\sigma_{\max}(V),\ell_0)>0$ as well as $\tilde C_1 \coloneqq 2C_0 + C_1$, we get
    \begin{align}
        \nonumber
        \frac{\de}{\de t} W_2(\rho_t^\beta,\rho_t)
        &\leq 
        C_1
        \left(W_2(\rho_t^\beta,\rho_t) + \sqrt{\frac{\log(\beta+1)}{\beta}}e^{C_0t}\right)
        \\
        \label{eq:app:proof:prop:difference_continuity_eq:41}
        &\leq \widetilde{C}_1\left(W_2(\rho_t^\beta,\rho_t) + \sqrt{\frac{\log(\beta+1)}{\beta}}e^{C_0t}\right),
    \end{align}
    where the last bound is trivial by the definition of $\tilde C_1$, and we also recall that---as $\pi_t$ denotes the optimal coupling between $\rho_t^\beta$ and $\rho_t$---we have $W_2^2(\rho_t^\beta,\rho_t)=\iint_{\sphere\times\sphere} \|x-z\|^2 \de\pi_t(x,z)$. Using the integrating factor method with integrating factor~$e^{-\widetilde{C}_1t}$, we multiply both sides of \eqref{eq:app:proof:prop:difference_continuity_eq:41} by the integrating factor to observe that, after a little bit of algebra, \eqref{eq:app:proof:prop:difference_continuity_eq:41} is equivalent to
    \begin{align*}
        \frac{\de}{\de t} \left(e^{-\widetilde{C}_1t}W_2(\rho_t^\beta,\rho_t)\right)
        \leq \widetilde{C}_1 \sqrt{\frac{\log(\beta+1)}{\beta}} e^{(C_0-\widetilde{C}_1)t}.
    \end{align*}
    Since $\widetilde{C}_1>C_0$, we can integrate this inequality in time to obtain that
    \begin{align*}
    \begin{aligned}
        e^{-\widetilde{C}_1t}W_2(\rho_t^\beta,\rho_t)
        &\leq \widetilde{C}_1 \sqrt{\frac{\log(\beta+1)}{\beta}} \int_0^t e^{(C_0-\widetilde{C}_1)s} \de s \\
        &= \frac{\widetilde{C}_1}{\widetilde{C}_1-C_0} \sqrt{\frac{\log(\beta+1)}{\beta}} \left(1-e^{(C_0-\widetilde{C}_1)t}\right),
    \end{aligned}
    \end{align*}
    where we used that $\rho_0^\beta=\rho_0$.
    Dividing now both sides by the integrating factor and using the definition of $\tilde C_1$, we are left with
    \begin{align*}
    \begin{aligned}
        W_2(\rho_t^\beta,\rho_t)
        &\leq \frac{\widetilde{C}_1}{\widetilde{C}_1-C_0} \sqrt{\frac{\log(\beta+1)}{\beta}} \left(e^{\widetilde{C}_1t}-e^{C_0t}\right)\\
        &= \frac{2C_0+C_1}{C_0+C_1}  \sqrt{\frac{\log(\beta+1)}{\beta}} \left(e^{(2C_0 + C_1)t}-e^{C_0t}\right)\\
        &\leq 2 \sqrt{\frac{\log(\beta+1)}{\beta}} \left(e^{(2C_0 + C_1)t}-e^{C_0t}\right)
    \end{aligned}
    \end{align*}
    for all $t\in[0,T]$,
    which concludes the proof, upon renaming the constants.
\end{proof}

\subsection{Convergence of the flow \texorpdfstring{$\rho_t$}{rhot} to \texorpdfstring{$\Pi_\sharp\rho_0$}{Pi rho0}}
\label{app:sec:TERM_2}

The aim of this section is to estimate the convergence of the flow $\rho_t$ to $\Pi_\sharp\rho_0$, i.e., to estimate
$W_2(\rho_t,\Pi_\sharp\rho_0)$ by proving Proposition~\ref{prop:Lyapunov_zerotemp_p} in combination with Lemmas~\ref{lem:W2_to_Pi_p} and \ref{lem:Pi_dist}.

As a first step, we derive in Proposition~\ref{prop:Lyapunov_zerotemp_p} a time-evolution inequality for the functional $\mathcal{V}_p$ defined in \eqref{eq:VpRp}, as we detail in Appendix~\ref{app:sec:Lyapunov}. Using $\Pi_\sharp\rho_t=\Pi_\sharp\rho_0$, as demonstrated by Lemma~\ref{lem:Pi_dist}, it holds $W_2(\rho_t,\Pi_\sharp\rho_0) = W_2(\rho_t,\Pi_\sharp\rho_t)$,
we verify in Lemma~\ref{lem:W2_to_Pi_p} that the functional $\mathcal{V}_p$ is indeed a suitable Lyapunov functional for the flow $\rho_t$ since $W_2(\rho_t,\Pi_\sharp\rho_0) = W_2(\rho_t,\Pi_\sharp\rho_t) \leq \sqrt{2\mathcal{V}_p(\rho_t)}$,
where $\mathcal{V}_p(\rho_t)$ decays exponentially fast thanks to Proposition~\ref{prop:Lyapunov_zerotemp_p}.

\subsubsection{Auxiliary result for Lyapunov-type convergence estimates in Wasserstein space}
\label{app:sec:Lyapunov_aux}

We first show that $R_p$ as defined in \eqref{eq:VpRp} can be regarded as a distance from $E$ (despite not formally being a distance).
\begin{proof}[Proof of Lemma~\ref{lem:W2_to_Pi_p}]
    Fix $x\in \mathbb S^{d-1}\setminus E^\perp$.
    To simplify notation, let us use the orthogonal decomposition of $x$, 
    \begin{align*}
        x=u+v,\qquad\text{with}\quad
        u=\PE x\in E,\quad v=(\mathrm{Id}-\PE)x\in E^\perp,
        \quad\N{u}^2+\N{v}^2=1.
    \end{align*}
    We note that $u\not=0$ since $x\in \mathbb S^{d-1}\setminus E^\perp$. 
    Using that $\Pi(x)=\frac{u}{\N{u}}$,
    a simple computation shows
    \begin{align*}
        \N{x-\Pi(x)}^2
        &=
        2\left(1-\N{u}\right).
    \end{align*}
    Denoting
    \begin{align*}
        q(x)\coloneqq\frac{\N{(\mathrm{Id}-\PE)x}^2}{\N{\PE x}^2} = \frac{\N{v}^2}{\N{u}^2},
    \end{align*}
    and observing that 
    $1+q(x)
    = 1+\frac{\N{v}^2}{\N{u}^2}
    = \frac{\N{u}^2+\N{v}^2}{\N{u}^2}
    = \frac1{\N{u}^2}$
    and hence
    $\N{u}=\frac1{\sqrt{1+q(x)}}$,
    we have
    \begin{align*}
        \N{x-\Pi(x)}^2
        = 2\left(1-\frac1{\sqrt{1+q(x)}}\right).
    \end{align*}
    
    Consider now the scalar function
    \begin{align*}
        f(q)\coloneqq1-\frac1{\sqrt{1+q}},
        \qquad q\ge 0.
    \end{align*}
    First,
    $f(q)\le 1$ for all $q\ge 0$.
    Second, we claim that $f(q)\le q$ for all $q\ge 0$.
    To verify this, define $g(q) \coloneqq q-\big(1-\frac1{\sqrt{1+q}}\big)$.
    It holds $g(0)=0$ and $g'(q)=1-\frac12(1+q)^{-3/2}\ge \frac12>0$ for all $q\ge 0$, 
    showing that $g(q)\ge 0$,
    or equivalently $f(q)\le q$.
    Hence $f(q)\le \min\{1,q\}$.
    Since $p\in(0,1]$, we have for all $q\ge 0$ that
    $\min\{1,q\}\le q^p$.
    Indeed, if $0\le q\le 1$, then $q\le q^p$, while if $q\ge 1$, then $1\le q^p$.
    
    Combining the above bounds shows $1-\frac1{\sqrt{1+q(x)}}\le q(x)^p$
    and therefore
    \begin{align*}
        \N{x-\Pi(x)}^2
        =
        2\left(1-\frac1{\sqrt{1+q(x)}}\right)
        \le 2 q(x)^p
        =
        2R_p(x),
        \end{align*}
    validating the first part of the claim.
    
    To prove the second part of the statement,
    define the coupling $\pi\coloneqq(\mathrm{Id},\Pi)_\sharp\varrho$
    between $\varrho$ and $\Pi_\sharp\varrho$.
    Then, by definition of the Wasserstein distance,
    \begin{align*}
        W_2^2(\varrho,\Pi_\sharp\varrho)
        &\le \iint_{\sphere\times\sphere} \N{x-y}^2\de\pi(x,y) \\
        &=
        \int_{\sphere} \N{x-\Pi(x)}^2\de\varrho(x)
        \le
        2\int_{\sphere} R_p(x)\de\varrho(x),
    \end{align*}
    which concludes the proof.
\end{proof}

\subsubsection{Lyapunov-type convergence estimates in Wasserstein space}
\label{app:sec:Lyapunov}

We now have available all the necessary technical tools to prove Proposition~\ref{prop:Lyapunov_zerotemp_p}.

\begin{proof}[Proof of Proposition~\ref{prop:Lyapunov_zerotemp_p}]
    Let us denote by $\rho_t$ the solution to the continuity equation \eqref{eq:zerotemp_meanfield} in the zero-temperature limit.
    
    We first deal with the trivial case of $k=\dim E=d$ in which case $VB^\top$ equals a multiple of the identity matrix and hence the continuity equation \eqref{eq:zerotemp_meanfield} simplifies to $\partial_t\rho_t=0$ (using that $\proj{x}(x)=0$). 
    Hence, trivially, $\rho_t=\rho_0$ for all $t>0$ and $\Pi=\operatorname{Id}$ such that $\mathcal{V}_p(\rho_t)=0$ and also $W_2(\rho_t,\Pi_\sharp\rho_t)=0$ for all $t\geq 0$.
    
    Ideally, we would like to use $R_p$ as defined in \eqref{eq:VpRp} as test function for the weak formulation of \eqref{eq:zerotemp_meanfield}. 
    However, since $R_p$ is infinite on $E^\perp$, this is not possible.
    We therefore consider the regularization
    \begin{align*}
        R_{p,K}(x)
        = \min\left\lbrace R_p(x),K\right\rbrace 
    \end{align*}
    for $K>0$, and the corresponding regularized Lyapunov functional
    \begin{align*}
        \mathcal{V}_{p,K}(\rho)
        \coloneqq \int_{\sphere} R_{p,K}(x)\de\rho(x).
    \end{align*}
    Since $R_{p,K}$ is a Lipschitz function, it is differentiable almost everywhere by Rademacher's theorem.
    Hence, by chain rule and using the weak formulation of \eqref{eq:zerotemp_meanfield}, it holds for almost all $t\in[0,T]$ that
    \begin{align}
        \label{eq:proof:lem:Lyapunov_zerotemp:5}
        \frac{\de}{\de t}\mathcal{V}_{p,K}(\rho_t) = \int_{\sphere} \frac{\left\langle\nabla R_{p,K}(x),\proj{x}\left(VB^\top x\right)\right\rangle}{\N{B^\top x}} \de\rho_t(x),
    \end{align}
    and it remains to estimate the integrand from above.
    We start by deriving an upper bound for the inner product~$\ip{\nabla R_{p,K}(x)}{\proj{x}(VB^\top x)}$. 
    To simplify notation, let us use the orthogonal decomposition of $x\in\sphere$, 
    \begin{align*}
        x=u+v,\qquad\text{with}\quad
        u=\PE x\in E,\quad v=(\mathrm{Id}-\PE)x\in E^\perp,
        \quad\N{u}^2+\N{v}^2=1,
    \end{align*}
    allowing us to write $R_p(x)=\frac{\N{v}^{2p}}{\N{u}^{2p}}=R_1(x)^p$.
    Using the idempotency of $\PE$,
    we obtain for $p = 1$ and for all $x\in\sphere\setminus E^\perp$ that
    \begin{align}
        \label{eq:grad_R_1}
        \nabla R_1(x) = \frac{2 v \N{u}^2 - 2 u \N{v}^2}{\N{u}^4} = 2 R_1(x) \left( \frac{v}{\N{v}^2} - \frac{u}{\N{u}^2}\right).
    \end{align}
    Furthermore, by the chain rule, it holds that
    \begin{equation}\label{eq:chainRule.p}
        \nabla R_p(x) = p R_1(x)^{p-1} \nabla R_1(x),
    \end{equation}
    and for all $x\in\sphere$ that
    \begin{align}
        \label{eq:grad_R_p_K}
        \nabla R_{p,K}(x) = \mathbbm{1}_{R_p(x)\leq K}\nabla R_p(x),
    \end{align}
    i.e., $\nabla R_{p,K}(x)=0$ if $R_p(x)>K$, which in particular holds for all $x\in \sphere\cap E^\perp$.
    Let us now bound the quantity $\ip{\nabla R_{p,K}(x)}{\proj{x} (VB^\top x)}$. 
    First, note that $\ip{\nabla R_1(x)}{x} = 0$ since by \eqref{eq:grad_R_1}
    \begin{align*}
        \left\langle \frac{v}{\N{v}^2} - \frac{u}{\|u \|^2}, x \right\rangle 
        &= 
        \frac{\ip{v}{u}}{\| v\|^2} + \frac{\N{v}^2}{\N{v}^2} - \frac{\N{u}^2}{\N{u}^2}- \frac{\ip{u}{v}}{\| u\|^2} =0,
    \end{align*}
    where we have used that $\ip{u}{v} = 0$ by orthogonality.
    Recalling that $\proj{x}(VB^\top) x = VB^\top x - \ip{x}{VB^\top x} x$, we compute using \eqref{eq:grad_R_p_K} for all $x\in\sphere$ with $R_1(x)\leq K$ that
    \begin{align*}
        \ip{\nabla R_{1,K}(x)}{\proj{x} (VB^\top x)}
        &=\ip{\nabla R_1(x)}{\proj{x} (VB^\top x)} \\
        &= 
        \ip{\nabla R_1(x)}{VB^\top x} 
        -
        \ip{x}{VB^\top x}
        \ip{\nabla R_1(x)}{x}
        = 
        \ip{\nabla R_1(x)}{VB^\top x}
        \\
        &= 
        \ip{\nabla R_1(x)}{VB^\top(u+v)}
        =
        \ip{\nabla R_1(x)}{VB^\top u} + \ip{\nabla R_1(x)}{VB^\top v}
        \\
        &=
        2 R_1(x) \left( \frac{\ip{v}{VB^\top u}}{\N{v}^2} - \frac{\ip{u}{VB^\top u}}{\N{u}^2} + \frac{\ip{v}{VB^\top v}}{\N{v}^2} - \frac{\ip{u}{VB^\top v}}{\N{u}^2}\right)
        \\
        &=
        2 R_1(x) 
        \left( \frac{\ip{v}{VB^\top v}}{\N{v}^2} - \frac{\ip{u}{VB^\top u}}{\N{u}^2}\right) 
        \\
        &\leq
        -2 (\mu_1-\mu_2) R_1(x)
        =
        -2 \gamma R_1(x),
    \end{align*}
    where we used \eqref{eq:grad_R_1} to obtain the fourth line
    and for the fifth line that 
    by orthogonality and since $E$ is an eigenspace of $VB^\top$, 
    $\ip{v}{VB^\top u}=\mu_1\ip{v}{u}=0$ as well as $\ip{u}{VB^\top v}=\ip{VB^\top u}{v}=\mu_1\ip{u}{v}=0$.
    To obtain the inequality in the last line, note that
    $\ip{v}{VB^\top v} \leq \mu_2 \N{v}^2$
    and
    $\ip{u}{VB^\top u} = \mu_1 \N{u}^2$,
    since $E$ is the dominant eigenspace of $VB^\top$.
    Further recall the identity $\gamma=\mu_1-\mu_2$.
    
    Combining \eqref{eq:chainRule.p} with the above inequality yields the case of general $p\in(0,1]$ that
    \begin{align}\label{eq:proof.p.1}
    \begin{aligned}
        \ip{\nabla R_{p,K}(x)}{\proj{x}(VB^\top x)}
        &=\ip{\nabla R_p(x)}{\proj{x}(VB^\top x)}\\
        &= p R_1(x)^{p-1} \ip{\nabla R_1(x)}{\proj{x}(VB^\top x)} \leq -2 p\gamma R_p(x).
    \end{aligned}
    \end{align}
    Employing \eqref{eq:grad_R_p_K} together with \eqref{eq:proof.p.1} in \eqref{eq:proof:lem:Lyapunov_zerotemp:5}
    and noticing that 
    $\N{B^\top x}\leq \sigma_{\max}(B) \N{x} = \sigma_{\max}(B)$,
    we obtain the differential inequality
    \begin{align*}
        \frac{\de}{\de t}\mathcal{V}_{p,K}(\rho_t)
        &= \int_{\sphere} \mathbbm{1}_{R_p(x)\leq K}\frac{\ip{\nabla R_p(x)}{\proj{x}(VB^\top x)}}{\N{B^\top x}} \de\rho_t(x) \\
        &\leq 
        -\frac{2p\gamma}{\sigma_{\max}(B)}
        \int_{\sphere} 
        \mathbbm{1}_{R_p(x)\leq K}
        R_p(x)
        \de\rho_t(x).
    \end{align*}
    Integrating this inequality in $t$ yields by the fundamental theorem of calculus
    \begin{align}
        \label{eq:proof:lem:Lyapunov_zerotemp:80}
        \mathcal{V}_{p,K}(\rho_t) - \mathcal{V}_{p,K}(\rho_0) \leq -\frac{2p\gamma}{\sigma_{\max}(B)} \int_0^t \int_{\sphere} \mathbbm{1}_{R_p(x)\leq K} R_p(x) \de\rho_s(x) \de s.
    \end{align}
    Let us now take the limit $K \to \infty$.
    Noticing that $R_{p,K}(x)$ converges point-wise from below to $R_p(x)$ as $K \to \infty$, by the Beppo Levi monotone convergence theorem, $\mathcal{V}_{p,K}(\rho_t) \to \mathcal{V}_p(\rho_t)$ and $\mathcal{V}_{p,K}(\rho_0) \to \mathcal{V}_p(\rho_0)$. Analogously, since 
    $\mathbbm{1}_{R_p(x)\leq K} R_p(x)$ converges point-wise from below to $R_p(x)$ as $K \to \infty$, by the Beppo Levi monotone convergence theorem it holds
    \begin{align*}
        \int_{\sphere} \mathbbm{1}_{R_p(x)\leq K} R_p(x) \de\rho_s(x)
        \rightarrow
        \int_{\sphere} R_p(x) \de\rho_s(x)
        = \mathcal{V}_p(\rho_s).
    \end{align*}
    Since this last convergence is also monotone for every $s$, we can apply the Beppo Levi monotone convergence theorem once again,
    and from \eqref{eq:proof:lem:Lyapunov_zerotemp:80}, after taking the limit $K \to \infty$ on both sides of the inequality, infer
    \begin{align*}
        \mathcal{V}_p(\rho_t) - \mathcal{V}_p(\rho_0) \leq -\frac{2p\gamma}{\sigma_{\max}(B)} \int_0^t \mathcal{V}_p(\rho_s) \de s.
    \end{align*}
    An application of Grönwall's inequality yields
    \begin{align*}
        \mathcal{V}_p(\rho_t)
        \leq 
        \mathcal{V}_p(\rho_0)
        \exp\left(-\frac{2p\gamma}{\sigma_{\max}(B)} t\right),
    \end{align*}
    which concludes the proof.
\end{proof}

\subsubsection{Invariance of \texorpdfstring{$\Pi$}{Pi} under the zero-temperature equation}
\label{app:lem:Pi_dist}

Before giving the proof of Lemma~\ref{lem:Pi_dist},
let us first verify the following technical result. 

\begin{lemma}
    \label{lem:jacobian}
    Assume that the matrices~$B$ and $V$
    satisfy Assumption~\ref{asm:weights}.
    Then, for $x\in\sphere\setminus E^\perp$, the Jacobian $D\Pi(x)$ of $\Pi$ as defined in \eqref{eq:Pi} exists and equals
    \begin{align*}
        D\Pi(x) =
        \frac{1}{\N{\PE x}}
        \left(\PE - 
        \frac{\PE x \left(\PE x\right)^\top}{\N{\PE x}^2}
        \right).
    \end{align*}
    Moreover, it holds
    \begin{align*}
        D\Pi(x)\proj{x}\left(\frac{VB^\top x}{\N{B^\top x}}\right) = 0.
    \end{align*}
\end{lemma}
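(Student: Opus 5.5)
Both claims follow from direct computation. For the Jacobian, I would view $\Pi$ as the composition of the linear map $x\mapsto \PE x$ with the normalization $N(w)\coloneqq w/\N{w}$, which is smooth on $\R^d\setminus\{0\}$. For $x\in\sphere\setminus E^\perp$ we have $\PE x\neq 0$, so the chain rule applies. The standard derivative $DN(w)=\frac{1}{\N{w}}\bigl(\mathrm{Id}-\frac{ww^\top}{\N{w}^2}\bigr)$, evaluated at $w=\PE x$ and multiplied on the right by $\PE$, gives
\[
D\Pi(x)=\frac{1}{\N{\PE x}}\Bigl(\PE-\frac{\PE x(\PE x)^\top}{\N{\PE x}^2}\Bigr).
\]
Here I use $\PE x(\PE x)^\top\PE=\PE x(\PE x)^\top$, which holds because $\PE$ is symmetric and idempotent. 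Since $\Pi$ is defined on an open subset of $\sphere$, the Jacobian acts on tangent vectors; the formula above is the ambient extension, which is what is needed next.

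For the second claim, the scalar $\N{B^\top x}^{-1}$ can be dropped. It then suffices to show $D\Pi(x)\proj{x}(VB^\top x)=0$. Write $M\coloneqq VB^\top$, which is symmetric by \ref{asm:VBTsymmetric}, and $u\coloneqq\PE x$. Then
\[
\proj{x}(Mx)=Mx-\ip{x}{Mx}\,x .
\]
I would check that $D\Pi(x)$ annihilates each of $Mx$ and $x$ separately, up to multiples of $u$. There are two key facts.

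\textbf{(i)} $D\Pi(x)$ annihilates $u$ and every vector $w$ with $\PE w\in\operatorname{span}\{u\}$. Indeed, the bracket maps $w$ to $\PE w-u\,\ip{u}{\PE w}/\N{u}^2$, and this vanishes when $\PE w=cu$.

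\textbf{(ii)} $\PE x=u$, and $\PE Mx=\mu_1 u$. The second identity holds because $E$ is an eigenspace of the symmetric matrix $M$, so $\PE$ commutes with $M$ and $M$ acts as $\mu_1$ on $E$.

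Combining (i) and (ii), $\PE\proj{x}(Mx)=(\mu_1-\ip{x}{Mx})\,u$, which lies in $\operatorname{span}\{u\}$, so $D\Pi(x)$ sends it to zero.

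The only point needing care, though not a real obstacle, is the commutation $\PE M=M\PE$. This is where the symmetry in \ref{asm:VBTsymmetric} enters: a symmetric matrix leaves both $E$ and $E^\perp$ invariant, so $M$ commutes with the orthogonal projection onto its eigenspace. Without symmetry, $\PE Mx$ would acquire a component from $Mv$, where $v=(\mathrm{Id}-\PE)x$, leaking into $E$ outside $\operatorname{span}\{u\}$, and the identity would fail.
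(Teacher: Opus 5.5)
Your proposal is correct and follows essentially the same route as the paper. For the Jacobian, you use the chain rule through the normalization map, while the paper uses the quotient rule. For the second claim, both arguments rest on the same two facts: $\ip{\PE x}{x}=\N{\PE x}^2$, and $\PE VB^\top x=\mu_1\PE x$, which holds by the symmetry of $VB^\top$. The paper packages this by first simplifying the bracket applied to $\proj{x}(y)$ for general $y$, whereas you phrase it as $D\Pi(x)$ annihilating every $w$ with $\PE w\in\operatorname{span}\{\PE x\}$.
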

\begin{proof}
    To obtain the first claim,
    we compute for $x\in\sphere\setminus E^\perp$ with quotient rule that
    \begin{align*}
        D\Pi(x)
        =
        \frac{\PE}{\N{\PE x}}
        -
        \frac{\PE x \left(\PE x\right)^\top}{\N{\PE x}^3}
        =
        \frac{1}{\N{\PE x}}
        \left(\PE - 
        \frac{\PE x \left(\PE x\right)^\top}{\N{\PE x}^2}
        \right).
    \end{align*}
    For the second claim,
    using that $\proj{x}(y) = y - \langle x, y\rangle x$ and that $\ip{\PE x}{x}=\N{\PE x}^2$,
    we derive for arbitrary $y\in\sphere$ that
    \begin{align}
        \label{eq:proof:lem:jacobian:aux}
    \begin{aligned}
        \left(
        \PE - \frac{\PE x (\PE x)^\top}{\N{\PE x}^2}
        \right) \proj{x}(y)
        &=
        \PE y - \frac{\langle \PE x, y \rangle}{\N{\PE x}^2} \PE x
        -\ip{x}{y}
        \left(
        \PE x - \frac{\PE x\ip{\PE x}{x}}{\N{\PE x}^2}
        \right)
        \\
        &=
        \PE y - \frac{\langle \PE x, y \rangle}{\N{\PE x}^2} \PE x.
    \end{aligned}
    \end{align}
    To simplify notation, let us use the orthogonal decomposition of $x$, 
    \begin{align*}
        x=u+v,\qquad\text{with}\quad
        u=\PE x\in E,\quad v=(\mathrm{Id}-\PE)x\in E^\perp,
        \quad\N{u}^2+\N{v}^2=1.
    \end{align*}
    Specifically, for $\hat{y} = \frac{V B^\top x}{\N{B^\top x}}$,
    we have $\hat{y} = \frac{\mu_1 u}{\N{B^\top x}}  + \frac{V B^\top v}{\N{B^\top x}}$.
    Since $VB^\top v \in E^\perp$ (here, we crucially use the symmetry of $VB^\top$),
    it holds
    \begin{align*}
        \PE \hat{y} = \frac{\mu_1 u}{\N{B^\top x}} = \frac{\mu_1 \PE x}{\N{B^\top x}}
        \quad\text{ and hence }\quad
        \langle \PE x, \hat{y} \rangle = \mu_1 \frac{\N{\PE x}^2}{\N{B^\top x}}.
    \end{align*}
    Using those identities in the last step,
    we have with \eqref{eq:proof:lem:jacobian:aux}
    that
    \begin{align*}
        D\Pi(x)\proj{x}(\hat{y})
        = \frac{1}{\N{\PE x}} \left( \PE - \frac{\PE x (\PE x)^\top}{\N{\PE x}^2} \right) \proj{x}(\hat{y})
        = \frac{1}{\N{\PE x}} \left(\PE \hat{y} - \frac{\langle \PE x, \hat{y} \rangle}{\N{\PE x}^2} \PE x\right)
        = 0,
    \end{align*}
    which concludes the proof.
\end{proof}
With this auxiliary technical tool, we can now provide a proof of Lemma~\ref{lem:Pi_dist}.
\begin{proof}[Proof of Lemma~\ref{lem:Pi_dist}]
    To keep the notation concise,
    let us write
    \begin{align*}
        v_\infty(x)
        \coloneqq
        \proj{x}\left(Vy^*(x)\right)
        \quad\text{ with }\quad
        y^*(x)
        \coloneqq
        \frac{B^\top x}{\N{B^\top x}}
    \end{align*}
    for the vector field of the continuity equation \eqref{eq:zerotemp_meanfield}.
    Let $\varphi\in \mathcal{C}^\infty(\sphere,\mathbb{R})$ be a smooth test function and choose a smooth cut-off function $\eta\in \mathcal{C}^\infty([0,\infty),[0,1])$ satisfying
    \begin{align*}
        \eta(r)=0 \quad\text{for } r\le 1,
        \qquad
        \eta(r)=1 \quad\text{for } r\ge 2,
        \qquad
        |\eta'|\leq 2.
    \end{align*}
    For $\delta>0$, define $\eta_\delta(r)\coloneqq\eta(r/\delta)$ and 
    \begin{align*}
        \varphi_\delta(x)
        \coloneqq\eta_\delta\left(\|\PE x\|\right)\varphi(\Pi(x)).
    \end{align*}
    Clearly, $\varphi_\delta\in \mathcal{C}^\infty(\sphere,\mathbb{R})$ and hence an admissible test function.
    Using the weak formulation of \eqref{eq:zerotemp_meanfield} for $\varphi_\delta$,
    it holds
    \begin{align*}
        \frac{\de}{\de t}\int_{\sphere}\varphi_\delta(x)\de\rho_t(x)
        = \int_{\sphere}\left\langle\nabla\varphi_\delta(x), v_\infty(x)\right\rangle\!\de\rho_t(x),
    \end{align*}
    where we can compute by product and chain rule
    \begin{align*}
        \nabla\varphi_\delta(x)
        =
        \eta_\delta(\|\PE x\|)\nabla(\varphi\circ\Pi)(x)
        +
        \eta_\delta'(\|\PE x\|)\varphi(\Pi(x))\,\nabla\|\PE x\|,
        \qquad x\in\sphere,
    \end{align*}
    noting that $\nabla\N{\PE x}$ exists on the support of $\eta_\delta'$.
    Since by Lemma~\ref{lem:jacobian}, $D\Pi(x)v_\infty(x)=0$ for all $x\in \sphere\setminus E^\perp$, we have for all $x\in \sphere\setminus E^\perp$ that
    \begin{align*}
        \left\langle
        \nabla(\varphi\circ\Pi)(x), v_\infty(x)
        \right\rangle
        =
        \left\langle
        D\Pi(x)^\top\nabla\varphi(\Pi(x)), v_\infty(x)
        \right\rangle
        =
        \left\langle
        \nabla\varphi(\Pi(x)),D\Pi(x) v_\infty(x)
        \right\rangle
        =0.
    \end{align*}
    Furthermore, for $x\in\sphere\setminus E^\perp$ we have $\nabla\N{\PE x} = \frac{\PE x}{\N{\PE x}}$.
    Therefore, we obtain
    \begin{align}
        \label{eq:proof:lem:Pi_dist:10}
        \frac{\de}{\de t}\int_{\sphere}\varphi_\delta(x)\de\rho_t(x)
        =
        \int_{\sphere}
        \eta_\delta'\left(\|\PE x\|\right)
        \varphi(\Pi(x))
        \left\langle
        \frac{\PE x}{\N{\PE x}}, v_\infty(x)
        \right\rangle
        \!\de\rho_t(x).
    \end{align}
    To simplify notation, let us use the orthogonal decomposition of $x$, 
    \begin{align*}
        x=u+v,\qquad\text{with}\quad
        u=\PE x\in E,\quad v=(\mathrm{Id}-\PE)x\in E^\perp,
        \quad\N{u}^2+\N{v}^2=1.
    \end{align*}
    Using this notation and that $v_\infty(x)=\proj{x}\left(\frac{VB^\top x}{\N{B^\top x}}\right)=\frac{VB^\top x-\langle x,VB^\top x\rangle x}{\N{B^\top x}}$,
    we compute
    \begin{align*}
        \left\langle
        \frac{\PE x}{\N{\PE x}},v_\infty(x)
        \right\rangle
        =
        \frac{\langle u,v_\infty(x)\rangle}{\N{u}}
        =
        \frac{
        \mu_1\|u\|^2-\langle x,VB^\top x\rangle\|u\|^2
        }{\N{B^\top x}\N{u}}
        =
        \frac{\|u\|(\mu_1-\langle x,VB^\top x\rangle)}{\N{B^\top x}},
    \end{align*}
    where we used in the second step that by the invariance of $E$ and $E^\perp$, it holds $\PE(VB^\top x)=\mu_1 u$.
    This allows us to bound for a constant $C>0$ depending only on $B$ and $VB^\top$ that
    \begin{align*}
        \left|\left\langle
        \frac{\PE x}{\N{\PE x}},v_\infty(x)
        \right\rangle\right|
        \le C\|u\|
        =
        C\|\PE x\|.
    \end{align*}
    Since $\eta'_\delta$ is supported in $A_\delta\coloneqq\{x\in \sphere:\delta\le \|\PE x\|\le 2\delta\}$,
    we can bound \eqref{eq:proof:lem:Pi_dist:10} as
    \begin{align}
        \label{eq:proof:lem:Pi_dist:20}
        \left|\frac{\de}{\de t}\int_{\sphere}\varphi_\delta(x)\de\rho_t(x)\right|
        \leq 
        C\int_\sphere
        \underbrace{|\eta_\delta'(\N{\PE x})|}_{\leq\frac{2}{\delta}\mathbbm{1}_{A_\delta}}\,
        |\varphi(\Pi(x))|\,
        \underbrace{\N{\PE x}}_{\leq2\delta}\de\rho_t(x)
        \leq C\rho_t(A_\delta),    
    \end{align}
    where $C>0$ is another constant depending solely on $B$, $VB^\top$, and the maximum of $\varphi\in \mathcal{C}^\infty(\sphere,\mathbb{R})$ on the compact sphere $\sphere$.
    It remains to estimate $\rho_t(A_\delta)$.
    Since on $A_\delta$
    it holds $\|\PE x\|^{2}\leq4\delta^2$
    and for $\delta\le 1/4$ moreover
    \begin{align*}
        \N{(\mathrm{Id}-\PE)x}^2
        =\N{v}^2
        =1-\N{u}^2
        =1-\|\PE x\|^2
        \ge 1-4\delta^2\ge 3/4,
    \end{align*}
    we have for all $x\in A_\delta$ that
    \begin{align*}
        R_p(x)=\frac{\|(\mathrm{Id}-\PE)x\|^{2p}}{\|\PE x\|^{2p}}
        \ge c_p\delta^{-2p}
    \end{align*}
    for a suitable constant $c_p>0$.
    We can therefore estimate
    \begin{align*}
        \rho_t(A_\delta)
        = \int_{A_\delta}1\de\rho_t(x)
        \leq \frac{\delta^{2p}}{c_p}\int_{A_\delta}R_p(x)\de\rho_t(x)
        \le \frac{\delta^{2p}}{c_p}\int_{\sphere}R_p(x)\de\rho_t(x)
        =
        \frac{\delta^{2p}}{c_p}\mathcal V_p(\rho_t).
    \end{align*}
    Thanks to Proposition~\ref{prop:Lyapunov_zerotemp_p},
    $\sup_{t\in[0,T]}\mathcal V_p(\rho_t)\le \mathcal V_p(\rho_0)<\infty$, 
    and thus, as $\delta\to 0$,
    we get from \eqref{eq:proof:lem:Pi_dist:20} that
    \begin{align}
        \label{eq:proof:lem:Pi_dist:40}
        \sup_{t\in[0,T]}
        \left|\frac{\de}{\de t}\int_{\sphere}\varphi_\delta(x)\de\rho_t(x)\right|
        \le C\delta^{2p}\mathcal V_p(\rho_0)\to 0,
    \end{align}
    where $C>0$ now depends also on $p$.
    By the fundamental theorem of calculus,
    \begin{align}
        \label{eq:proof:lem:Pi_dist:50}
        \int_{\sphere}\varphi_\delta(x)\de\rho_t(x)
        -\int_{\sphere}\varphi_\delta(x)\de\rho_0(x)
        = \int_0^t \left( \frac{\de}{\de s} \int_{\sphere}\varphi_\delta(x)\de\rho_s(x) \right) \!\de s.
    \end{align}
    Recall that by definition of the cut-off function $\eta$ it holds $|\varphi_\delta| \leq \varphi\circ \Pi$ for any $\delta>0$ and $\varphi_\delta\rightarrow\varphi\circ\Pi$ pointwise on $\sphere\setminus E^\perp$ as $\delta\rightarrow0$.
    Since $\mathcal V_p(\rho_t)<\infty$ by Proposition~\ref{prop:Lyapunov_zerotemp_p},
    it holds $\rho_t(E^\perp)=0$.
    Hence, $\int_{\sphere} (\varphi\circ\Pi)(x)\de\rho_t(x) = \int_{\sphere\backslash E^\perp} (\varphi\circ\Pi)(x)\de\rho_t(x) \leq \max_{z\in\sphere}\varphi(z)<\infty$ since $\varphi$ is a smooth function on a compact space.
    We can now pass to the limit $\delta\to0$ using the dominated convergence theorem to infer the convergence of the integrals on the left-hand side of \eqref{eq:proof:lem:Pi_dist:50} to $\int_{\sphere}\varphi(\Pi(x))\de\rho_t(x)-\int_{\sphere}\varphi(\Pi(x))\de\rho_0(x)$.
    With the right-hand side of \eqref{eq:proof:lem:Pi_dist:50} vanishing due to \eqref{eq:proof:lem:Pi_dist:40} as $\delta\rightarrow0$, we obtain that
    \begin{align*}
        \int_{\sphere} \varphi(x) \de(\Pi_\sharp\rho_t)(x)
        &=
        \int_{\sphere}\varphi(\Pi(x))\de\rho_t(x) \\
        &=
        \int_{\sphere}\varphi(\Pi(x))\de\rho_0(x)
        =
        \int_{\sphere} \varphi(x) \de(\Pi_\sharp\rho_0) (x).
    \end{align*}
    Since this holds for all smooth $\varphi$, we conclude that $\Pi_\sharp\rho_t=\Pi_\sharp\rho_0$, as desired.
\end{proof}

\subsection{Proof of Corollary~\ref{cor:exponential_convergence}}
\label{sec:proof:cor:exponential_convergence}

\begin{proof}
    It is easy to verify that for $t\geq t_1$ it holds $\exp\left(-\nicefrac{p\gamma}{\sigma_{\max}(B)}t\right)\leq\frac{\eps}{2}$, and that for $t\leq t_2$ it holds $2\sqrt{\nicefrac{\log(\beta+1)}{\beta}}(e^{C_1t}-e^{C_0t})\leq2\sqrt{\nicefrac{\log(\beta+1)}{\beta}}(e^{C_1t}-1)\leq\frac\eps2$.
    By choosing $\beta$ large enough (in fact, $\beta\gtrsim\tfrac{1}{\eps^4}\log\tfrac{1}{\eps}$ suffices) we have $t_1<t_2$ and the conclusion follows from Theorem~\ref{thm:main}.
\end{proof}

\section{Heuristics supporting Conjecture~\ref{conj:v_max(V)}}
\label{app:conjecture}

In this section, we provide heuristics supporting Conjecture~\ref{conj:v_max(V)}. Some of them can actually be found in the recent work \cite{altafini2026multistability}, but we include them here for completeness.

\paragraph{One-point dynamics.}

First, we consider the trivial case where the token distribution is completely collapsed. This captures the situation where the initial distribution $\rho_0$ is just supported on a single point and approximates the more interesting situation where at some positive time $t>0$ the token distribution is (almost) collapsed.

We can show that from there on the distribution aligns with the dominant eigenspace of $V$. To see this note that for $\varrho=\delta_{x_0}$ with $x_0\in\sphere$, we have
\begin{align*}
    \int_\sphere
    \frac{\exp(\beta\ip{x}{By})}{\int_\sphere\exp(\beta\ip{x}{Bz})\de\varrho(z)}Vy\de\varrho(y)
    =
    Vx_0.
\end{align*}
It is then easy to observe that the solution of \eqref{eq:SAdynamics_meanfield} is given by $\rho_t^\beta=\delta_{x(t)}$ where $t\mapsto x(t)$ solves the Oja flow
\begin{equation*}
    \dot x(t) = \proj{x(t)}Vx(t),\qquad x(0)=x_0.
\end{equation*}
It follows from \cite[Lemmas A.13, A.15]{bruno2025multiscale} (or a straightforward computation using the quotient rule) that $x(t)=\frac{y(t)}{\N{y(t)}}$ where $t\mapsto y(t)$ solves the system of linear ordinary differential equations
\begin{align*}
    \dot y(t) = Vy(t).
\end{align*}
Assuming for simplicity that $V$ is symmetric and that its largest eigenvalue is simple, it follows from the theory of linear ordinary differential equations that $y(t)=\exp(Vt)y(0)$ and hence that $x(t)$ converges exponentially fast to an eigenvector $v$ lying in the eigenspace of $V$ that corresponds to its largest eigenvalue as $t\to\infty$ provided that $\ip{x_0}{v}\neq 0$. As a consequence, $W_2(\rho_t^\beta,\delta_v)=\N{x(t)-v}\to 0$ as $t\to\infty$. This reasoning can be extended to more general classes of matrices $V$ by using their Jordan normal form. 

\paragraph{Some stationary distributions.}

The above shows, in particular, that $\delta_v$ is a stationary distribution of \eqref{eq:SAdynamics_meanfield}.
Let us now investigate slightly more complicated stationary distributions of this equation.
For this, let us assume that $\varrho = \frac{1}{n}\sum_{i=1}^n\delta_{v_i}$ where $v_i$ are eigenvectors of $V$ with eigenvalue $\lambda>0$. 
Abbreviating $N(x):=\sum_{i=1}^n\exp\left(\beta\ip{x}{Bv_i}\right)$, we can compute the drift vector field in \eqref{eq:SAdynamics_meanfield} as
\begin{align*}
    \int_\sphere
    \frac{\exp(\beta\ip{x}{By})}{\int_\sphere\exp(\beta\ip{x}{Bz})\de\varrho(z)}Vy\de\varrho(y)
    &=
    \frac{1}{N(x)}
    \sum_{i=1}^n
    \exp\left(\beta\ip{x}{Bv_i}\right)V v_i
    \\
    &=
    \frac{\lambda}{N(x)}
    \sum_{i=1}^n
    \exp\left(\beta\ip{x}{Bv_i}\right) v_i,
    \qquad
    \forall x\in\sphere.
\end{align*}
Hence, for any smooth $\phi\in\mathcal C^\infty(\sphere,\R)$, using that $\proj{v_i}(v_i)=0$, we have
\begin{align}
    &
    \int_\sphere
    \left\langle
    \nabla\phi(x),
    \proj{x}
    \left(
    \int_\sphere
    \frac{\exp(\beta\ip{x}{By})}{\int_\sphere\exp(\beta\ip{x}{Bz})\de\varrho(z)}Vy\de\varrho(y)
    \right)
    \right\rangle
    \de\varrho(x)\nonumber
    \\
    \nonumber
    &\qquad\,=
    \frac{1}{n}
    \sum_{i=1}^n
    \left\langle
    \nabla\phi(v_i),
    \proj{v_i}
    \left(
    \frac{\lambda}{N(v_i)}
    \sum_{j=1}^n
    \exp\left(\beta\ip{v_i}{Bv_j}\right) v_j
    \right)
    \right\rangle
    \\
    \label{eq:drift_term}
    &\qquad\,=
    \frac{\lambda}{n}
    \sum_{i=1}^n
    \frac{1}{N(v_i)}
    \left\langle
    \nabla\phi(v_i),
    \proj{v_i}
    \left(
    \sum_{j\neq i}
    \exp\left(\beta\ip{v_i}{Bv_j}\right) v_j
    \right)
    \right\rangle.
\end{align}
If $n=1$, meaning that $\varrho=\delta_v$ for some $v\in\sphere$ with $Vv=\lambda v$, then \eqref{eq:drift_term} equals zero. 
This shows that $\varrho=\delta_v$ is a stationary distribution of \eqref{eq:SAdynamics_meanfield}.

A slightly more complex example is $n=2$ and $\varrho=\frac12\left(\delta_v+\delta_{-v}\right)$, for which we have $\proj{v}(-v)=-v-\ip{v}{-v}v=0$ and $\proj{-v}(v)=v-\ip{-v}{v}(-v)=0$.
Hence, \eqref{eq:drift_term} equals zero and $\varrho$ is a stationary distribution of \eqref{eq:SAdynamics_meanfield}.
Such situation is depicted in Figure~\ref{fig:conj}.

These two classes of stationary distributions concentrated on eigenvectors of $V$ were already identified in \cite[Lemma 7]{altafini2026multistability} and were referred to as consensus and bipartite consensus therein.
In \cite[Theorem 2]{altafini2026multistability}, it was also shown that consensus on an eigenvector with respect to the largest eigenvalue of $V$ is a stable equilibrium and that bipartite consensus can be stable under certain conditions.

\section{Additional numerical experiments}
\label{app:numericalexperiments}

This appendix contains implementation details and supplementary numerical experiments. We begin by detailing the time discretizations used in our simulations, together with the initialization procedure for the particle simulations. We then present several additional experiments: examples showing effects that can occur when $VB^\top$ is not symmetric, simulations in the gradient flow case $B=\pm V$, and numerical evidence supporting Conjecture~\ref{conj:v_max(V)}.

\subsection{Details on the discretization}

We provide pseudocode for the employed Euler discretization of \eqref{eq:SAdynamics}, and the zero-temperature limit \eqref{eq:zerotemp_meanfield}. Let us highlight here that Algorithm \ref{alg:sadyn} is not a faithful discretization of the zero-temperature equation. Namely, formally setting $\beta=\infty$ in \eqref{eq:SAdynamics} amounts to replacing the softmax in lines 2--3 of Algorithm~\ref{alg:sadyn} by a hardmax function. In the finite-particle case, however, this does not recover Algorithm~\ref{alg:zerodyn}. We note that whenever we run a numerical experiment in what follows with $\beta=\infty$, this means we employ Algorithm~\ref{alg:zerodyn}.

\begin{algorithm}[h]
\caption{Discretization of \eqref{eq:SAdynamics}}\label{alg:sadyn}
\begin{algorithmic}[1]
\Require Tokens $\{x_i\}_{i=1}^n \subset \mathbb{S}^{d-1}$,
 matrices $B, V \in \mathbb{R}^{d\times d}$,
 $\beta,\,\Delta t> 0.$
\For{$i = 1, \ldots, n$}
\State $w_{ij} \gets
\dfrac{\exp\left(\beta\ip{x_i}{B x_j}\right)}
    {\sum_{k=1}^{n}\exp\left(\beta\ip{x_i}{B x_k}\right)},
\quad j=1,\ldots,n$
\State $m_i \gets \sum_{j=1}^{n} w_{ij}\,x_j$
\Comment{softmax-weighted consensus}
\State $x_i \gets x_i
     + \Delta t\;\proj{x_i} V m_i$
\State $x_i \gets x_i \,/\, \|x_i\|$
\Comment{retract to $\mathbb{S}^{d-1}$}
\EndFor
\end{algorithmic}
\end{algorithm}

\begin{algorithm}[h]
\caption{Discretization of the zero-temperature limit \eqref{eq:zerotemp_meanfield}}\label{alg:zerodyn}
\begin{algorithmic}[1]
\Require Tokens $\{x_i\}_{i=1}^n \subset \mathbb{S}^{d-1}$,
         matrices $B, V \in \mathbb{R}^{d\times d}$,
         $\Delta t> 0$
\For{$i = 1, \ldots,n$}
    \State $m_i \gets B^\top x_i \,/\, \|B^\top x_i\|$
    \State $x_i \gets x_i
             + \Delta t\;\proj{x_i} V m_i$
    \State $x_i \gets x_i \,/\, \|x_i\|$
    \Comment{retract to $\mathbb{S}^{d-1}$}
\EndFor
\end{algorithmic}
\end{algorithm}

\paragraph{Initialization.} For the visualization in Figure~\ref{fig:thm1}, we employ a mixture of von Mises--Fisher distributions, with densities, that up to normalization are given as
\begin{align*}
p(x| \mu_i,\kappa_i) \sim 
\exp(\kappa_i \mu_i^\top x)
\end{align*}
with parameters $\kappa_i\in[0,\infty)$ and $\mu_i \in \R^d$ for $i=1,\ldots,m$. For weights $w_i$ with $\sum_{i=1}^m w_i=1$, the mixture then fulfills
\begin{align*}
p(x|\{(\mu_i,\kappa_i)\}_{i=1}^m) \sim \sum_{i=1}^m p(x| \mu_i,\kappa_i).
\end{align*}
For our examples, we choose
\begin{gather*}
\mu_1 = (1,-0.3,-0.2),\, \mu_2 =(0,1,-0.3),\, \mu_3=(-1,1,1),\\ 
\kappa_1=2,\,\kappa_2 = 10,\,\kappa_3 = 5,\quad
w_1=w_2=w_3=1/3.
\end{gather*}

\subsection{Non-symmetric \texorpdfstring{$VB^\top$}{VBt} effects}
\label{sec:nonsymmetric}

We illustrate the role of the symmetry assumption on $VB^\top$ through two examples. To do so, we consider the following matrices
\begin{align}\label{eq:nonsym}
V_1 &=
\begin{pmatrix}
-1   &1  &0\\
-2   &1  &0\\
0    &0  &-2
\end{pmatrix},\qquad
B_1 = \operatorname{diag}(-1, -1, 1),\\
V_2 &= \operatorname{diag}(-1, 1, -2),\qquad 
B_2 = 
\begin{pmatrix}
-1   &1  &0\\
-2   &1  &0\\
0    &0  &1
\end{pmatrix}.
\end{align}
In this case, note that the eigenvalues of $V_iB_i^\top$ and $V_i$ are not necessarily real. In fact, the eigenvalues of $V_1B_1^\top$ and $V_1$ equal $\pm i$ and $-2$, whereas the eigenvalues of $V_2B_2^\top$, despite being a non-symmetric matrix, are real and equal $1\pm\sqrt{2}$ and $-2$. Those of $V_2$ are equal to $\pm 1$ and $-2$. The trajectories for these two settings are displayed in Figure~\ref{fig:finiteBeta1}. In the top row corresponding to $V_1$ and $B_1$, the complex eigenvalues lead to a visible rotation of the trajectories, and no clear alignment is observed. In the bottom row, however, $V_2B_2^\top$ is still non-symmetric but has real eigenvalues, and the same two-step behavior as in the symmetric case is recovered.
\newcounter{subgroup}\setcounter{subgroup}{1}
\renewcommand\thesubfigure{\alph{subfigure}\arabic{subgroup}}
\begin{figure}[h]
\centering
\begin{subfigure}{.28\textwidth}
\includegraphics[width=\linewidth,trim={2cm 2cm 2cm 2cm}, clip]{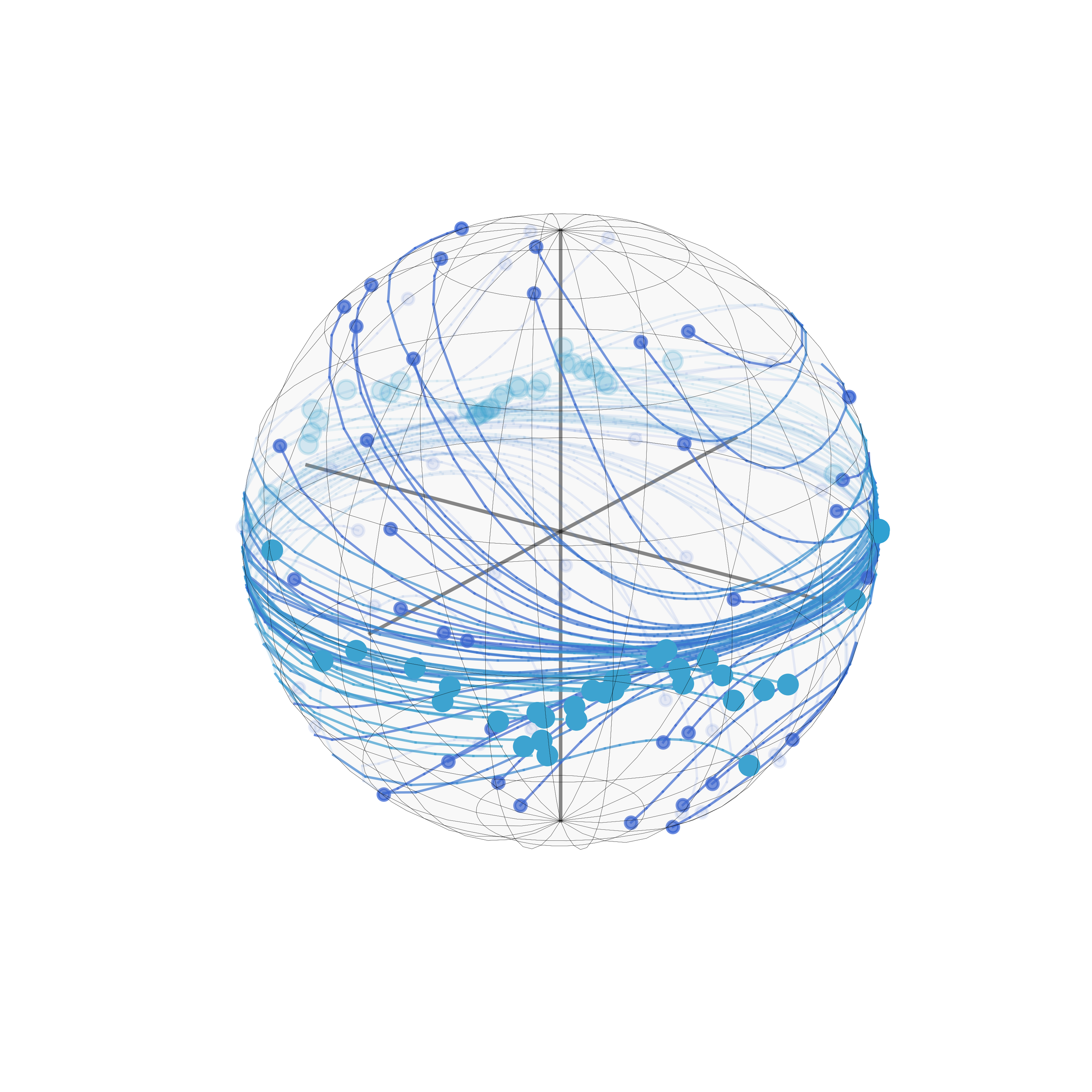}
\caption{$t\in [0,4)$}%
\end{subfigure}\hfill%
\begin{subfigure}{.28\textwidth}%
\includegraphics[width=\linewidth, trim={2cm 2cm 2cm 2cm}, clip]{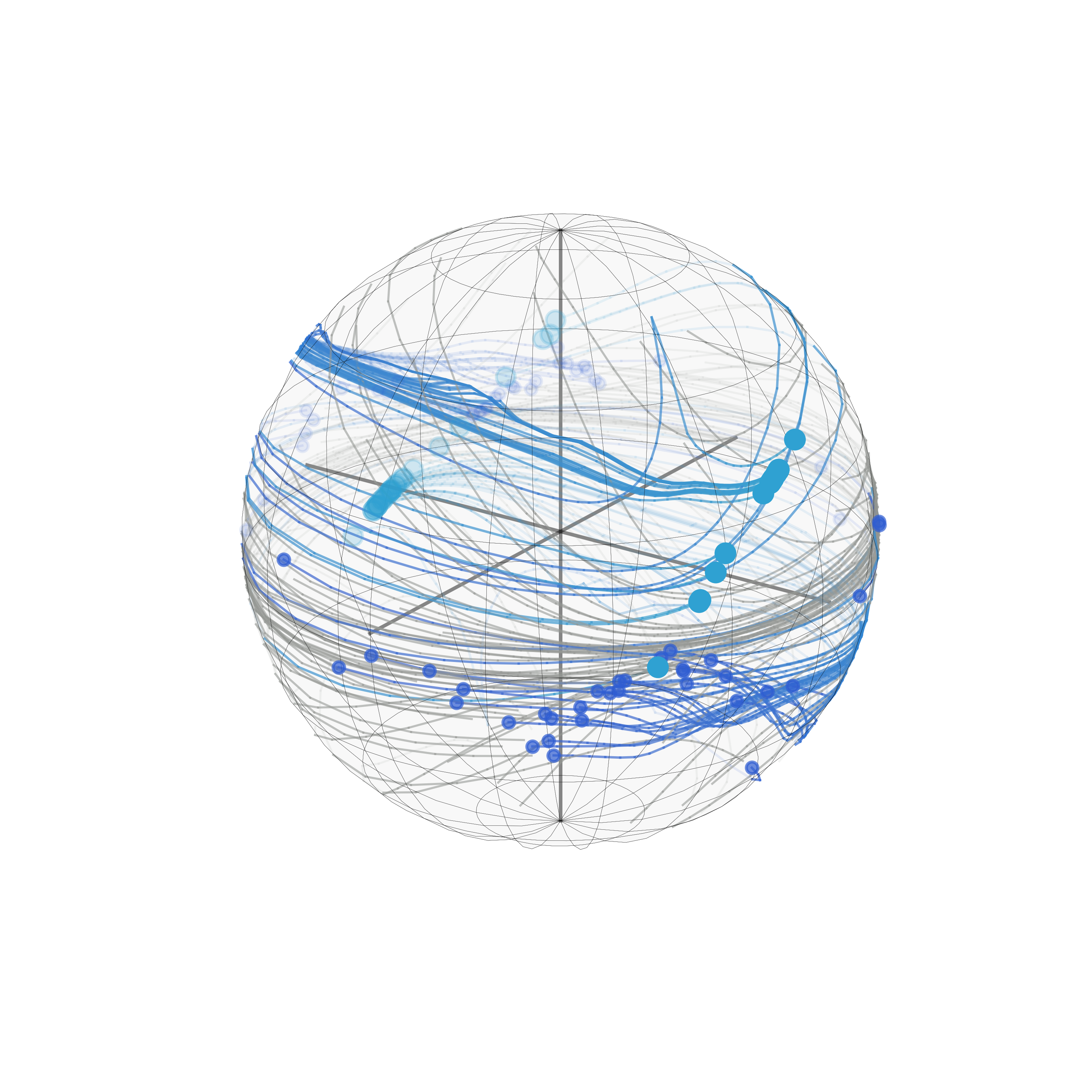}\hfill%
\caption{$t\in [4,9)$}%
\end{subfigure}\hfill%
\begin{subfigure}{.28\textwidth}%
\includegraphics[width=\linewidth, trim={2cm 2cm 2cm 2cm}, clip]{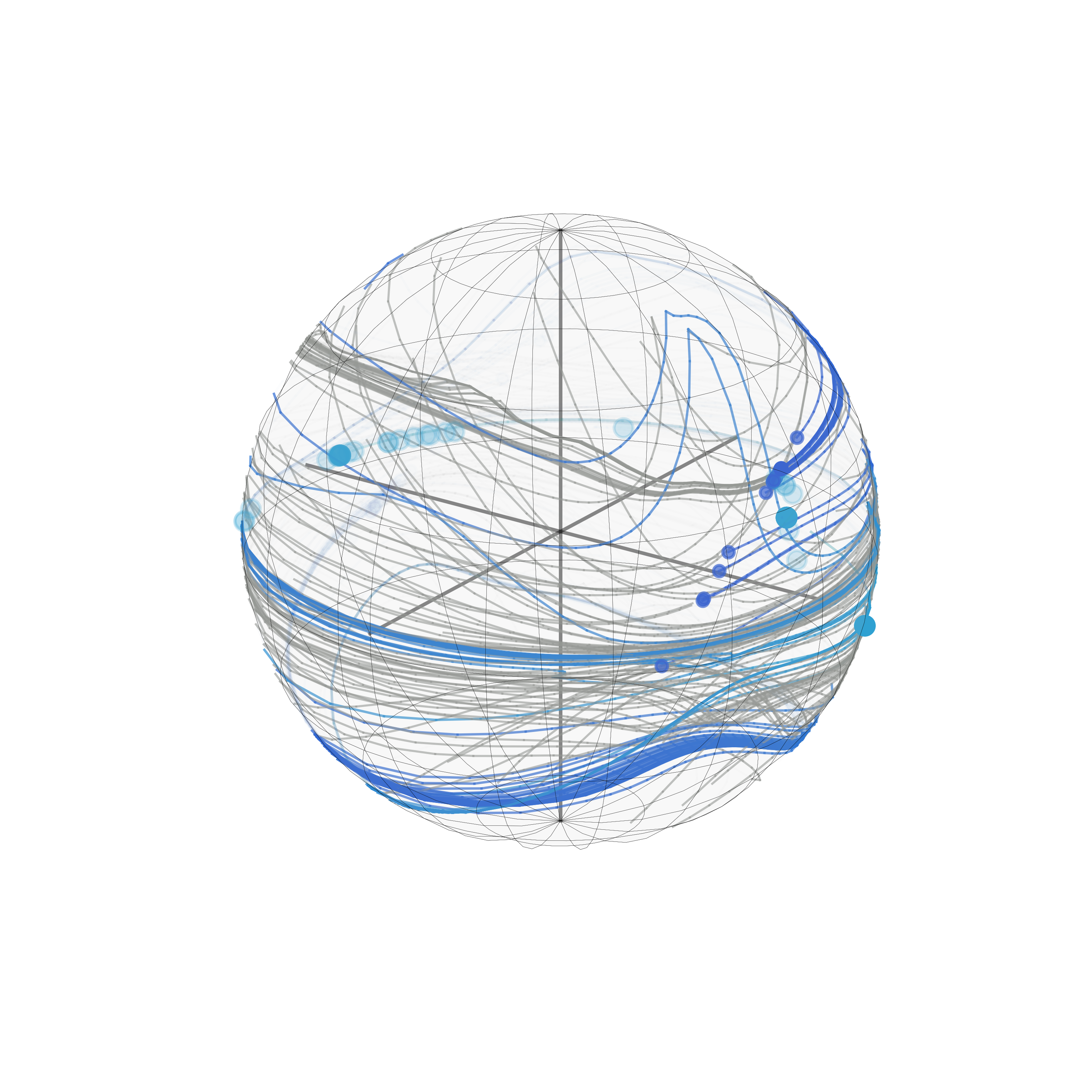}%
\caption{$t\in [9,20)$}%
\end{subfigure}%
\\
\stepcounter{subgroup}\setcounter{subfigure}{0}%
\begin{subfigure}{.28\textwidth}
\includegraphics[width=\linewidth,trim={2cm 2cm 2cm 2cm}, clip]{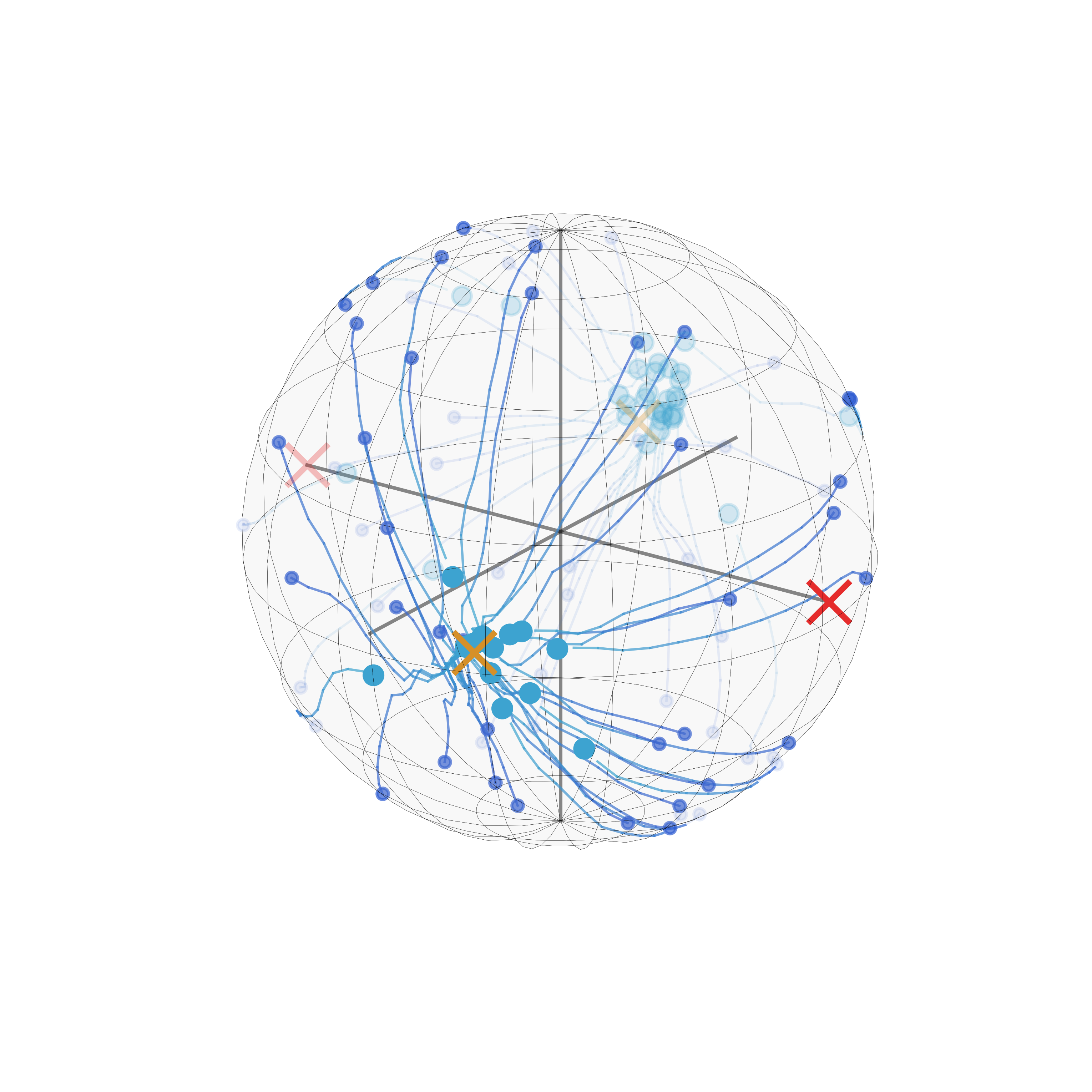}
\caption{$t\in [0,1.5)$}%
\end{subfigure}\hfill%
\begin{subfigure}{.28\textwidth}%
\includegraphics[width=\linewidth, trim={2cm 2cm 2cm 2cm}, clip]{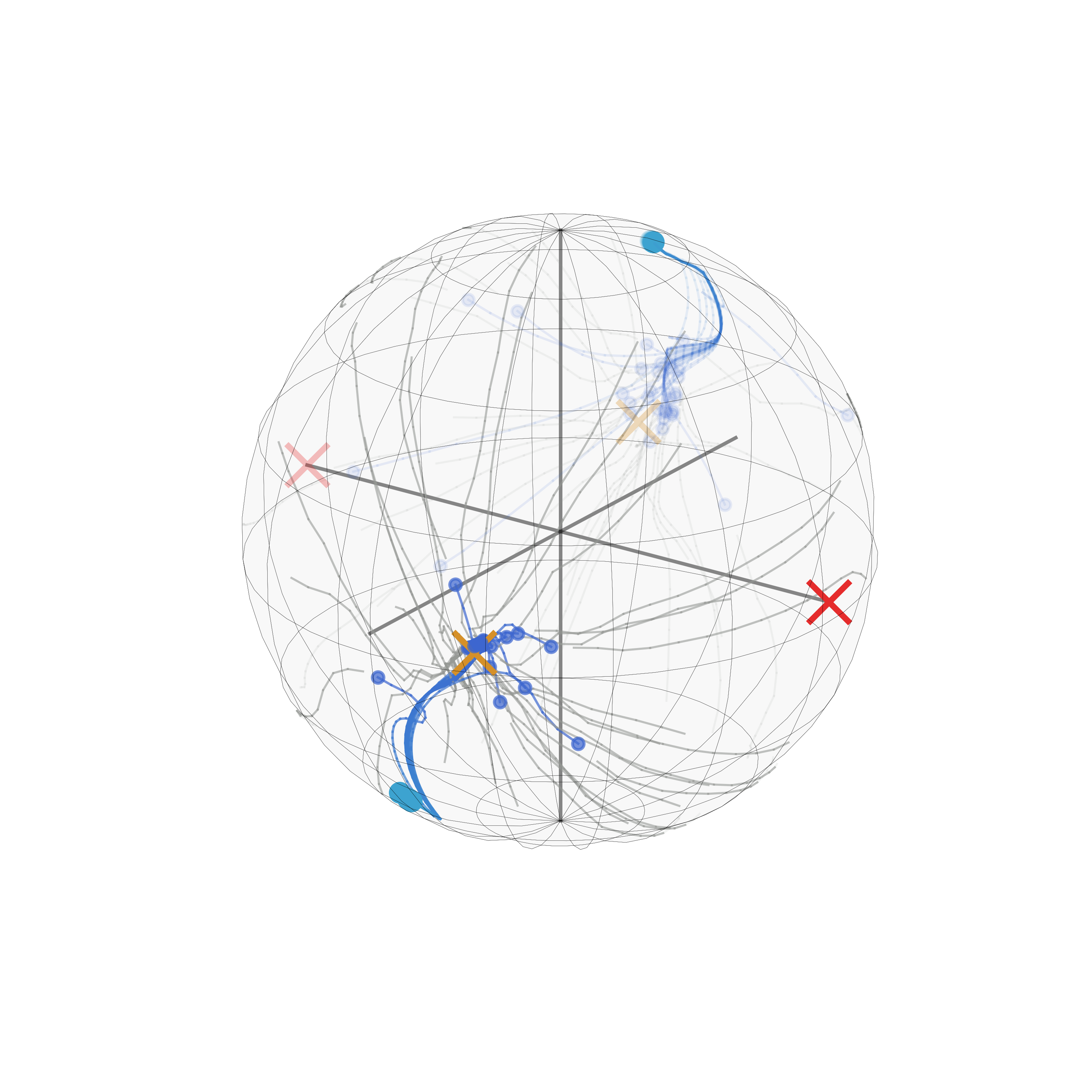}\hfill%
\caption{$t\in [1.5,6)$}%
\end{subfigure}\hfill%
\begin{subfigure}{.28\textwidth}%
\includegraphics[width=\linewidth, trim={2cm 2cm 2cm 2cm}, clip]{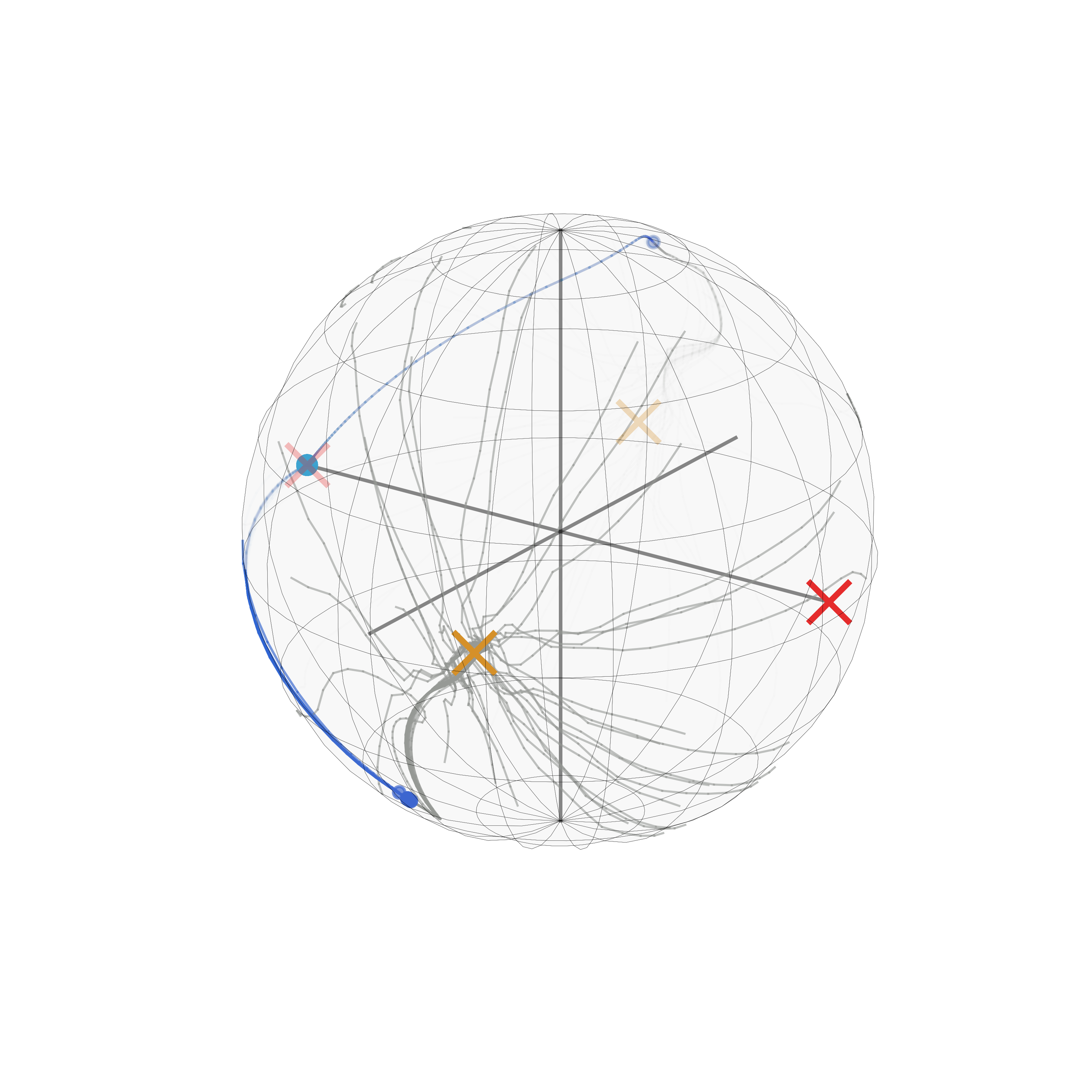}%
\caption{$t\in [6,20)$}%
\end{subfigure}%
\caption{Dynamics of tokens evolving according to \eqref{eq:SAdynamics} with $VB^\top$ not symmetric. 
In the top row, corresponding to the choice $V_1$, $B_1$ in \eqref{eq:nonsym}, we observe that trajectories rotate around the sphere and do not display clear convergence or clustering.
In the bottom row, corresponding to $V_2$, $B_2$ in \eqref{eq:nonsym}, we can see exactly the same behavior as in Figure \ref{fig:conj}: initial alignment with $E$, followed by a second phase of alignment with $F$.
}
\label{fig:finiteBeta1}
\end{figure}%
\renewcommand\thesubfigure{\alph{subfigure}}%

\subsection{Gradient flow case}\label{app:gradflow}

In this section, we study the alignment behavior in the gradient flow case, i.e., $B=\pm V$. In particular, as highlighted in \cite{burger2025analysis}, in this case the self-attention dynamics try to either maximize or minimize the energy
$$\mathcal{E}_B(\varrho) = \int_\sphere\int_\sphere e^{\ip{x}{By}}\de\varrho(x)\de\varrho(y).$$
These experiments complement our theory by testing the predicted alignment behavior in the more specific gradient flow setting, where the dynamics admit an additional variational interpretation. In the following, we perform numerical experiments for four cases: maximization and minimization, each with positive and negative definite matrices. Throughout, $v_{\max}(B)$ and $v_{\min}(B)$ denote a unit eigenvector of $B$ associated with its largest and smallest eigenvalues, respectively.

\paragraph{Maximization case.}

For $B=V$, the self-attention dynamics try to maximize the energy $\mathcal{E}_B$ and the optimal stationary state is a single Dirac at $\pm v_{\max}(B)$, i.e., $\rho^{\text{opt}}=\delta_{v_{\max}(B)}$ or $\rho^{\text{opt}}=\delta_{-v_{\max}(B)}$. However, the example in \cite[Figure 2]{burger2025analysis} suggests that, for large $\beta$, the stationary state supported fully on the dominant eigenspace of $B$ is more likely to occur. This is consistent with Theorem~\ref{thm:main}, since in this case $VB^\top = B^2$. We display this behavior in Figure~\ref{fig:gfmaxspd}, where 
the purple line displays the evolution of ${\mathcal{E}_B(\rho_t^{\beta,n})} / {\mathcal{E}_B(\rho^{\text{opt}})}$, i.e., the energy of the ensemble normalized by the theoretical best energy. For small values of $\beta$, the iteration indeed converges towards an energetically optimal state, which is a single Dirac. However, since the initial configuration is sampled uniformly on the sphere, $\Pi_\sharp\rho_0^{n}$ is likely clustered both at $v_{\max}(B)$ and $-v_{\max}(B)$, which causes the Wasserstein distance in Figure~\ref{fig:gfmaxspd} to be large. For larger values of $\beta$, one instead observes convergence towards energetically suboptimal states, which are closer to $\Pi_\sharp\rho_0^n$.

On the other hand, if $B$ is negative semi-definite, the optimal state is $\rho^{\text{opt}}=\frac{1}{2}(\delta_{v_{\min}(B)} + \delta_{-v_{\min}(B)})$, where in this case $v_{\min}(B) = v_{\max}(B^2)$. In Figure~\ref{fig:gfmaxsnd}, we observe that even for small values of $\beta$, we indeed obtain convergence of $\rho_t^{\beta,n}$ towards $\Pi_\sharp\rho_0^n$. Moreover, we also obtain convergence towards energetically optimal states for large values of $\beta$.

\begin{figure}
\centering
\includegraphics[height=1.5em,trim=0cm 0.2cm 0cm 0.2cm,clip]{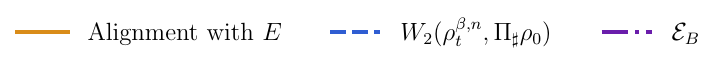}\\
\begin{subfigure}{.33\textwidth}
\includegraphics[width=\linewidth]{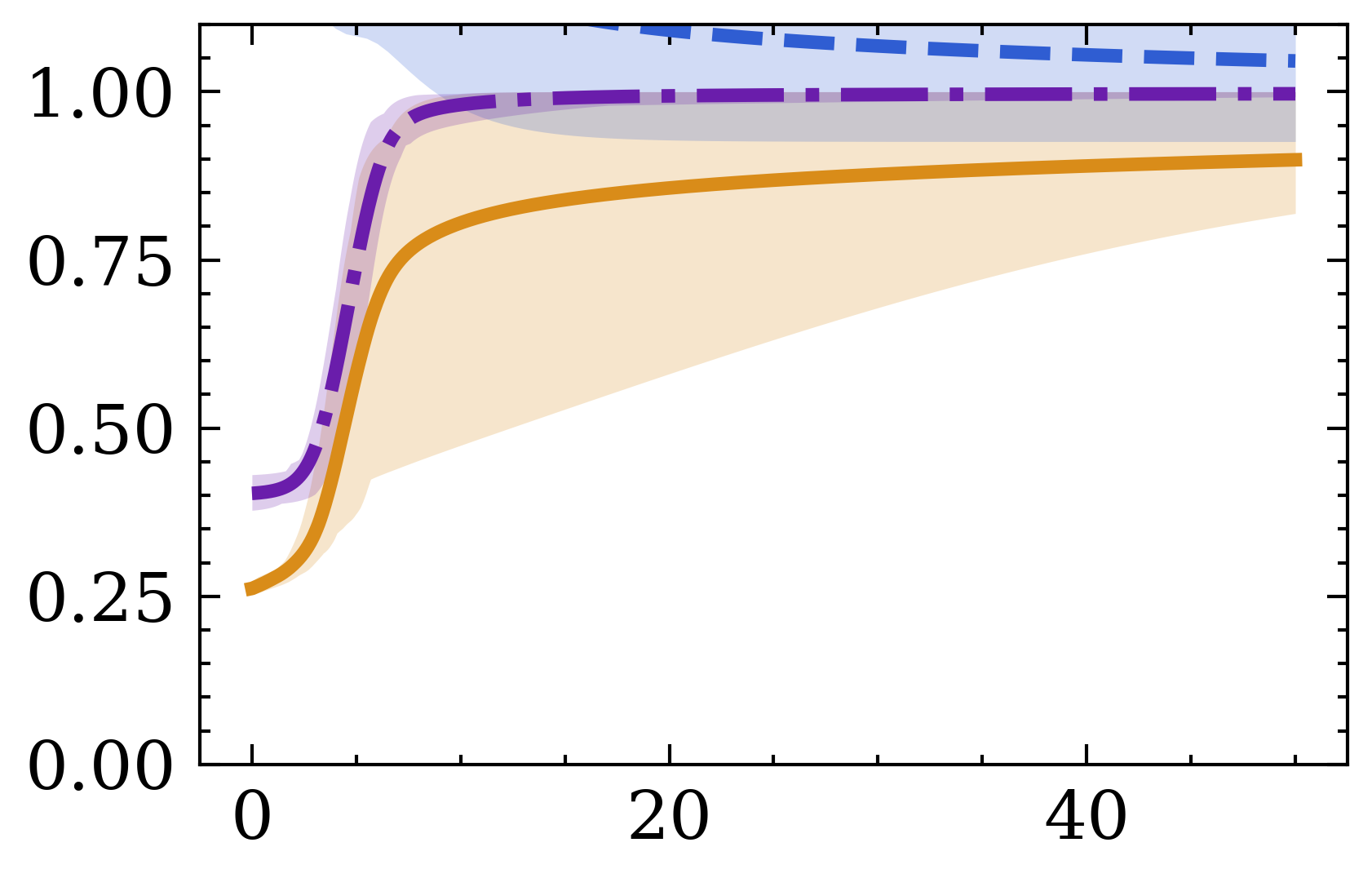}
\caption{$\beta=1$}
\end{subfigure}\hfill%
\begin{subfigure}{.33\textwidth}
\includegraphics[width=\linewidth]{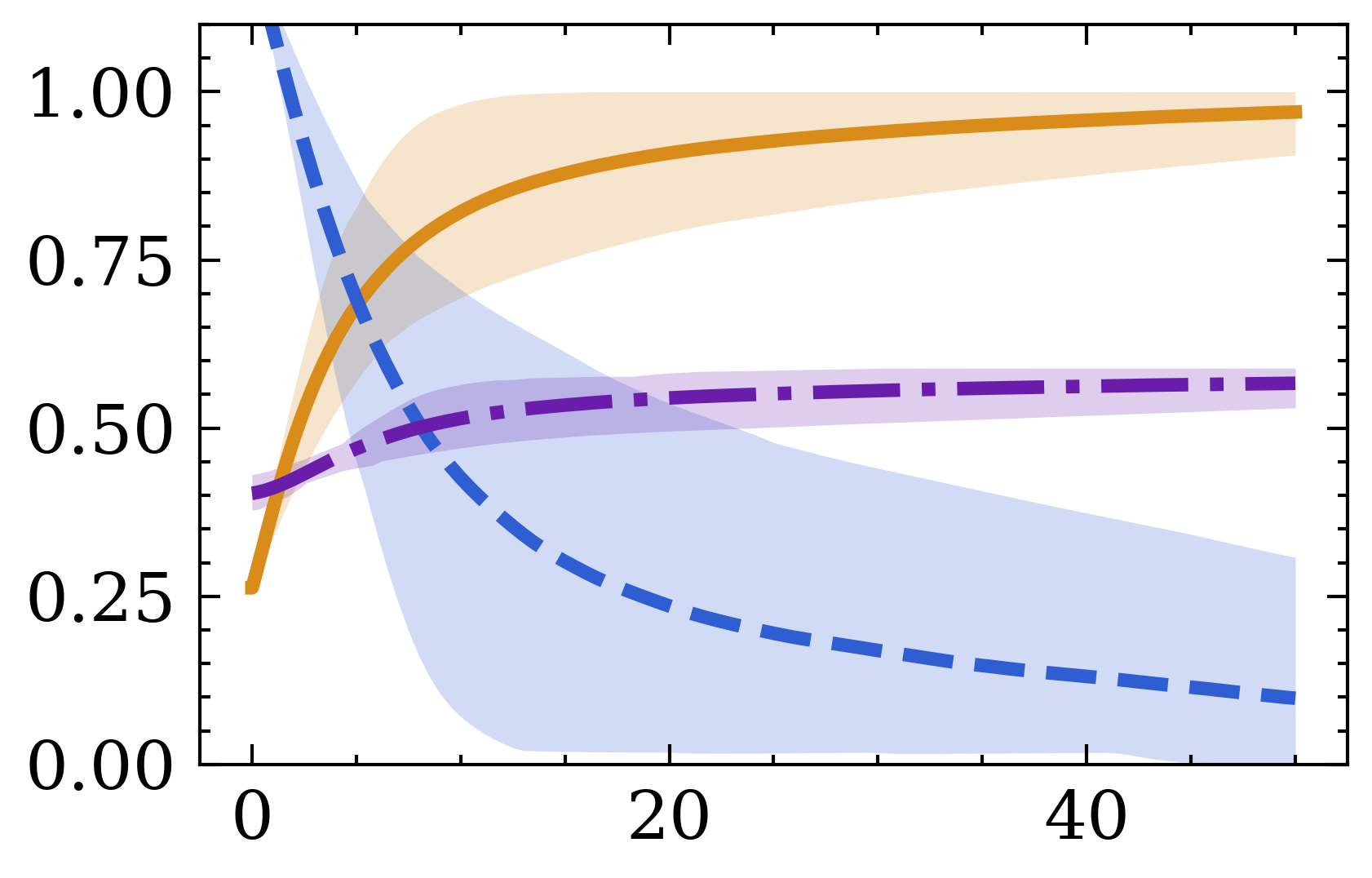}
\caption{$\beta=100$}%
\end{subfigure}\hfill%
\begin{subfigure}{.33\textwidth}
\includegraphics[width=\linewidth]{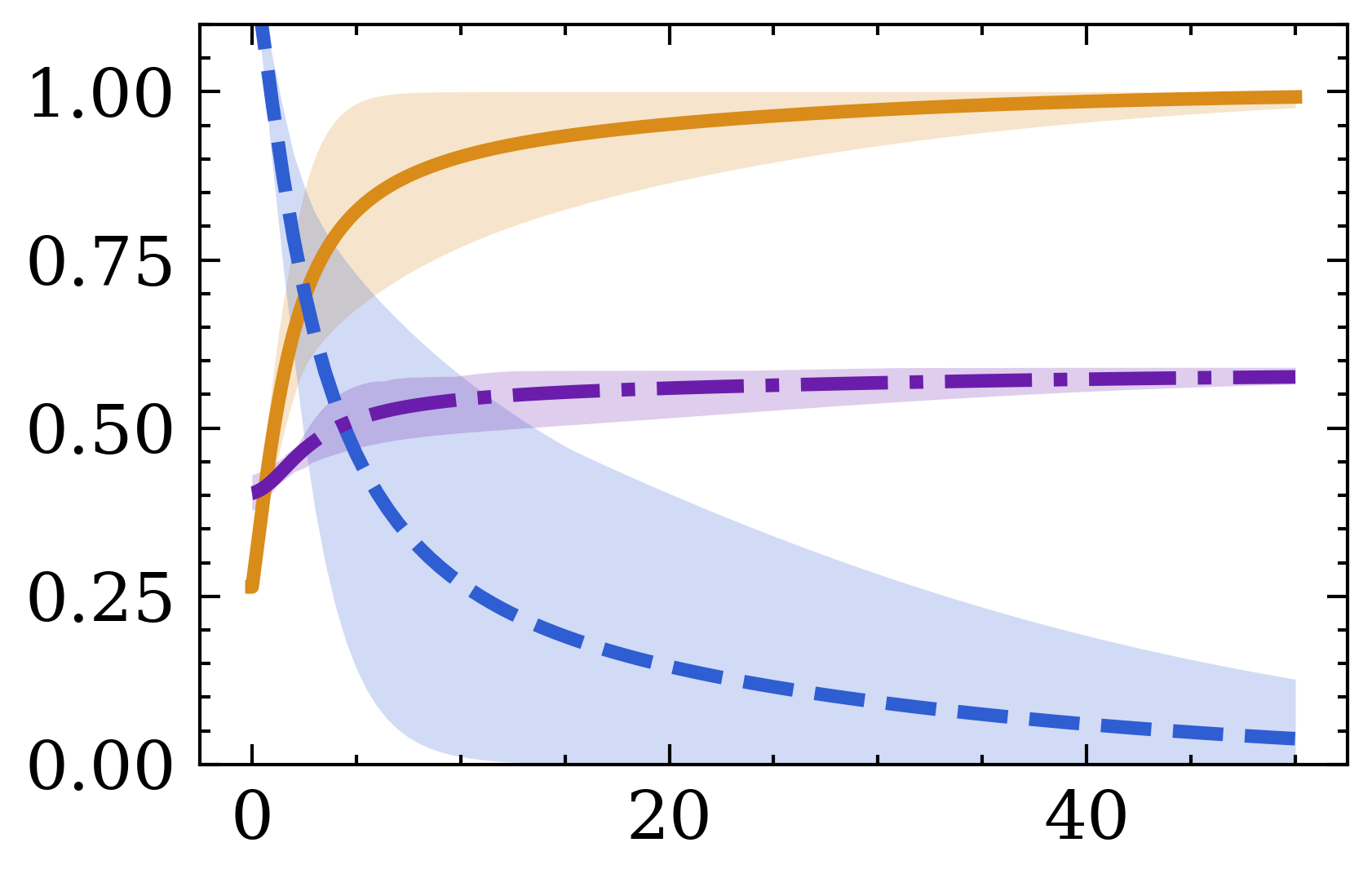}
\caption{$\beta=\infty$}
\end{subfigure}%
\caption{Alignment in the gradient flow maximization case, i.e., $B=V$, with $B$ being a symmetric positive definite matrix. We run the experiment for $10$ different random configurations. We consider $n=100$ particles initialized independently from $\rho_0$, the uniform distribution on $\sphere$.}
\label{fig:gfmaxspd}
\end{figure}

\begin{figure}
\centering
\includegraphics[height=1.5em,trim=0cm 0.2cm 0cm 0.2cm,clip]{figures/Fig67_legend.pdf}\\
\begin{subfigure}{.33\textwidth}
\includegraphics[width=\linewidth]{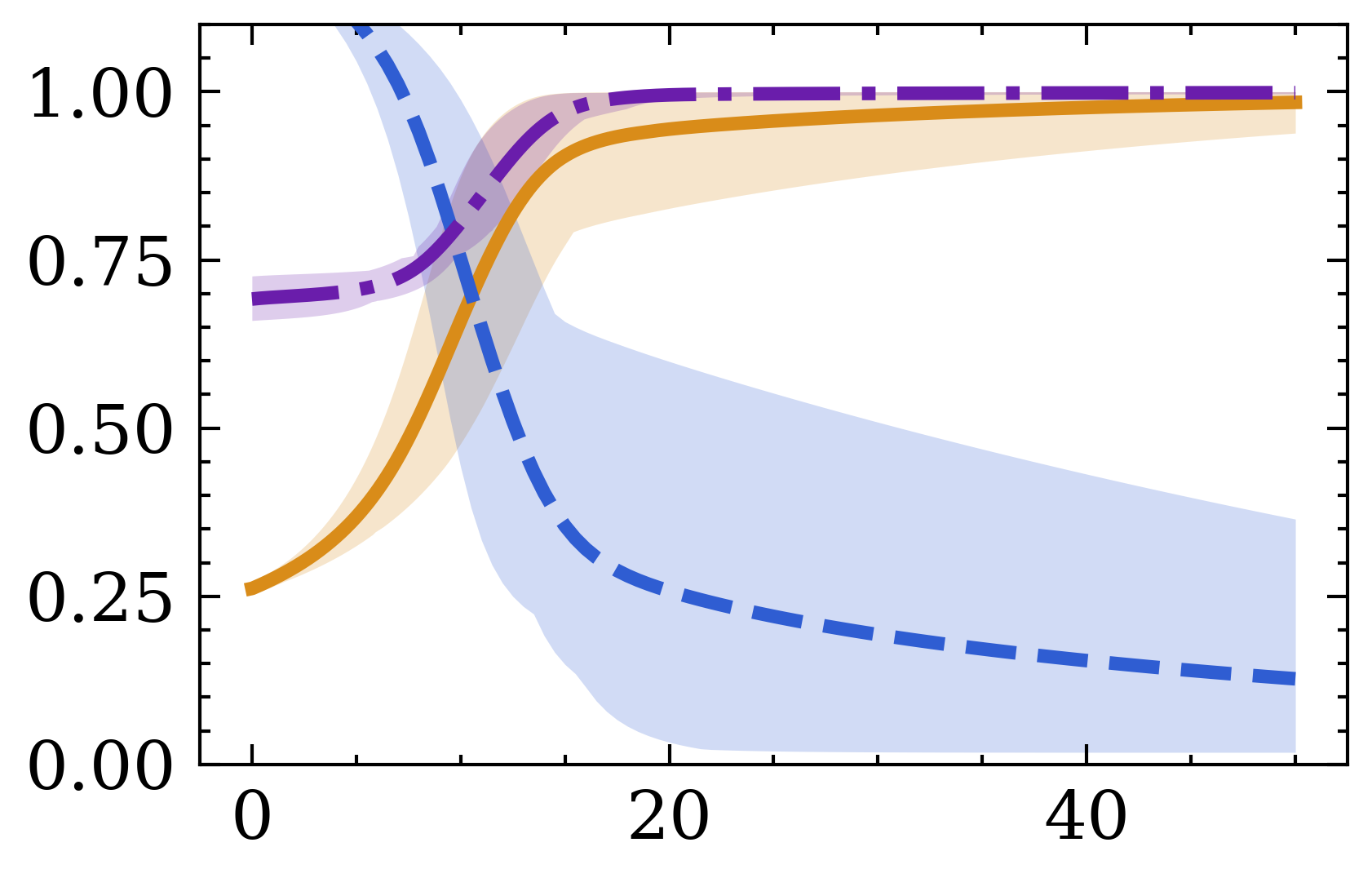}
\caption{$\beta=1$}
\end{subfigure}\hfill%
\begin{subfigure}{.33\textwidth}
\includegraphics[width=\linewidth]{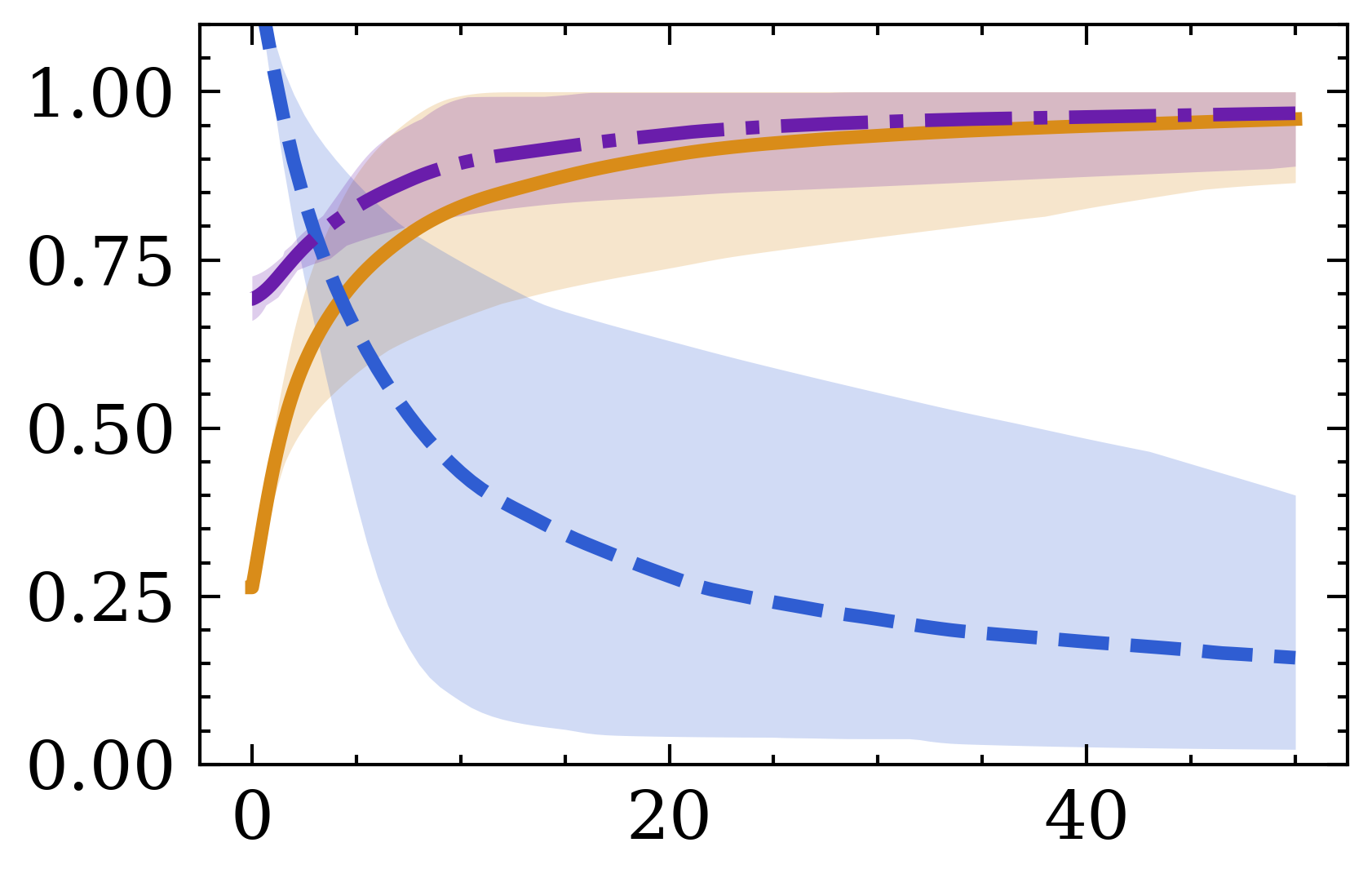}
\caption{$\beta=100$}%
\end{subfigure}\hfill%
\begin{subfigure}{.33\textwidth}
\includegraphics[width=\linewidth]{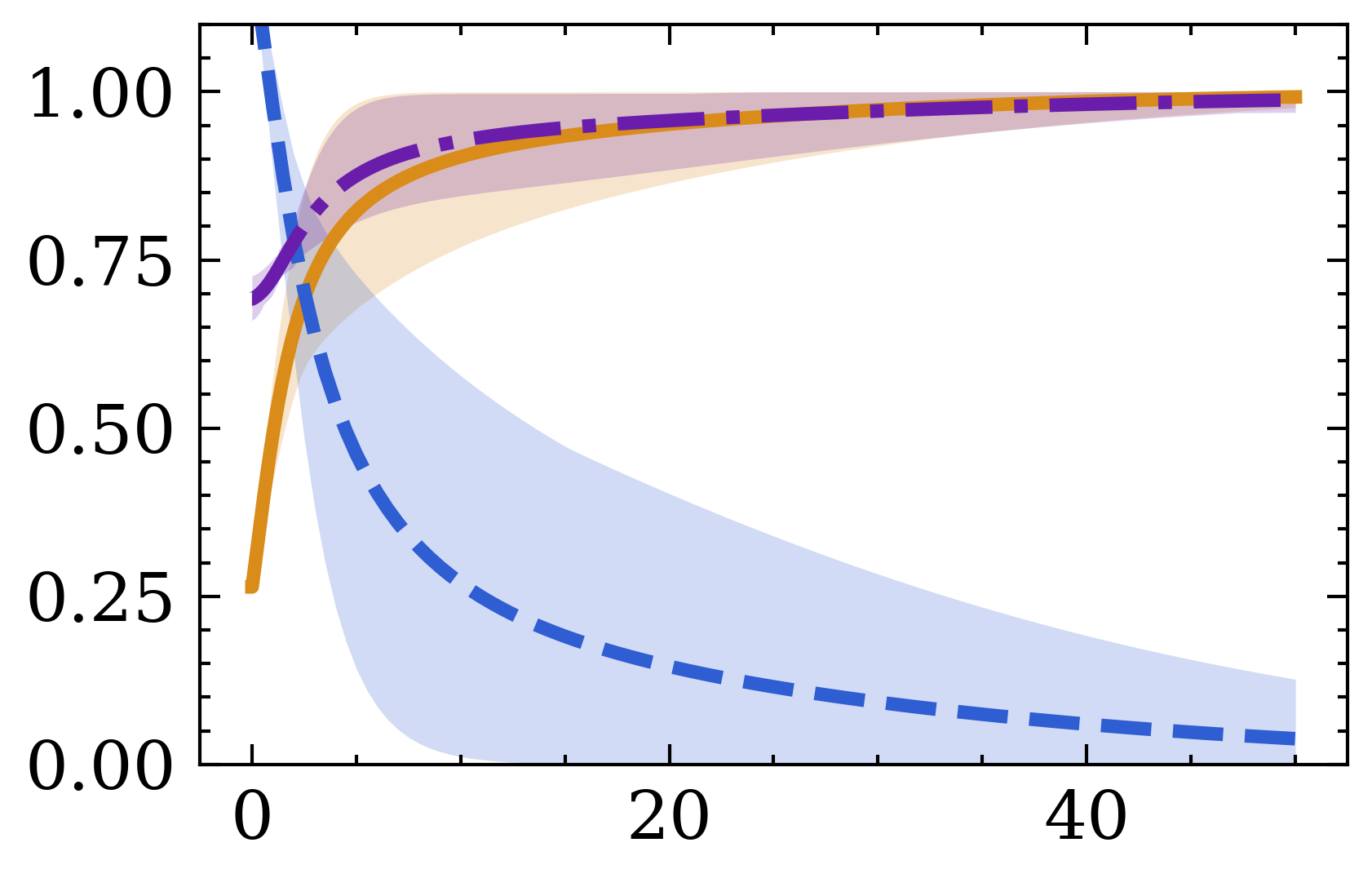}
\caption{$\beta=\infty$}
\end{subfigure}%
\caption{Alignment in the gradient flow maximization case, i.e., $B=V$, with $B$ being a symmetric negative definite matrix. The remaining setup is the same as in Figure~\ref{fig:gfmaxspd}.}
\label{fig:gfmaxsnd}
\end{figure}

\paragraph{Minimization case.} We now consider the case when $B=-V$, in which case the self-attention dynamics try to minimize the energy $\mathcal{E}_B$. Thus, the purple line in the following figures will now display $1 - {\mathcal{E}_B(\rho^{\text{opt}})} / {\mathcal{E}_B(\rho_t^{\beta,n})}$. We first consider the case where $B$ is negative definite. Then, the optimal state is given as $\rho^{\text{opt}}=\delta_{v_{\min}(B)}$ and here $v_{\min}(B)$ does not coincide with $v_{\max}(VB^\top)=v_{\max}(-B^2)$, but instead $v_{\max}(V) = v_{\max}(-B) = v_{\min}(B)$. This can be observed in Figure~\ref{fig:gfminsnd}, where for small $\beta$, we obtain convergence towards an energetically optimal state and alignment with $F$. For intermediate $\beta$, we obtain the effect highlighted in Figure~\ref{fig:conj}, where we first obtain a phase of alignment with $E$ and then align to $F$, which here is also a phase where $\mathcal{E}_B$ is minimized. For $\beta=\infty$, as expected, we only obtain alignment towards $E$ and the energy of the ensemble stays constant. Since $B=-V$, this case suggests that the second phase emerges from a disagreement between the dominant eigenspaces of $B$ and $V$.
\begin{figure}
\centering
\includegraphics[height=1.5em,trim=0cm 0.2cm 0cm 0.2cm,clip]{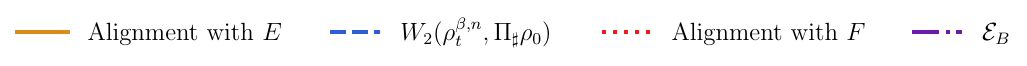}\\
\begin{subfigure}{.33\textwidth}
\includegraphics[width=\linewidth]{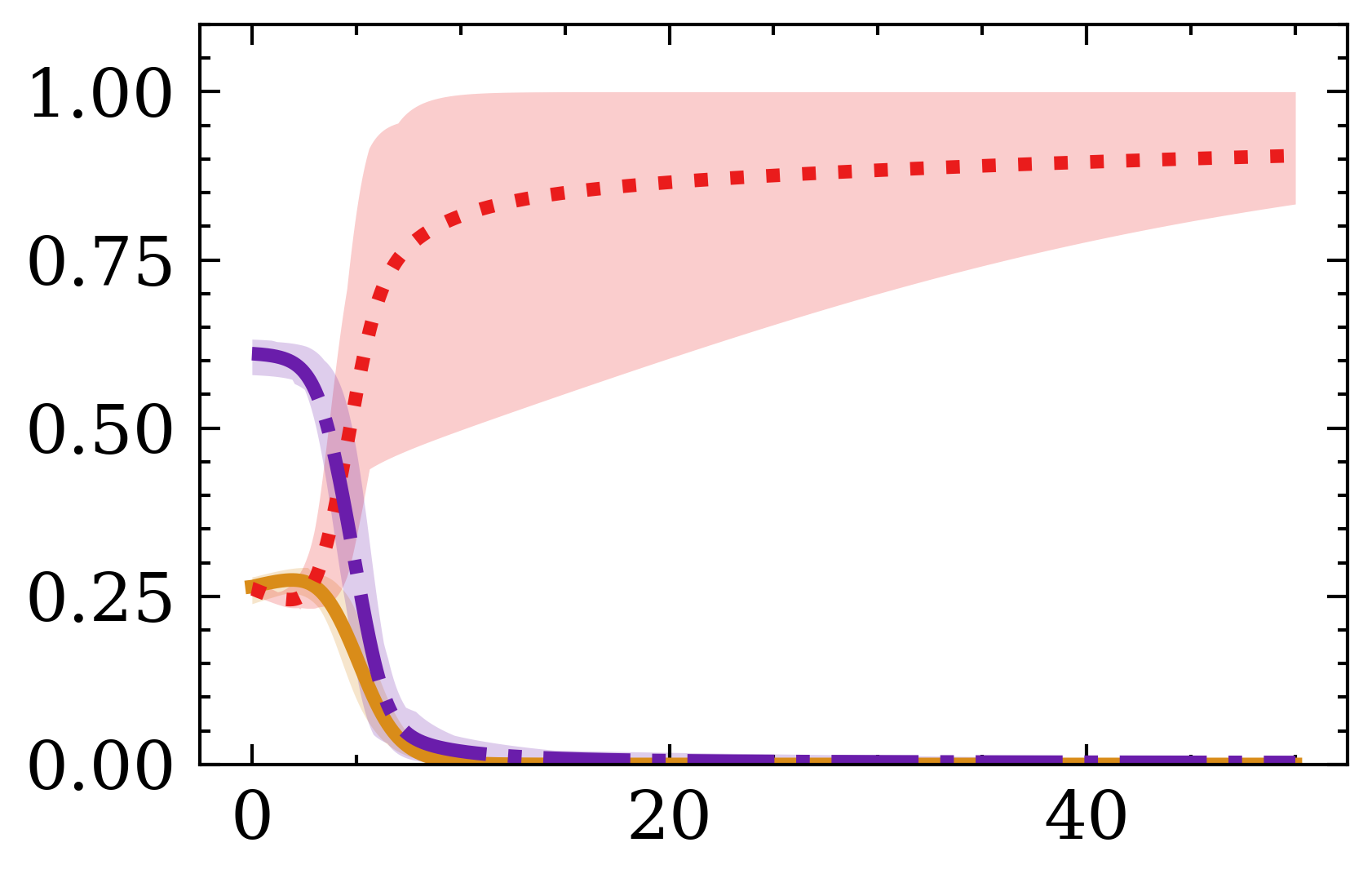}
\caption{$\beta=1$}
\end{subfigure}\hfill%
\begin{subfigure}{.33\textwidth}
\includegraphics[width=\linewidth]{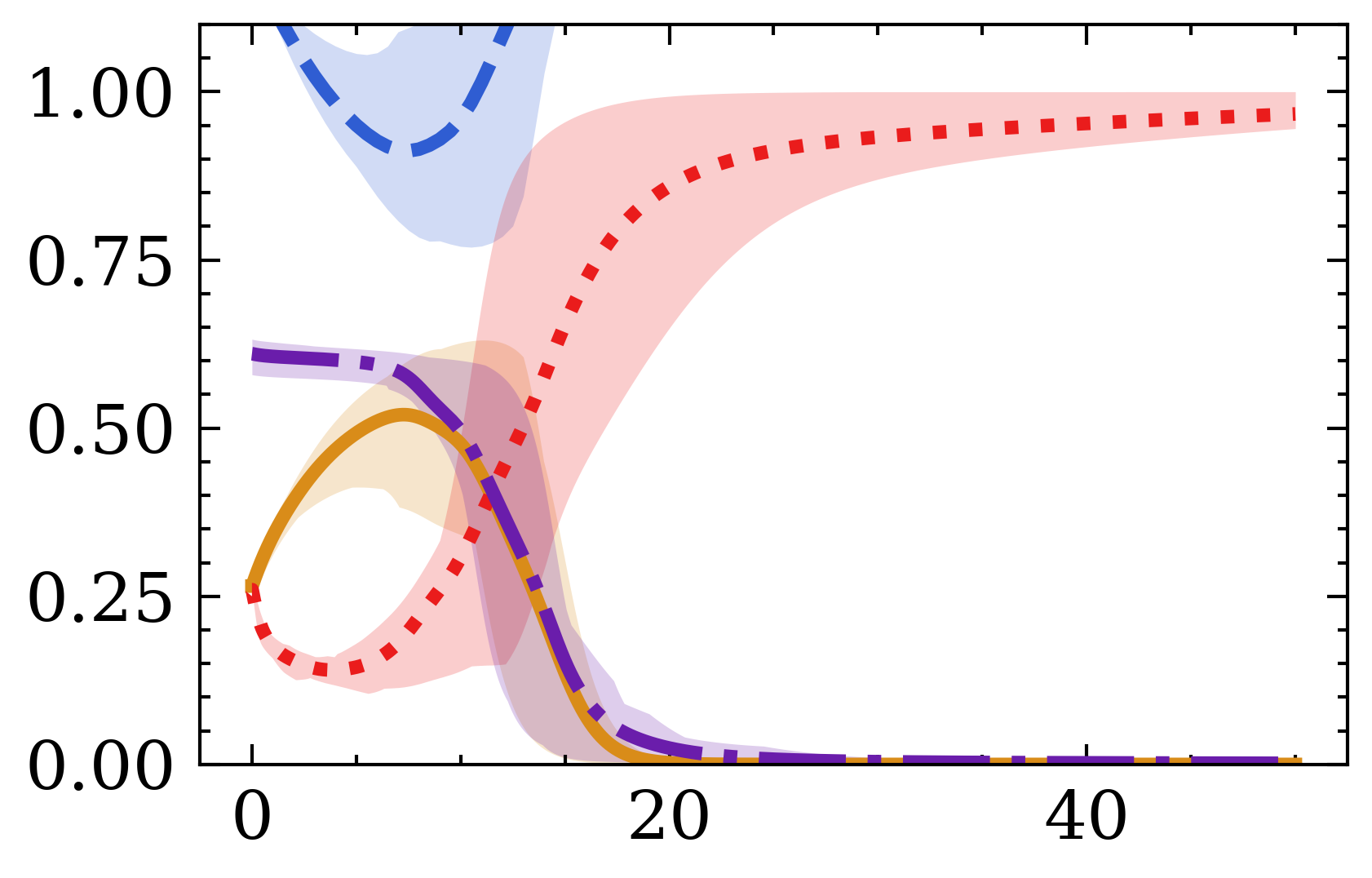}
\caption{$\beta=100$}%
\end{subfigure}\hfill%
\begin{subfigure}{.33\textwidth}
\includegraphics[width=\linewidth]{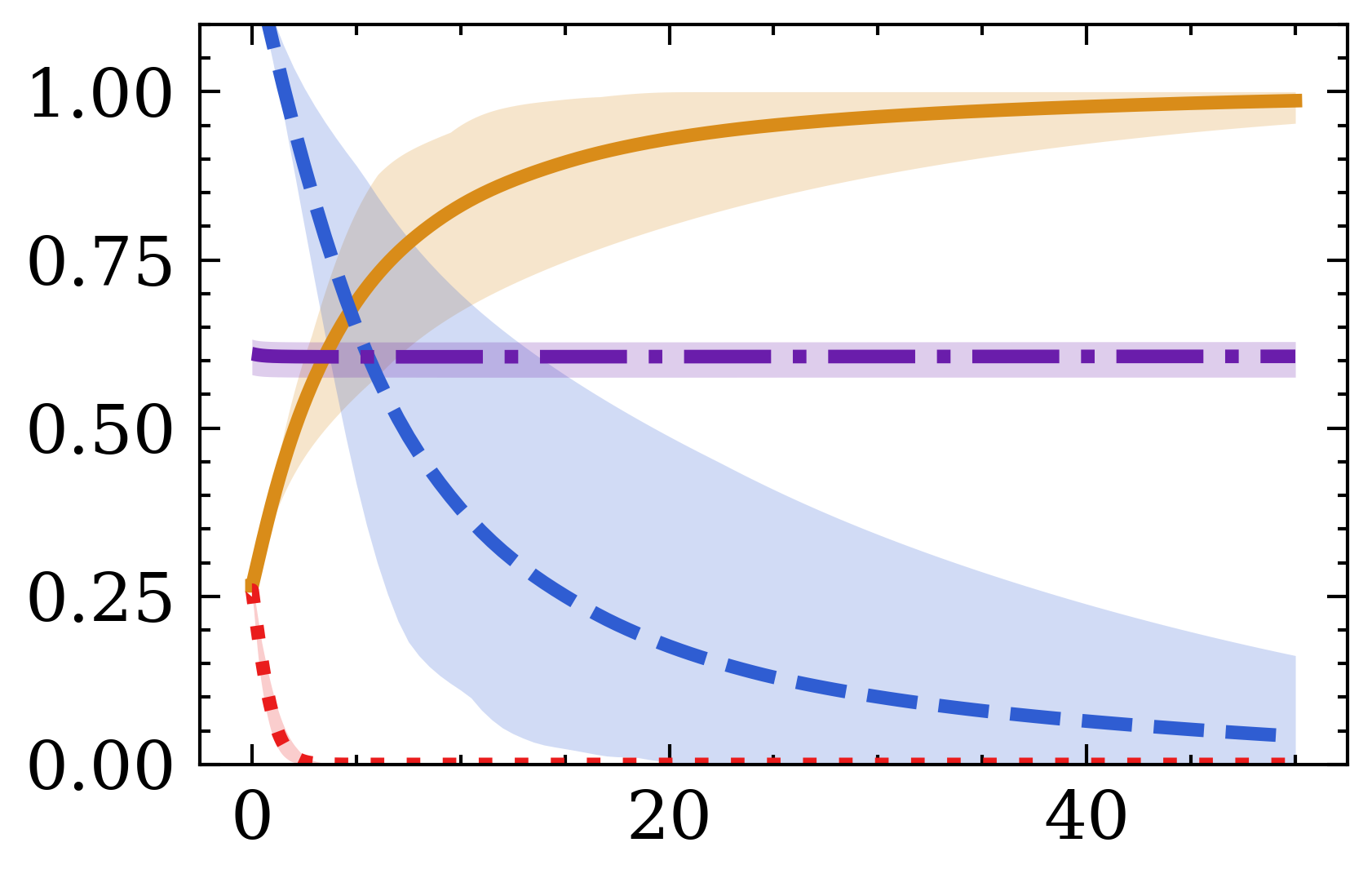}
\caption{$\beta=\infty$}
\end{subfigure}%
\caption{Alignment in the gradient flow minimization case, i.e., $B=-V$, with $B$ being a symmetric negative definite matrix. The remaining setup is the same as in Figure~\ref{fig:gfmaxspd}.}
\label{fig:gfminspd}
\end{figure}
Finally, we consider the minimization case with $B$ being symmetric positive definite. Here, the optimal states are not known analytically and we thus do not display the purple line in Figure~\ref{fig:gfminspd}. However, the experiments in \cite{burger2025analysis} suggest that the optimal state has two modes at $v_{\min}(B)$ and $-v_{\min}(B)$. These modes are sharper the closer the smallest eigenvalue of $B$ is to zero, and in fact the optimal state is $\frac{1}{2}(\delta_{v_{\min}(B)} + \delta_{-v_{\min}(B)})$ when it is exactly zero. Since in this case $v_{\min}(B) = v_{\max}(VB^\top)$, this observation is in line with Theorem~\ref{thm:main}. While in \cite{burger2025analysis} this effect was observed for $\lambda_{\min}(B)\to 0$, we similarly observe it in Figure~\ref{fig:gfminspd} for $\beta\to\infty$. We see that only for larger values of $\beta$ the alignment and Wasserstein convergence can be observed.
\begin{figure}
\centering
\includegraphics[height=1.5em,trim=0cm 0.2cm 0cm 0.2cm,clip]{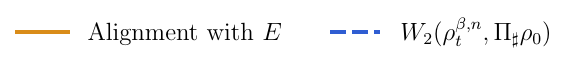}\\
\begin{subfigure}{.33\textwidth}
\includegraphics[width=\linewidth]{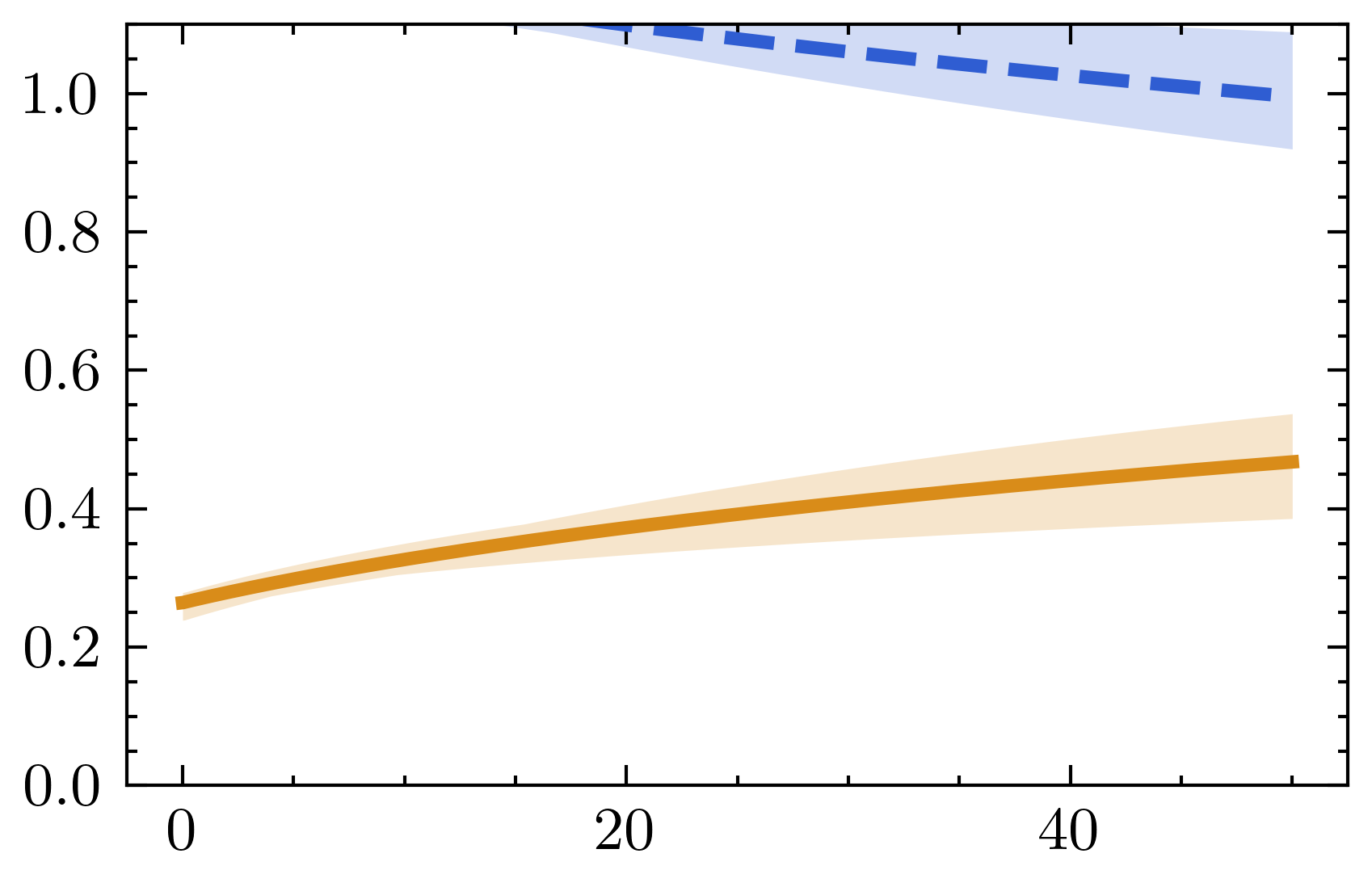}
\caption{$\beta=1$}
\end{subfigure}\hfill%
\begin{subfigure}{.33\textwidth}
\includegraphics[width=\linewidth]{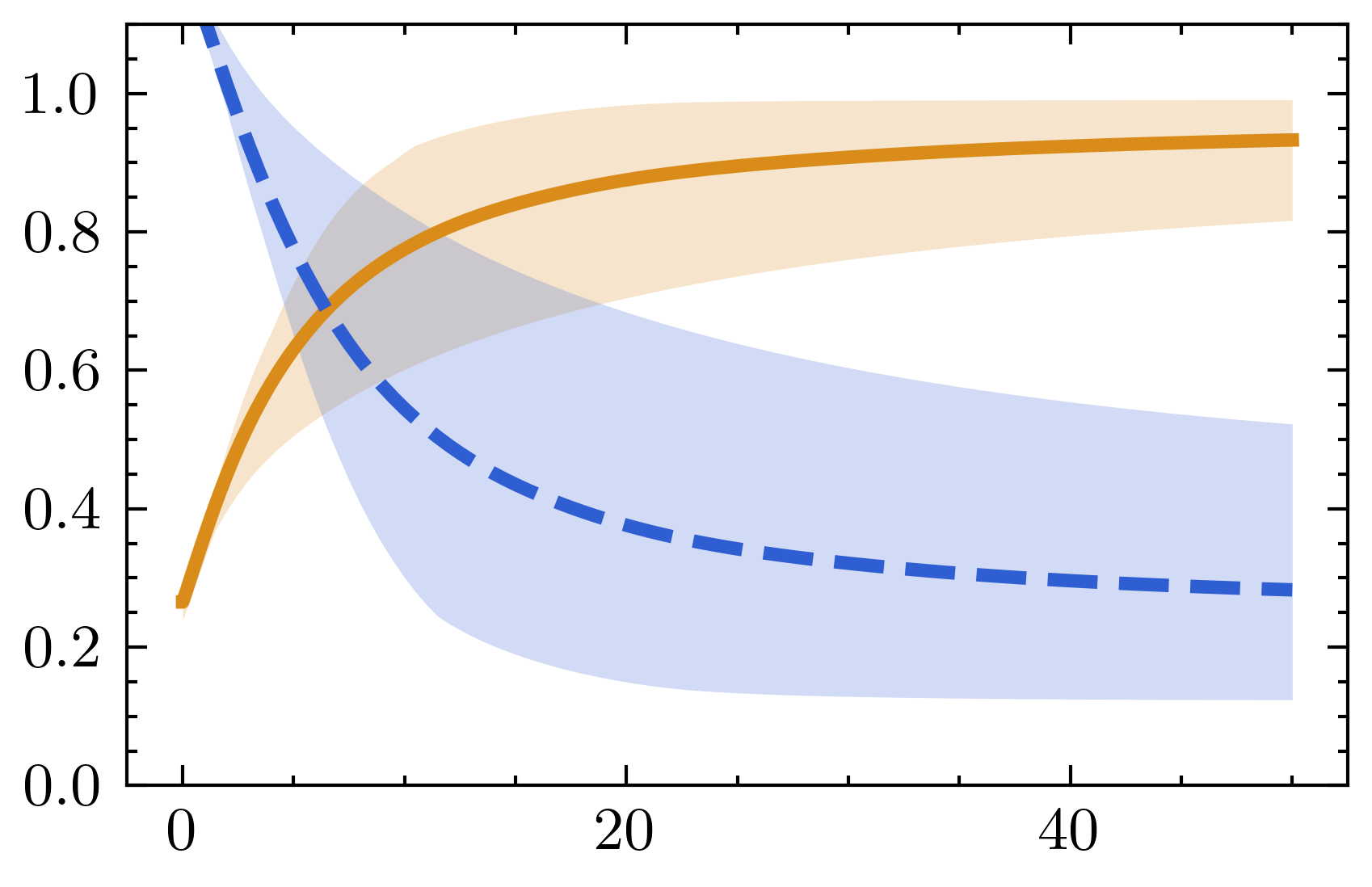}
\caption{$\beta=100$}%
\end{subfigure}\hfill%
\begin{subfigure}{.33\textwidth}
\includegraphics[width=\linewidth]{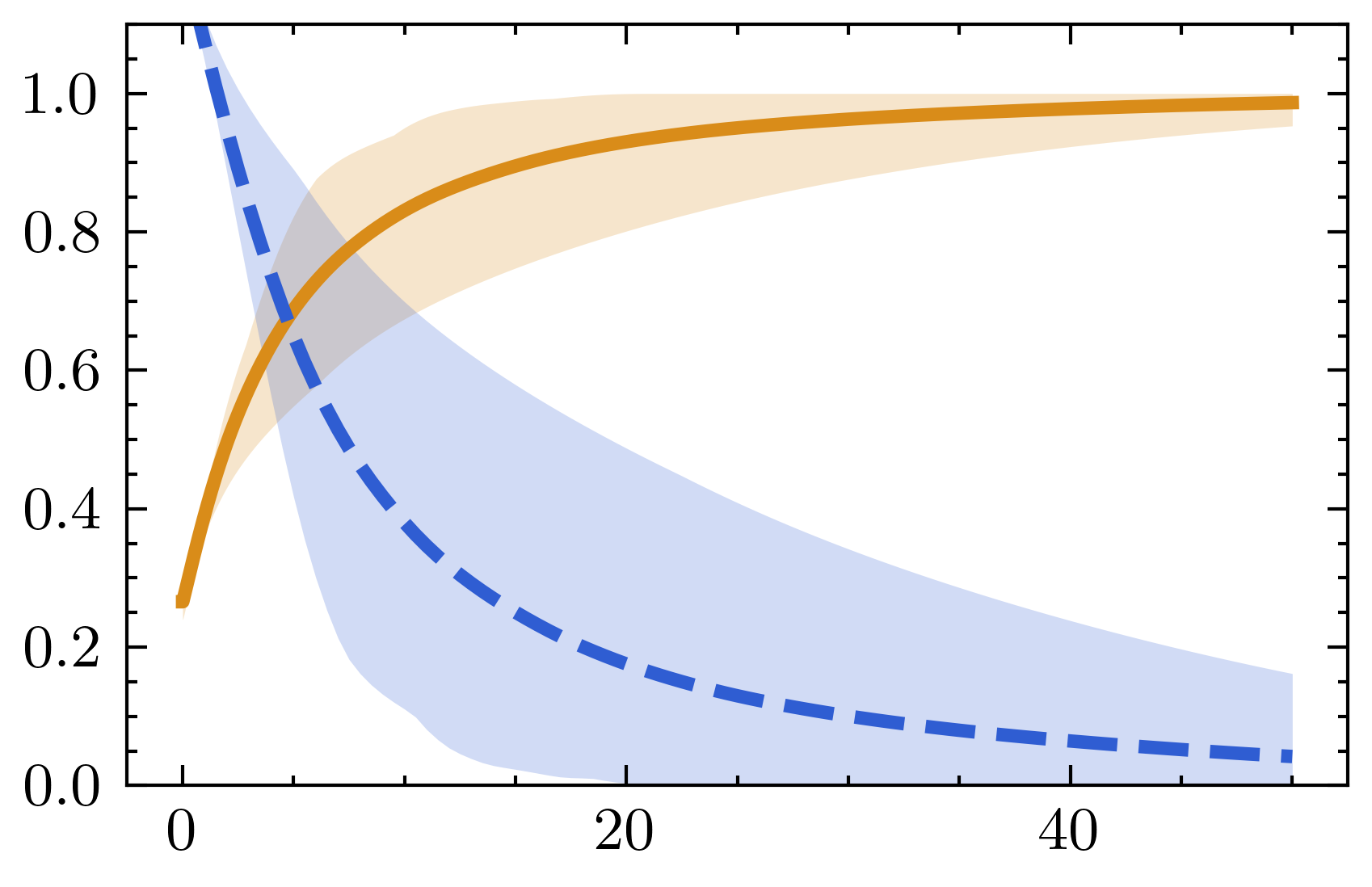}
\caption{$\beta=\infty$}
\end{subfigure}%
\caption{Alignment in the gradient flow minimization case, i.e., $B=-V$, with $B$ being a symmetric positive definite matrix. The remaining setup is the same as in Figure~\ref{fig:gfmaxspd}.}
\label{fig:gfminsnd}
\end{figure}

\subsection{Experiments supporting Conjecture~\ref{conj:v_max(V)}}\label{app:expSupportConj}

We provide additional numerical evidence for Conjecture~\ref{conj:v_max(V)}. For generic token configurations, we conjecture that the long-time behavior of \eqref{eq:SAdynamics} is governed by a dominant eigenspace of $V$: either $F$, associated with the largest eigenvalue of $V$, or $F^{\mathrm{abs}}$, associated with the eigenvalue of largest magnitude. The simulations below illustrate both possibilities.

\begin{figure}[h]%
\centering
\includegraphics[width=0.85\textwidth,trim=0cm 0.2cm 0cm 0.2cm,clip]
{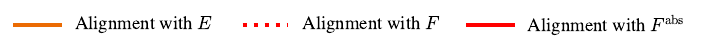}

\begin{subfigure}{.33\textwidth}
\includegraphics[width=\textwidth]{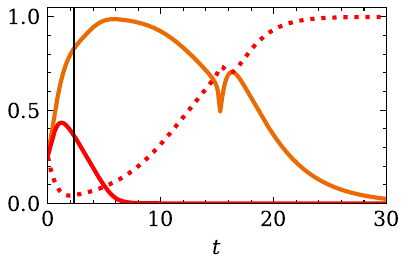}
\caption{$\beta=10$}
\end{subfigure}%
\begin{subfigure}{.33\textwidth}%
\includegraphics[width=\textwidth]{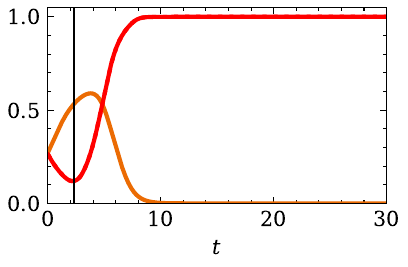}
\caption{$\beta=10$}
\end{subfigure}%
\begin{subfigure}{.33\textwidth}%
\includegraphics[width=\textwidth]{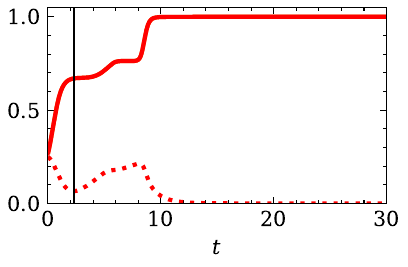}
\caption{$\beta=10$}
\end{subfigure} \\\vspace{1em}
\begin{subfigure}{.33\textwidth}
\includegraphics[width=\textwidth]{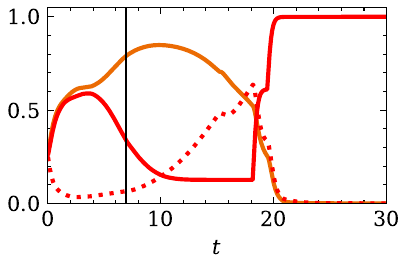}
\caption{$\beta=10^3$}
\end{subfigure}\hfill%
\begin{subfigure}{.33\textwidth}%
\includegraphics[width=\textwidth]{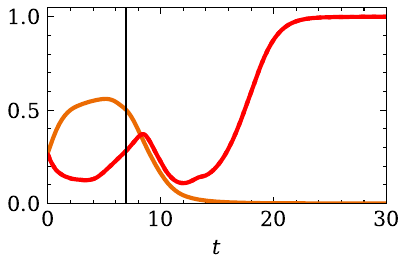}
\caption{$\beta=10^3$}
\end{subfigure}\hfill%
\begin{subfigure}{.33\textwidth}%
\includegraphics[width=\textwidth]{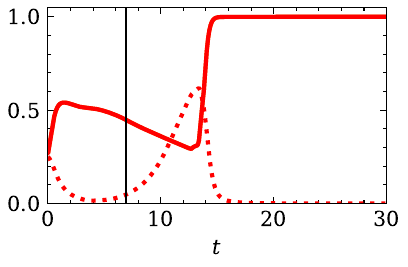}
\caption{$\beta=10^3$}
\end{subfigure}%
\caption{%
\label{fig:alignment_VBcases}%
Alignment of $\rho_t^{\beta,n}$ with the dominant eigenspace $E$ of $VB^\top$ and with the eigenspaces $F$ and $F^{\mathrm{abs}}$ of $V$, for $d=10$ (maximum alignment equals $1$). The vertical line marks $t=\log\beta$. Each column uses a different pair of random diagonal matrices $V$ and $B$, with independent normally distributed diagonal entries. The initial condition consists of $n=500$ tokens sampled from the uniform distribution $\rho_0$ on $\sphere$.}
\end{figure}
Figure~\ref{fig:alignment_VBcases} shows three representative random diagonal instances of $V$ and $B$. For $\beta=10$, panel~(a) exhibits asymptotic alignment with $F$. In panel~(b), $F=F^{\mathrm{abs}}$, and the dynamics align with this common eigenspace. In panel~(c), $E=F^{\mathrm{abs}}$, and the limiting alignment is with this eigenspace.

For $\beta=10^3$, panels~(e) and~(f) show the same qualitative behavior as panels~(b) and~(c), respectively, up to the expected delay on the time scale $t\approx \log\beta$. In panel~(d), however, the limiting alignment changes from $F$ to $F^{\mathrm{abs}}$. These simulations therefore support the prediction that the dynamics align with a dominant eigenspace of $V$, while indicating that the selected eigenspace can be either $F$ or $F^{\mathrm{abs}}$, depending on the matrix realization, the initial configuration, and the temperature regime.

\end{document}